\title{Almost positive links are strongly quasipositive}
\author{Peter Feller}
\address{ETH Zurich, R\"amistrasse 101, 8092 Zurich, Switzerland}
\email{\myemail{peter.feller@math.ch}}
\urladdr{\url{https://people.math.ethz.ch/~pfeller/}}
\author{Lukas Lewark}
\address{University of Bern, Mathematical Institute, Alpeneggstr. 22, 3012 Bern, Switzerland}
\email{\myemail{lukas@lewark.de}}
\urladdr{\url{http://www.lewark.de/lukas/}}
\keywords{Quasipositive links, almost positive links, canonical Seifert surfaces, slice-torus invariants}
\subjclass[2010]{57M25}
\author{Andrew Lobb}
\address{Mathematical Sciences, Durham University, UK}
\email{\myemail{andrew.lobb@durham.ac.uk}}
\urladdr{\url{http://www.maths.dur.ac.uk/users/andrew.lobb/}}
\let\cref\Cref
\crefname{subsection}{subsection}{subsections}
\Crefname{subsection}{Subsection}{Subsections}
\Crefname{enumi}{}{}
\definecolor{darkblue}{RGB}{0,0,96}
\definecolor{gray}{RGB}{127,127,127}
\definecolor{darkred}{RGB}{160,0,0}
\definecolor{lightyellow}{RGB}{255,255,128}
\newcommand{\myemail}[1]{\href{mailto:#1}{#1}}
\newcommand{\qua}{\hskip 0.4em \ignorespaces}
\def\arxiv#1{\relax\ifhmode\unskip\qua\fi
\href{http://arxiv.org/abs/#1}%
{\tt arXiv:\penalty -100\unskip#1}}
\def\MR#1{\relax\ifhmode\unskip\qua\fi
\href{http://www.ams.org/mathscinet-getitem?mr=#1}{\tt MR#1}}
\def\xox#1{\csname xx#1\endcsname}
\newtheorem{theoremA}{Theorem}
\newtheorem{corollaryA}[theoremA]{Corollary}
\declaretheorem[numberwithin=section]{lemma}
\newtheorem{proposition}[lemma]{Proposition}
\Crefname{theoremA}{Theorem}{Theorem}
\Crefname{corollaryA}{Corollary}{Corollary}
\theoremstyle{definition}
\newtheorem*{examp}{Example}
\newtheorem*{rmk}{Remark}
\newtheorem{definition}[lemma]{Definition}
\newtheorem*{Def}{Definition}
\newtheorem*{construction}{Construction}
\newtheorem{remark}[lemma]{Remark}
\DeclareMathAlphabet{\mathpzc}{OT1}{pzc}{m}{it}
\newcommand{\Z}{\mathbb{Z}}
\newcommand{\R}{\mathbb{R}}
\begin{document}
\thispagestyle{empty}
\subjclass[2010]{57M25}
\begin{abstract}
We prove that any link admitting a diagram with a single negative crossing is strongly quasipositive.
This answers a question of Stoimenow's in the (strong) positive.  As a second main result, we give simple and complete characterizations of link diagrams with quasipositive canonical surface (the surface produced by Seifert's algorithm). As applications, we determine which prime knots up to 13 crossings are strongly quasipositive, and we confirm the following conjecture for knots that have a canonical surface realizing their genus:
a knot is strongly quasipositive if and only if the Bennequin inequality is an equality.

\end{abstract}
\maketitle
\section*{Introduction}
Notions of quasipositivity for links and surfaces were introduced and explored by Rudolph in a series of papers
(cited in the text).
Their study is motivated, for example, by connections with complex algebraic plane curves~\cite{rud,MR1836094} and relationships to contact geometry~\cite{MR2562830,MR2646650}.

Quasipositive links, strongly quasipositive links, and quasipositive Seifert surfaces are usually defined in terms of braids.
In this paper, however, our focus lies more on geometry and less on braids, and so we omit the original
definitions in favor of the following characterizations:
a Seifert surface is called \emph{quasipositive} if it
is an incompressible subsurface  of the fiber surface of a positive torus link
(\emph{incompressible} meaning that the map induced by inclusion on the fundamental group is injective)
and \emph{strongly quasipositive links} are precisely those links that arise as the boundary of a quasipositive Seifert surface.
That these characterizations are equivalent to the original definitions is due to Rudolph~\cite{qpossubsurfaces}.
We are not concerned with (non-strongly) quasipositive links in this text.

Quasipositive Seifert surfaces are of maximal Euler characteristic; not just among Seifert surfaces of the given link, but even
among smooth slice surfaces~\cite{thom,rudolph}.

\subsection*{Main results}
Links that admit a positive diagram, in other words a diagram without negative crossings, are known as \emph{positive links}.  Positive links are strongly quasipositive \cite{posisqpos,MR1772843}.
Our first main result generalizes this to \emph{almost positive links}---links admitting an almost positive diagram, in other words a diagram with a single negative crossing.
This gives a positive answer to a question of Stoimenow~\cite[Question~4]{MR2159224}.
\begin{theoremA}
	\label{thm:1}
Almost positive links are strongly quasipositive.
\end{theoremA}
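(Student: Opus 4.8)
The plan is to reduce to a particularly simple almost positive diagram and then to eliminate, or render harmless, its unique negative crossing, organising the outcome via the characterisation of diagrams whose canonical Seifert surface is quasipositive that is the paper's second main result. Since a link is strongly quasipositive exactly when it bounds a quasipositive Seifert surface, and since strong quasipositivity is preserved under connected sum and split union (boundary-connected sums and disjoint unions of quasipositive surfaces are again quasipositive), it suffices to produce, for a given almost positive link $L$, a single diagram of $L$ whose canonical Seifert surface is quasipositive. First I would pass to an almost positive diagram $D$ of $L$, with unique negative crossing $c$, that is reduced, connected and prime: if $c$ is nugatory a Reidemeister~I move turns $D$ into a positive diagram, and positive links are already strongly quasipositive \cite{posisqpos,MR1772843}; a nugatory positive crossing is removed by a Reidemeister~I move without touching $c$; and disconnectedness and non-primeness are absorbed by the closure properties just recalled. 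Let $\Sigma$ be the canonical Seifert surface of $D$: all of its bands are positively half-twisted save the band $b$ carried by $c$, which is negatively half-twisted; write $s_1,s_2$ for the Seifert circles met by $b$.

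The core of the argument is a local analysis of $D$ around $c$. If $c$ bounds a bigon face together with a positive crossing $c'$, then either a Reidemeister~II move deletes $c$ and $c'$, producing an almost positive diagram of $L$ with two fewer crossings so that an induction on the crossing number proceeds, or $c$ and $c'$ form a clasp. In the clasp case — and, more generally, whenever $c$ is flanked by a positive crossing $c'$ that cooperates with it — I would use in an essential way that $c$ is the \emph{only} negative crossing of $D$ to control the two strands running through $c$, and then show that the part of $\Sigma$ assembled from $b$, the band of $c'$ and the Seifert disks of $s_1,s_2$ can be rebuilt, by an ambient isotopy of the full surface, using instead a single \emph{positively} half-twisted band of the quasipositive shape $\sigma_{i,j}$ running in front of the strands between $s_1$ and $s_2$: in effect, the positive band swallows the negative one. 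The resulting surface is the canonical Seifert surface of a modified diagram $D'$ of $L$ that now meets the hypotheses of the second main result, hence is quasipositive, so $L$ is strongly quasipositive; when instead every available reduction issues in a positive diagram, nothing remains to prove.

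The step I expect to be the main obstacle is precisely this local analysis: proving that a cooperating positive crossing $c'$ is always present, and that the band-swallowing isotopy is genuinely available. This is where "a single negative crossing" must do real work — ruling out configurations in which the two strands through $c$ run far apart past only uncooperative positive crossings — and I would expect it to be the technical heart of the paper, with the second main result supplying the bookkeeping that converts the local picture into the global conclusion. A lesser but genuine difficulty is arranging the reductions and modifications so that the induction on the crossing number closes — each step must strictly lower the crossing number or land in a diagram already covered by the characterisation — together with checking the handful of small base cases.
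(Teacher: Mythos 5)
Your broad strategy---exhibit a quasipositive Seifert surface and use \cref{thm:2} as a bookkeeping device---points in a reasonable direction, but the central modification you propose cannot work as stated, and your case analysis misses the structural dichotomy on which the problem actually hinges.

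Split almost positive diagrams according to whether the unique negative crossing $c$ is \emph{parallel} to some positive crossing, i.e.\ whether another crossing connects the same two Seifert circles. If not (type~I), the Seifert graph is bipartite and its unique negative edge lies on no $2$-cycle, so every cycle through it has length at least $4$ and total weight at least $2$, while all remaining cycles are all-positive; hence \cref{thm:2} already applies to $D$ itself and $\Sigma(D)$ is quasipositive with no modification needed. Your proposed induction, however, offers no reduction to apply here: $c$ need not be nugatory, need not lie in a bigon, and there is no cooperating parallel positive crossing $c'$. You write that ``when instead every available reduction issues in a positive diagram, nothing remains to prove,'' but a type~I diagram need never become positive under your moves, so your argument stalls precisely where the paper's first proposition does its work.

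If a positive crossing $c'$ is parallel to $c$ (type~II), the Seifert graph contains a $2$-cycle of total weight zero, so by \cref{thm:2} the canonical surface $\Sigma(D)$ is \emph{not} quasipositive, and here your plan hits a concrete wall. Replacing the two bands of $c$ and $c'$ by a single positively half-twisted band changes the Euler characteristic, so it cannot be realized ``by an ambient isotopy of the full surface''; indeed Stoimenow showed that type~II canonical surfaces are not even genus-minimizing, so no isotopy of $\Sigma(D)$ can produce the canonical surface of a positive or type~I diagram of the same link. What is actually needed---and what the paper supplies in Section~2---is the construction of a genuinely different Seifert surface $\Sigma'(D)$ with the same boundary but smaller genus, via a modified Seifert algorithm that leaves $c$ and $c'$ unresolved; quasipositivity of $\Sigma'(D)$ then requires its own induction using Murasugi sums and Hopf plumbings. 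Your ``band swallows band'' intuition is a reasonable local picture of one step in that construction, but it is a genus-dropping surgery rather than an isotopy, and it must be justified for arbitrary positions of $c'$ around the two Seifert circles, not only the adjacent bigon or clasp configurations you consider.
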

Note that the hypothesis cannot be weakened further (at least in the most obvious way), since links admitting diagrams with two negative crossings need not even be quasipositive (for example, the figure eight knot).

Almost positive links have been studied before they were given this name, and their similarity in many respects to positive links
has been observed. For example, Cromwell showed that almost positive links have Conway polynomials with
non-negative coefficients \cite{cromwell-homogeneous}; Przytycki and Taniyama proved they have negative signature \cite{MR2647054};
Stoimenow showed that non-trivial almost positive links are chiral and non-slice \cite{MR1825928};
and Tagami proved that the 3--genus, 4--genus, and $s/2$ (for $s$ the Rasmussen invariant) of almost positive knots agree \cite{tagami}.
\cref{thm:1} can be seen in this context.  In particular, \cref{thm:1} recovers the last result:
the slice-Bennequin inequality implies that for any strongly quasipositive knot (and thus for any almost positive knot)
the 3--genus, 4--genus, and all slice-torus invariants agree \cite{rudolph,livingston,lew2}.
This also proves chirality and non-sliceness for non-trivial almost positive knots.
Here, a \emph{slice-torus invariant} \cite{livingston,lew2} is a homomorphism $y$ from the smooth concordance group to $\R$
such that for all knots~$K$, $y(K)$ is a lower bound for the 4--genus of $K$, and for positive torus knots~$K$,
$y(K)$ is equal to the 4--genus of $K$. Examples of such $y$ include $\tau$ from knot Floer homology, and $s/2$.

To prove \cref{thm:1}, we explicitly exhibit quasipositive Seifert surfaces for all almost positive links.
For a certain type of almost positive diagram (which will later be referred to as type~I),
we prove that in fact the \emph{canonical surface} is quasipositive (the canonical surface is that produced from the diagram by Seifert's algorithm).
Canonical surfaces have been studied extensively; it is for example a classical result that the canonical surfaces of alternating diagrams are genus-minimizing \cite{MR0099664,MR0099665}, which generalizes to homogeneous diagrams \cite{cromwell-homogeneous},
and has recently been scrutinized further \cite{MR2443760,MR3499519}.
In this light, our proof of \cref{thm:1} naturally begs the question:
\emph{which canonical surfaces are quasipositive?}
The complete answer to this question forms our second main result:
namely a criterion in terms of the Seifert graph,
which is combinatorial and algorithmic.%
\begin{restatable}{theoremA}{thmB}
\label{thm:2}
A canonical surface is quasipositive if and only if all cycles of
its Seifert graph have strictly positive total weight.
\end{restatable}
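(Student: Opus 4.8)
The plan is to set up a dictionary between cycles of the Seifert graph and annular subsurfaces of the canonical surface, and then to run it in both directions. Write $F$ for the canonical surface of a diagram $D$, viewed as a union of disks (one per Seifert circle) and bands (one per crossing), with the Seifert graph $\Gamma$ as the spine of this handle decomposition, so that $H_1(F)$ is spanned by the cycles of $\Gamma$. For a cycle $C$ of $\Gamma$, let $F_C\subseteq F$ be a regular neighbourhood in $F$ of the corresponding circle in the spine. Since $F$ is orientable, $F_C$ is an annulus; its core $\gamma_C$ is homologically essential in $F$, so $F_C$ is incompressible in $F$. The observation I would isolate first, and which carries the geometric content, is this: \emph{$\gamma_C$ is an unknot, and the framing it inherits from $F$ equals the total weight of $C$}. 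Unknottedness follows by isotoping $\gamma_C$ to run along arcs of the (coplanar, pairwise disjoint) Seifert circles visited by $C$ and through the cores of the bands of $C$: the resulting diagram of $\gamma_C$ has no crossings, so $\gamma_C$ is unknotted with blackboard framing~$0$, and the framing coming from $F$ differs from the blackboard framing only by the twists of the bands of $C$, the sum of which is the total weight of $C$ by definition.

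For the ``only if'' implication I would argue by contradiction: suppose $F$ is quasipositive but some cycle $C$ of $\Gamma$ has total weight $w\le 0$. By Rudolph's characterisation~\cite{qpossubsurfaces}, $F$ embeds incompressibly in the fibre surface of a positive torus link; since incompressibility (injectivity on $H_1$) is transitive, so does $F_C$, which is therefore a quasipositive surface. But $F_C$ is an annulus whose core is an unknot of framing $w\le 0$, so $\partial F_C$ is the $(2,2w)$--torus link. If $w=0$ this is a two-component unlink, bounding two disjoint disks, a surface of Euler characteristic $2>0=\chi(F_C)$; this contradicts the fact that quasipositive surfaces have maximal Euler characteristic even among smooth surfaces in the $4$--ball~\cite{thom,rudolph}. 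If $w<0$ then $\partial F_C$ is a negative $(2,2|w|)$--torus link, of signature $2|w|-1>0$; but as the boundary of the quasipositive surface $F_C$ it is strongly quasipositive, and strongly quasipositive links have non-positive signature---a contradiction. (Alternatively, a negative torus link is the mirror of a non-trivial strongly quasipositive link, and no non-trivial link is strongly quasipositive together with its mirror.)

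For the ``if'' implication, assume every cycle of $\Gamma$ has strictly positive total weight; the goal is to show $F$ is quasipositive. The block decomposition of $\Gamma$ along its cut vertices induces a presentation of $F$ as an iterated plumbing of the canonical surfaces of the blocks, and the cycle hypothesis is inherited by each block; since plumbing preserves quasipositivity of surfaces, and since the canonical surface of a positive diagram is quasipositive (a refinement, implicit in the proofs, of the statement that positive links are strongly quasipositive~\cite{posisqpos,MR1772843}), it suffices to treat a $2$--connected $\Gamma$ with at least one negative edge. Here I would induct on the number of negative edges. Given a negative edge $e$, $2$--connectedness places $e$ on a cycle; every such cycle has total weight $\ge 1$, hence contains at least two positive edges beyond its negative ones, and in particular---using that the hypothesis forbids a weight-$0$ bigon through $e$---there is a positive band adjacent, along one of the two disks at $e$, to the band of $e$. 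Performing a suitable sequence of handle slides of the band of $e$ over such positive bands leaves $F$ unchanged up to isotopy while, I expect, presenting it as the canonical surface of a diagram of the same link with one fewer negative crossing and still obeying the cycle condition, which closes the induction. (The ``algorithmic'' framing in the introduction suggests that, alternatively, one could read off an explicit incompressible embedding of $F$ into a positive torus link fibre directly from $\Gamma$.)

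The step I expect to be the genuine obstacle is precisely this handle-slide reduction: one must choose the negative edge $e$, and normalise the diagram near it, so that the slides actually decrease the number of negative crossings---rather than merely relocating negativity---and so that all cycles remain of positive total weight, so that the inductive hypothesis applies. I would look for a negative crossing that is ``innermost'' or ``outermost'' with respect to the planar structure of the diagram, together with a local model of the diagram there, as the mechanism to make this work. By contrast, the ``only if'' direction and the reduction to $2$--connected blocks ought to be essentially formal once the cycle--annulus dictionary is in place.
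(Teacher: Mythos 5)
Your \emph{only if} direction and your reduction to $2$--connected blocks coincide with what the paper does (the paper's own statement of necessity is terser, but it is implicitly making exactly the cycle--annulus argument you spell out, appealing to fact~\ref{eq:subsurface} and the fact that an annulus of framing $\le 0$ on an unknot is not quasipositive).

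The problem is the \emph{if} direction for a $2$--connected $\Gamma$, which you rightly flag as the genuine obstacle --- and it is, in fact, a gap that your proposed mechanism cannot close. Your plan is to induct on the number of negative edges and, given a negative edge $e$, to handle-slide the band of $e$ over neighbouring positive bands so as to re-present $F$ as the canonical surface of a new diagram of the same link with strictly fewer negative crossings and the cycle condition intact. Two things go wrong. First, a handle slide moves the attaching region of the slid band onto a different disk and re-routes it through the band it was slid over; there is no reason the result should again be a union of coplanar disks joined by planarly-embedded half-twisted bands, i.e.\ a \emph{canonical} surface of some new diagram, and no reason the new band should carry a half-twist of the sign you want rather than, say, a three-half-twist. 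Second, and more decisively, the reduction is obstructed in the base-adjacent case: a type~I almost positive diagram (one negative crossing, not parallel to any positive crossing) automatically satisfies the cycle condition, since the Seifert graph is bipartite so every cycle through the negative edge has length $\ge 4$ and hence total weight $\ge 2$. If your reduction worked, it would convert every such diagram into a \emph{positive} diagram of the same link; but, as the paper emphasises, links admitting only type~I diagrams include ones for which even quasipositivity had not previously been established, i.e.\ links that are not positive. So the inductive step cannot always decrease the number of negative crossings while staying within canonical surfaces.

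The paper goes in the opposite direction: rather than trying to cancel negative bands, it \emph{grows} the surface. After the block reduction (and a ``move infinity'' normalisation so that $\Gamma(D)$ is a $2$--connected bipartite plane graph with every Seifert circle of empty interior), the inductive move is to pick a region of $\R^2 \setminus \Gamma(D)$ and add a chord of positive edges across it, subdividing that region into two smaller ones. This corresponds to adding $1$--handles to the canonical surface, so the original $\Sigma(D)$ is an incompressible subsurface of the new one and quasipositivity is inherited via~\ref{eq:subsurface}. The induction is on the sequence recording how many regions have each boundary length (ordered so that splitting a region strictly decreases complexity), with base case a $(2,n)$--torus-link fibre. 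The real work --- occupying \cref{prop:weight_4_region_exists_really,prop:weight_4_are_splittable} and \cref{lem:posvert,lem:weight_4_region_exists,lem:wicked_vertices_exist,lem:intersecting_lemma,lem:paths_of_low_weight_are_verboten,prop:not_positive_region,prop:positive_region} --- is to show that one can always find a region and two of its vertices across which a chord can be added without violating the cycle condition. That is where your proposal would need to be replaced, not just tightened.
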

Here, the \emph{Seifert graph} $\Gamma(D)$ of a diagram $D$ has the Seifert circles as vertex set,
and one edge between $k$ and $k'$ for each crossing connecting the Seifert circles $k$ and~$k'$.
It is a bipartite graph, possibly with multiple edges between two vertices.
Its edges carry a \emph{weight} of $\pm 1$ corresponding to the sign of the crossing.
The \emph{total weight} of a cycle is understood as the sum of the weights of the cycle's edges.
The reader will find more details on Seifert graphs at the beginning of \cref{sec:almost1}.

\subsection*{Applications of \cref{thm:2}}
\cref{thm:2} implies a purely geometric criterion for quasipositivity of a canonical Seifert surface, which we state as the following corollary.
\begin{restatable}{corollaryA}{corC}
\label{cor:thm2viagraph}
Let $\Sigma$ be a Seifert surface that is isotopic to a canonical surface.
Then~$\Sigma$ is quasipositive if and only if
every unknot contained in $\Sigma$
bounds a disk in~$\Sigma$ or has negative induced framing by $\Sigma$.
\end{restatable}
\cref{cor:thm2viagraph} does not generalize
to non-canonical Seifert surfaces; in fact, there exist
non-quasipositive Seifert surfaces $\Sigma$ such that all incompressible annuli of $\Sigma$ 
are quasipositive Seifert surfaces \cite{MR2823096}.

Next we observe the following criterion for quasipositivity. This allows, see the example below, to determine the strong-quasipositivity status of all prime knots up to 13 crossings, in particular recovering the recently completed calculation~\cite{12} of the strong-quasipositivity status of prime knots up to 12 crossings.
Throughout this subsection, let $y$ denote a slice-torus invariant.
\begin{restatable}{theoremA}{thmD}
\label{prop:s=cang=>sqp}  If $K$ is a knot with a canonical surface $\Sigma$ such that $y(K)=\mathrm{genus}(\Sigma)$, then $\Sigma$ is a quasipositive Seifert surface; in particular, $K$ is a strongly quasipositive knot.
\end{restatable}
Recall that the Bennequin inequality states $\frac{\mathrm{sl}(K)+1}{2}\leq g(K)$, where $\mathrm{sl}(K)$ is defined in either of the following two equivalent ways; see~\cite{benn}:
\begin{align*}
\mathrm{sl}(K)&\coloneqq\max\{\mathrm{sl}(T)\mid T\text{ is a transverse representative of $K$}\}\\
\mathrm{sl}(K)&\coloneqq \max\{\mathrm{writhe}(\beta)-n\mid\beta\text{ is an $n$-braid with closure } K\}.\end{align*}
It is a conjecture (popularized by Hedden, Etnyre and Van Horn-Morris amongst others)
that the Bennequin inequality is an equality if and only if $K$ is strongly quasipositive; compare also \cite{12}. As a consequence of \Cref{prop:s=cang=>sqp}, we confirm this conjecture for knots with canonical genus $\tilde{g}$ (the minimum genus of a canonical surface) equal to the genus.
\begin{restatable}{corollaryA}{corE}
\label{cor:g<=>cang=>sqp<=>s=g}
Let $K$ be a knot with $\tilde{g}(K) = g(K)$, i.e.\ a knot for which the genus $g(K)$ is realized by a canonical surface $\Sigma$. The following are equivalent:
\begin{enumerate}
\item $\Sigma$ is quasipositive,
\item $K$ is strongly quasipositive,
\item for $K$ the Bennequin-inequality is an equality, and
\item $y(K)=g(K)$.
\end{enumerate}
\end{restatable}
Note that for $K$ a fibered knot with fiber surface $\Sigma$, the conditions of (1)--(4) in \cref{cor:g<=>cang=>sqp<=>s=g}
are also equivalent \cite{MR2646650}.

Applied to the canonical surface $\Sigma(p_1,\ldots, p_{2n+1})$ of genus $n$ of the $P(p_1,\ldots, p_{2n+1})$ pretzel knot with all $p_1, \ldots, p_n\in\Z$ odd, \cref{thm:2} immediately yields that this surface is 
quasipositive if and only if $p_i + p_j<0$ for all $1\leq i < j \leq n$,
recovering a result of Rudolph's~\cite{rudolph,MR1840734}.
Moreover, since $\Sigma(p_1,\ldots, p_{2n+1})$ is genus-minimizing \cite[Theorem~3.2]{MR823442}, \cref{cor:g<=>cang=>sqp<=>s=g} now implies the following.
\begin{corollaryA}
The $P(p_1,\ldots, p_{2n+1})$ pretzel knot with all $p_1, \ldots, p_n$ odd
is strongly quasipositive if and only if $p_i + p_j<0$ for all $1\leq i < j \leq n$.\qed
\end{corollaryA}

\begin{examp}
\label{ex:13cr}
We claim that if $K$ is a prime knot with crossing number $c(K) \leq 13$,
then $K$ is strongly quasipositive if and only if $y(K) = g(K)$.
The `only if' direction holds for all knots. To show the `if' direction,
we rely on Stoimenow's calculation \cite{stoitables} that for all prime knots $K$ with $c(K) \leq 13$,
it holds that $\tilde g(K) = m(K)$, where $m$ denotes Morton's lower bound \cite{MR809504} for $\tilde g$
coming from the Homflypt-polynomial. So, using \cref{prop:s=cang=>sqp},
it is enough to show that all prime knots with $c(K) \leq 13$ satisfy $\tau(K) < g(K)$ or $\tau(K) = m(K)$.
This is readily verified by a computer calculation.

Note that this criterion is algorithmic, and can be used in practice to determine
the strong quasipositivity status of a given
prime knot $K$ with $c(K) \leq 13$: simply calculate $\tau(K)$ and $m(K)$; $K$ is strongly quasipositive
if and only if $\tau(K) = m(K)$.

We do not know whether $y(K) = g(K)$ also implies strong quasipositivity for prime knots $K$ with $c(K) = 14$.
For $c(K) = 15$, we know it does not: the 2-twisted positive Whitehead double of the right-handed trefoil knot
is a prime 15-crossing knot (15n115646 in the table) with $\tau = g = 1$, which is not quasipositive since its Rasmussen invariant $s$ is 0 \cite{heddord}.
\end{examp}
\begin{rmk}\label{baader:rmk}
\cref{thm:2} also provides a new proof for Baader's theorem \cite{MR2168087} that a knot $K$ is positive
if and only if it is strongly quasipositive and homogeneous.

Here, following \cite{cromwell-homogeneous}, a knot is called \emph{homogeneous} if it admits a homogeneous diagram $D$;
and $D$ is called homogeneous if all edges within each block of the Seifert graph $\Gamma(D)$ carry the same weight;
where a block $B$ of a graph $G$ is either an isolated vertex of $G$,
or a maximal subgraph of $G$ with the property that $B$ is connected and $B\setminus v$ is connected for all vertices $v$ of $B$.

We leave it as an exercise in graph theory to show that a knot diagram $D$ is homogeneous if and only if
all edges within each cycle of $\Gamma(D)$ carry the same weight.

The `only if' direction of Baader's theorem is clear.
For the `if' direction, let a strongly quasipositive and homogeneous knot $K$ be given.
Let $D$ be a homogeneous diagram of $K$ that is also reduced (has no nugatory crossings).
For $\Sigma$ the canonical surface of $D$, we have $g(\Sigma) = g(K)$ \cite{cromwell-homogeneous}.
So, by \cref{cor:g<=>cang=>sqp<=>s=g}, $\Sigma$~is quasipositive.
We are going to show that $D$ is a positive diagram.  
Let an edge $e$ of $\Gamma(D)$ be given.
Since the crossing corresponding to $e$ is not nugatory, $e$ is contained in a cycle $C$ of $\Gamma(D)$.
Because $\Sigma$ is quasipositive, by \cref{thm:2}, the cycle $C$ has positive total weight.
Therefore, $C$ must contain at least one edge with weight $+1$.
But as discussed above, the homogeneity of $D$ implies that all edges within $C$ carry the same weight,
so in particular, $e$ has weight $+1$. This concludes the proof.
\end{rmk}
\subsection*{Approach to proofs}
In this subsection we give more details regarding the proofs of the main theorems.
We distinguish two types of diagrams.
\begin{Def}
	We say that $D$ is of \emph{type~I} if $D$ is positive, or if $D$ is almost positive and there is no positive crossing \emph{parallel} to the unique negative crossing
(in other words connecting the same pair of Seifert circles).
\end{Def}
\begin{Def}
	We say that $D$ is of \emph{type~II} if it is almost positive, and there is a positive crossing parallel to the unique negative crossing.
\end{Def}
We opted to include positive diagrams in type~I because they behave similarly in our constructions as almost positive diagrams of type~I;
for example our proof of \cref{thm:1} in \cref{sec:almost1} for links with a diagram of type~I
recovers Rudolph's result that positive links are strongly quasipositive.

The distinction between type~I and II is rather natural and has been made previously \cite{MR2159224,tagami,MR3499519}.
Stoimenow shows that each of the two types is realized by knots that do not admit diagrams of the other type;
he further shows that an almost positive diagram has a minimal genus canonical surface if and only if it is of type~I.
We strengthen this result from `minimal genus' to `quasipositive'.

Indeed, we show that if $D$ is of type~I, then the canonical surface $\Sigma(D)$ is quasipositive.
If $D$ is of type~II, we construct a quasipositive Seifert surface $\Sigma'(D)$ with $\partial\Sigma'(D) = \partial\Sigma(D)$ of genus one less than $\Sigma(D)$. %
The alert reader has spotted that the quasipositivity of $\Sigma(D)$ for $D$ of type~I also follows from \cref{thm:2}.
Nevertheless, we are going to supply an independent proof, which also serves as a warm-up for the other proofs.

The quasipositivity of links admitting diagrams of type~II can be shown using a method due to Baader \cite{Baa} as remarked by Tagami \cite{tagami}. It does not seem clear, however, how this approach could be strengthened to give a proof of strong quasipositivity.
For links admitting only diagrams of type~I, even quasipositivity has not hitherto been established.

The proof strategy is similar for both theorems. The quasipositivity of the surface $\Sigma(D)$ or $\Sigma'(D)$ is established by induction
over some measure of the complexity of the Seifert graph $\Gamma(D)$.
For the induction step, the following two facts about quasipositive Seifert surfaces are crucial:\medskip

\begin{enumerate}[label=(\Alph*)]
\setcounter{enumi}{12}\item
Murasugi sums of quasipositive Seifert surfaces are quasipositive \cite{qposplumbing},
\label{eq:murasugi}

\setcounter{enumi}{18}\item
Incompressible subsurfaces of quasipositive Seifert surfaces are quasipositive.
\label{eq:subsurface}%
\end{enumerate}%
Note that \cref{eq:subsurface} is an immediate consequence of the characterization of quasipositive Seifert surfaces that we use.
\subsection*{Outline of the paper}
The remainder of the paper contains the proofs of the main results.
The proof of \cref{thm:1} is split into \cref{prop:CanSurfType1} for type~I in \cref{sec:almost1},
and \cref{prop:mainII} for type~II in \cref{sec:almost2}.
\cref{thm:2} is proven in \cref{sec:canonical}.
\Cref{cor:thm2viagraph}, \Cref{prop:s=cang=>sqp} and \Cref{cor:g<=>cang=>sqp<=>s=g}
are proven in \cref{sec:corollaries}.
Sections \ref{sec:almost1}, \ref{sec:almost2}, \ref{sec:canonical} and \ref{sec:corollaries} can essentially be read independently.
\subsection*{Acknowledgments}
The authors thank Sebastian Baader both generally for his advocacy of quasipositivity and specifically for a conversation that inspired an important step in the proof of \cref{thm:2}.
They also thank an anonymous referee for a suggestion which lead to further applications of \cref{thm:2}.
The first author gratefully acknowledges support by the SNSF Grant 181199.
The second author is supported by 
the DFG, project no.~412851057.

\section{Almost positive links of type I}
\label{sec:almost1}
The goal of this section is to prove the following.
\begin{proposition}\label{prop:CanSurfType1}
The canonical surface $\Sigma(D)$ of a diagram $D$ of type~I is quasipositive.
\end{proposition}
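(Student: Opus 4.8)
The plan is to induct on the number of Seifert circles of $D$ (equivalently, the number of vertices of the Seifert graph $\Gamma(D)$), using the two structural facts \cref{eq:murasugi} and \cref{eq:subsurface} about quasipositive Seifert surfaces. The base case is a diagram with a single Seifert circle; such a diagram has no crossings at all, so $D$ is necessarily positive (in fact trivial), and $\Sigma(D)$ is a disc, which is trivially quasipositive (it is an incompressible subsurface of the fiber disc of the positive torus link, i.e.\ the unknot). For the induction step, assume the statement holds for all type~I diagrams with fewer Seifert circles than $D$.

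First I would dispose of the case where $\Gamma(D)$ is disconnected or has a cut vertex: in that situation $\Sigma(D)$ is a (boundary) connected sum, or more generally a Murasugi sum along a disc, of the canonical surfaces of two type~I subdiagrams, each with strictly fewer Seifert circles; since being type~I is inherited by these pieces (the single negative crossing lives in exactly one of them, and parallelism of crossings is a local condition), the induction hypothesis plus \cref{eq:murasugi} finish this case. So I may assume $\Gamma(D)$ is $2$-connected. Next I would handle the case of a \emph{bivalent} Seifert circle, i.e.\ a vertex $v$ of $\Gamma(D)$ of degree $2$: the two crossings at $v$ either connect $v$ to a single neighbour (so $\Gamma(D)$ is a single bigon, forcing $D$ to be a $(2,n)$-torus link diagram with all positive crossings by type~I-ness, whose canonical surface is the fiber surface of a positive torus link) or to two distinct neighbours. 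In the latter case, because $D$ is type~I, the unique negative crossing (if any) is not parallel to either crossing at $v$; one can then perform a Reidemeister-II–type simplification — a flype or Murasugi desumming — that merges the two crossings at $v$, producing a smaller type~I diagram $D'$ with $\Sigma(D)$ obtained from $\Sigma(D')$ by a Murasugi sum with a Hopf band (a positive Hopf band, crucially, since the merged crossings both carry whatever sign does not create a new negative parallel pair). Invoking the induction hypothesis and \cref{eq:murasugi} again closes this case.

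The heart of the argument, and the step I expect to be the main obstacle, is the case where $\Gamma(D)$ is $2$-connected and every Seifert circle has valence at least $3$. Here there is no obvious Murasugi summand to split off, so I would instead argue that the surface is an incompressible subsurface of a larger quasipositive surface and then apply \cref{eq:subsurface}. Concretely: the goal is to embed $\Sigma(D)$ incompressibly into the fiber surface of a positive torus link by adding bands. The key structural input is that in a type~I diagram the negative crossing is never parallel to a positive one, so near the negative crossing the Seifert graph looks like a single edge of weight $-1$ joining two vertices that are also joined along some other path of positive edges; the $2$-connectivity and high valence let one find such a path whose total weight is large enough. The plan is to push the negative crossing "through" a positively-weighted cycle of $\Gamma(D)$ — replacing the weight-$(-1)$ edge by enough weight-$(+1)$ edges elsewhere via a sequence of handle slides / braid moves that do not change the boundary link and only enlarge the surface by positively-plumbed Hopf bands — until one reaches a positive diagram, whose canonical surface is known to be quasipositive by Rudolph's theorem (recovered here). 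Making this "pushing" rigorous — choosing the right cycle, controlling how the Seifert graph changes, and verifying that every intermediate surface remains a canonical surface of a type~I diagram with strictly smaller complexity — is the delicate part, and it is where a carefully chosen complexity measure on $\Gamma(D)$ (not merely the number of vertices, but something like the number of vertices plus the number of edges, or a lexicographic combination involving the valences) will be needed to make the induction actually terminate.
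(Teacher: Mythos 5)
Your overall architecture — induction using \cref{eq:murasugi} (Murasugi sums) and \cref{eq:subsurface} (incompressible subsurfaces), disposing first of split diagrams, cut vertices of $\Gamma(D)$, and bivalent Seifert circles — matches the paper's proof in spirit, and those easy reductions are essentially correct (modulo some imprecision: a bivalent circle connecting two \emph{distinct} neighbours cannot be ``merged'' by any Reidemeister or flype move, and the paper simply routes this subcase into its main reduction rather than treating it specially). The base cases are fine.

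The genuine gap is in the ``heart of the argument.'' You propose to ``push the negative crossing through a positively-weighted cycle, replacing the weight-$(-1)$ edge by weight-$(+1)$ edges elsewhere via handle slides/braid moves\dots until one reaches a positive diagram.'' This is not a well-defined operation on diagrams or Seifert graphs, and it is also not what happens in the correct argument: the paper's key move (case (\cref{c7})) \emph{does not alter the negative crossing at all}. Instead, one finds a closed interval embedded in the plane, disjoint from $D$ in its interior, whose two endpoints lie on two distinct \emph{coherently oriented} Seifert circles $k_1,k_2$, subject to the technical condition that if $k_1,k_2$ are both connected to a common third circle then both connecting edges are positive (this is what keeps the new diagram type~I). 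Adding a $1$--handle along that interval merges $k_1$ and $k_2$ into a single Seifert circle, so the new diagram $D'$ has strictly fewer Seifert circles; by construction $\Sigma(D)\subset\Sigma(D')$ is incompressible, so \cref{eq:subsurface} and the induction hypothesis finish the step. Showing that such an interval always exists when the degenerate cases don't apply is the content of the paper's exhaustiveness argument (a short walk-around-a-Seifert-circle argument). Your picture of ``re-routing the $-1$ edge'' would require changing crossings, which generally changes the link, and is not recoverable as a sequence of Hopf plumbings; without the $1$--handle idea the main case does not close.

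A secondary issue you correctly flag but leave unresolved is the complexity measure: induction on the number of Seifert circles alone does not terminate, because the Hopf-deplumbing reduction (paper's case (\cref{c6})) decreases crossings but not Seifert circles. The paper uses the sum of the numbers of Seifert circles and crossings; each of its reduction moves strictly decreases that sum.
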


Let us start by providing details regarding the Seifert graph $\Gamma(D)$ of a diagram~$D$, which was briefly defined in the introduction.

The set of edges adjacent to a vertex $k$ of $\Gamma(D)$ carries a cyclic ordering,
which comes from the ordering of crossings around the
Seifert circle $k$.  Moreover, $k$ separates $\mathbb{R}^2$ into an interior and an exterior.
So each edge adjacent to $k$ carries the additional information of \emph{on which side} of $k$ it lies.
We say two Seifert circles $k$ and $k'$ are \emph{nested} if one lies in the interior of the other.

If $D$ has no nested Seifert circles, that is the interior of every Seifert circle is empty, then shrinking every Seifert circle to a point
provides a canonical embedding of $\Gamma(D)$ into $\mathbb{R}^2$. Thus, if $D$ has no nested Seifert circles, we shall treat $\Gamma(D)$ as a plane graph (i.e.~a graph with a fixed embedding into $\mathbb{R}^2$).

\begin{figure}[hbt]
\begin{center}
(a) \includegraphics[width=.25\textwidth]{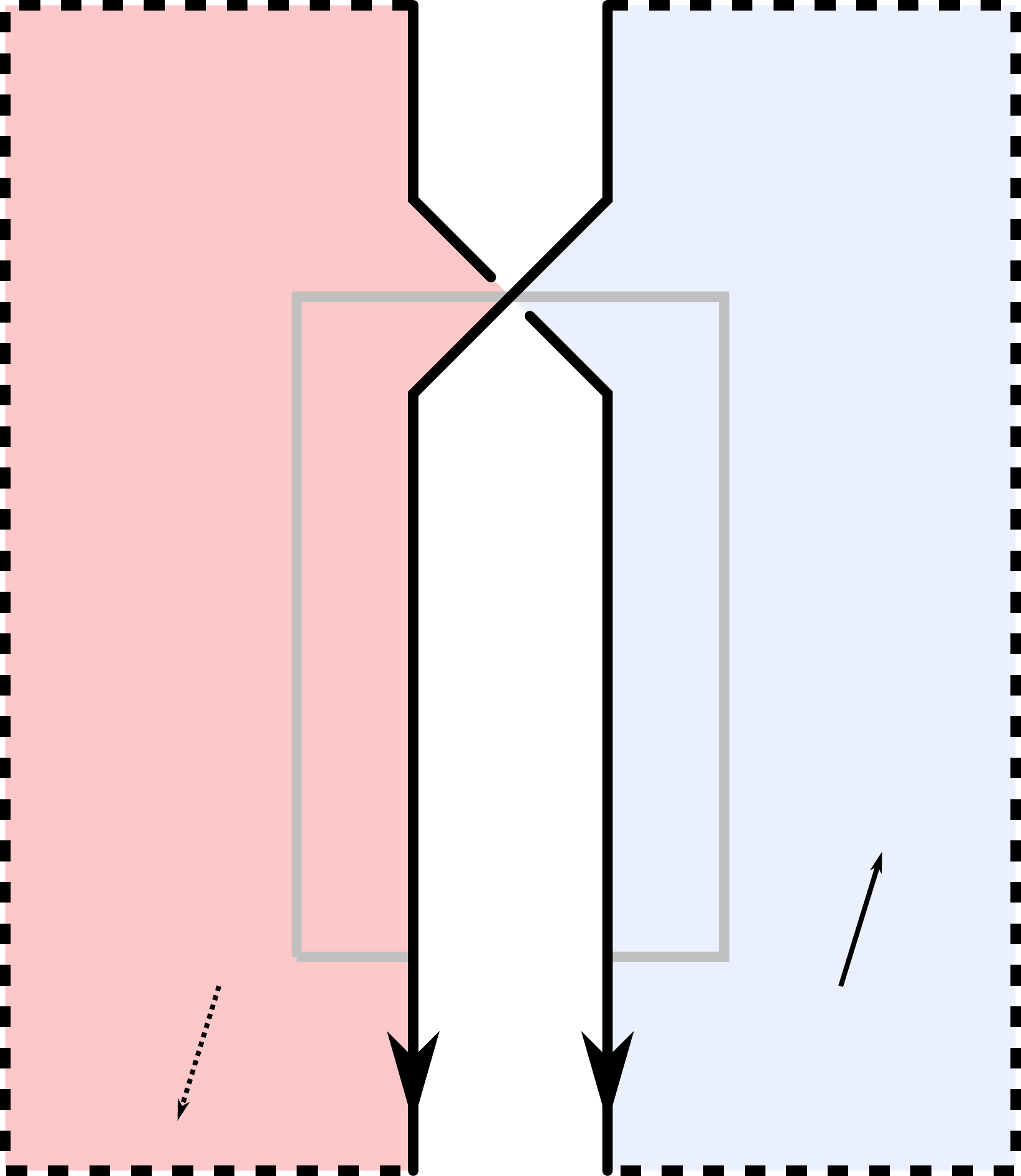}\hfill
(b) \includegraphics[width=.25\textwidth]{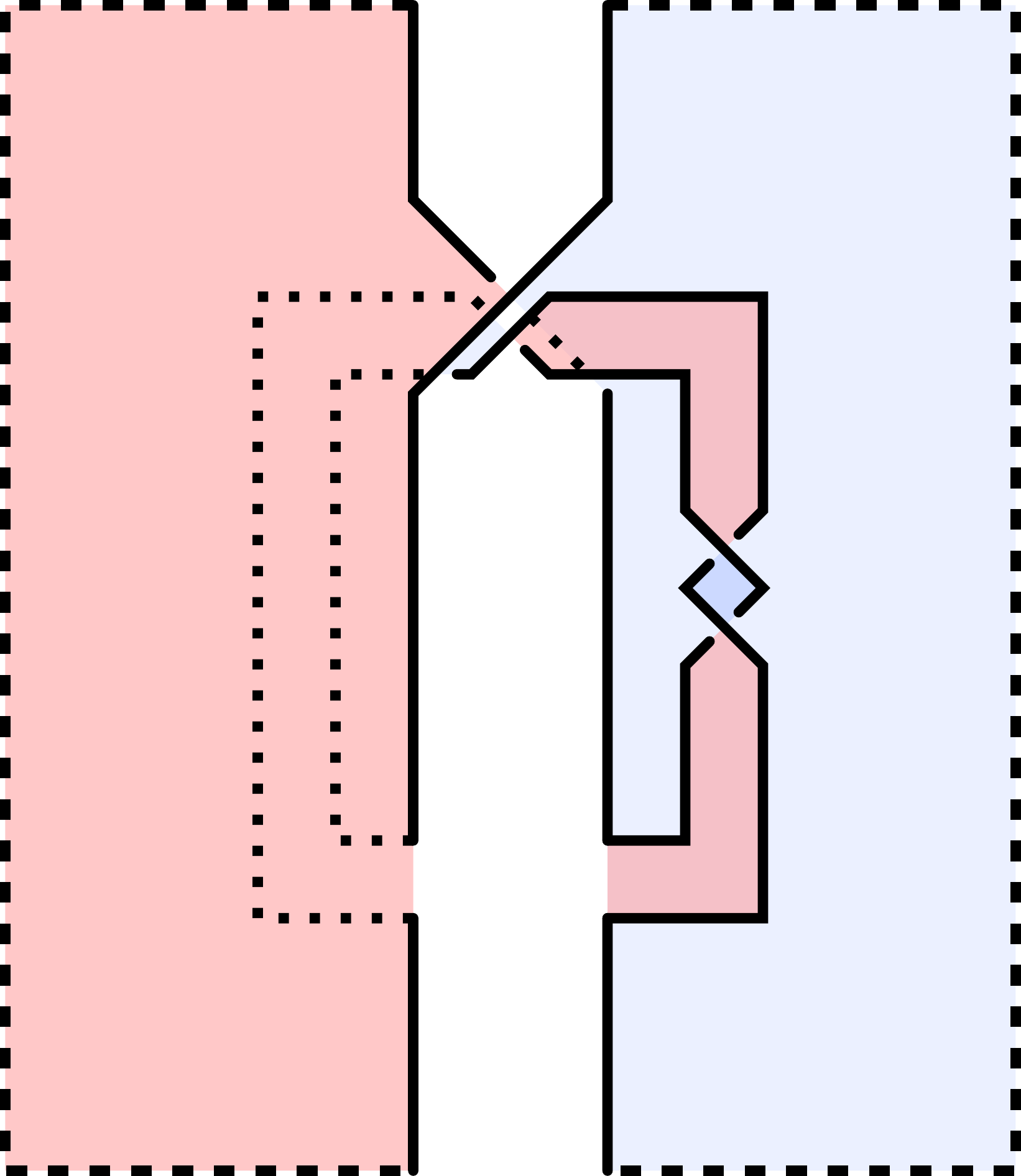}\hfill
(c) \includegraphics[width=.25\textwidth]{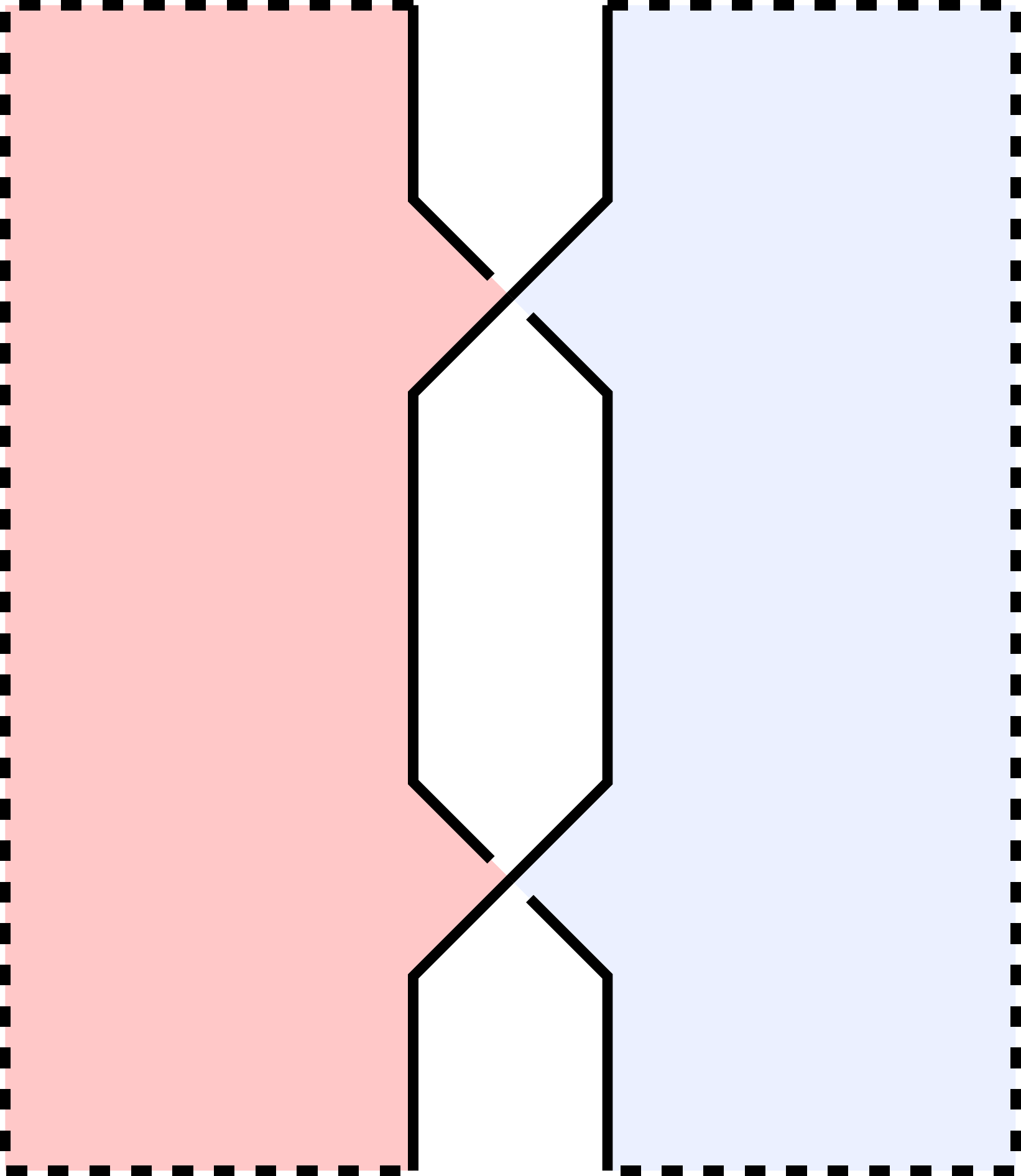}
\end{center}
\caption{Inserting a positive crossing next to another one by positive Hopf plumbing.
Red and blue indicate the two sides of oriented surfaces. Dotted lines are hidden below a surface.
\newline
\textbf{(a)} Two Seifert circles connected by a positive crossing. The small arrows indicate positive normal vectors of the surfaces.
\newline
\textbf{(b)} Surface obtained from (a) by plumbing a positive Hopf band along the gray curve on the positive side of the surface.
\newline
\textbf{(c)} This surface is isotopic to (b) (pull the Hopf band away from the crossing). Note that the central white region could contain infinity.
}
\label{fig:plumb}
\end{figure}

Next, we will need two lemmas giving sufficient diagrammatic conditions for the canonical surface being a Murasugi sum
and a Hopf plumbing, respectively.

\begin{lemma}[cf. \cite{cromwell-homogeneous}]
\label{lem:intext}
Let $D$ be a non-split link diagram (i.e.~$D$ is not a disjoint union of link diagrams) and let $k$ be a Seifert circle of~$D$.
Let $D_i$ and $D_e$ be the link diagrams forming the closure of the interior and the exterior of $k$, respectively (so that $D_i\cap D_e=k$). Then $\Sigma(D)$ is a Murasugi sum of $\Sigma(D_i)$ and $\Sigma(D_e)$.\qed\end{lemma}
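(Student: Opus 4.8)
The plan is to realize the canonical surface $\Sigma(D)$ directly in Murasugi-sum position, with summing disk the disk $\delta$ bounded by $k$. Recall that Seifert's algorithm builds $\Sigma(D)$ as a union of disks---one for each Seifert circle---joined by half-twisted bands---one for each crossing. First I would isotope this surface in $\R^3 = \R^2\times\R$, where the diagram lies in $\R^2\times\{0\}$, so that the disk $\delta$ bounded by $k$ stays flat in $\R^2\times\{0\}$, every disk of a Seifert circle lying in the interior of $k$ is pushed slightly into $\R^2\times(-\infty,0)$, and every other disk---including those of circles nested around $k$---is pushed slightly into $\R^2\times(0,\infty)$. Then the band of a crossing not incident to $k$ lies entirely below, or entirely above, the plane, on whichever side both disks it joins lie; and the band of a crossing incident to $k$ attaches to $\delta$ along a short subarc of $k$ and otherwise runs strictly below or strictly above the plane, according as the other Seifert circle at that crossing lies inside or outside $k$.

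Next I would let $S$ be the sphere $\delta\cup\delta'$, where $\delta'$ is a disk in the lower half-space $\R^2\times(-\infty,0]$ with $\partial\delta' = k$ chosen to lie below all the pushed-down pieces of $\Sigma(D)$. Then $S$ bounds a ball $B_i$ (lying between $\delta$ and $\delta'$) that contains every disk and band of $\Sigma(D)$ coming from $D_i$ except $\delta$ itself, and the complementary ball $B_e$, containing $\infty$, contains all pieces coming from $D_e$; moreover $\Sigma(D)\cap S = \delta$. Thus $\Sigma(D)\cap B_i$ is the surface built by Seifert's algorithm from $D_i$---whose Seifert circles are $k$ and the circles interior to $k$, and whose crossings are those interior to $k$---so $\Sigma(D)\cap B_i$ is isotopic to $\Sigma(D_i)$, and likewise $\Sigma(D)\cap B_e$ is $\Sigma(D_e)$. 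Hence $\Sigma(D) = \Sigma(D_i)\cup_\delta\Sigma(D_e)$ with $\Sigma(D_i)\cap\Sigma(D_e) = \delta\subset S$, which is the defining data of a Murasugi sum. (If one wants the polygon description: $\partial\delta = k$ is cut by the feet of the crossings incident to $k$ into finitely many arcs lying, in the appropriate alternating sense, on $\partial\Sigma(D_i)$ and on $\partial\Sigma(D_e)$, exhibiting $\delta$ as the required $2n$-gon.)

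The routine but genuinely delicate point---and the one I expect to be the main obstacle---is the embedded set-up of the first paragraph: since the interior of $k$ may itself contain nested Seifert circles, one has to check that the recursive height assignment underlying Seifert's algorithm for $D$ can be chosen so that all disks of circles inside $k$ come to lie strictly below the plane and all other disks strictly above it, with no band forced to meet $S$ away from $\delta$. Here the hypothesis that $D$ is non-split is used: it guarantees $D_i\cap D_e = k$, that $D_i$ and $D_e$ are genuine link diagrams, and that $\Sigma(D_i)$ and $\Sigma(D_e)$ meet along $\delta$ as claimed rather than in a degenerate way. Once this picture is set up, nothing further is needed; this is in essence the argument of \cite{cromwell-homogeneous}.
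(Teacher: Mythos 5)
The paper does not itself prove this lemma: the statement carries a terminal \qed\ together with a ``cf.''-citation, so the proof is delegated to \cite{cromwell-homogeneous}. Your outline---realize the Murasugi sum by splitting $\Sigma(D)$ along the two-sphere $\delta\cup\delta'$ containing the Seifert disk $\delta$ bounded by $k$, with the material coming from $D_i$ in one complementary ball and the material from $D_e$ in the other---is the standard argument and, as you say, essentially Cromwell's, so the overall route is correct.

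However, the point you flag as ``genuinely delicate'' is, as written, a genuine gap rather than a routine check. You want to isotope $\Sigma(D)$ so that $\delta$ is flat, every disk of a circle inside $k$ lies below the plane of $\delta$, and every other disk (including those of circles nesting $k$) lies above. This is incompatible with the usual ``nested implies higher'' height convention for every nested pair that straddles $k$: for instance if $a\subset k\subset b$, the usual construction puts the disk of $a$ above $\delta$ and the disk of $b$ below $\delta$, which is the opposite of your arrangement. The paper is explicit that such choices matter---\cref{rem:gSeifertAlg} warns that running Seifert's algorithm with a non-standard height order produces a surface that ``is in general not isotopic to the canonical surface.'' So the assertion that your height assignment produces a surface isotopic to $\Sigma(D)$, whose two halves on either side of $S$ are isotopic to $\Sigma(D_i)$ and $\Sigma(D_e)$, is true but needs an actual argument (or a precise citation); ``one has to check'' is exactly where the content of the lemma lives. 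Deferring to Cromwell at that step is defensible given that the paper does the same, but it makes your write-up an annotated sketch rather than a self-contained proof.
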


\begin{lemma}\label{lem:plumb}
Let $D$ be a 
link diagram with a positive crossing $c$ between two Seifert circles $k$ and $k'$ that are not nested.
Let $D'$ be the diagram obtained from $D$ by inserting another positive crossing $c'$ that is parallel to $c$ and such that $c$ is the next crossing after $c'$ with respect to
both the cyclic orderings of crossings around $k$ and around $k'$.  Then $\Sigma(D')$ is the plumbing of $\Sigma(D)$ and a positive Hopf band.
\end{lemma}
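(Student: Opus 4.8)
The plan is to exhibit $\Sigma(D')$ explicitly as a surface built from $\Sigma(D)$ by attaching a band, and then recognize that the attached piece is a plumbed-on positive Hopf band. First I would set up the local picture near the crossings $c$ and $c'$. Since $c$ and $c'$ are parallel (both connecting $k$ and $k'$) and consecutive in the cyclic orderings around both $k$ and $k'$, the two crossings together with the two arcs of $k$ and the two arcs of $k'$ between them bound a disk region $R$ of the diagram containing no other crossings; moreover, because $k$ and $k'$ are not nested, this region sits "between" them in the plane in the expected way. In the canonical surface, the two Seifert disks of $k$ and $k'$ are joined by two half-twisted bands, one for $c$ and one for $c'$. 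Passing from $D$ to $D'$ replaces a single band (the one for $c$) by two bands (for $c$ and $c'$) running in parallel; equivalently, $\Sigma(D')$ is obtained from $\Sigma(D)$ by attaching a single new band — the one for $c'$ — along two small arcs in the boundary of $\Sigma(D)$ that lie on either side of the band for $c$.

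Next I would identify the Murasugi summand. The new band for $c'$, together with a small neighborhood in $\Sigma(D)$ of the two arcs it is attached along and the strip of the Seifert disks of $k$ and $k'$ lying between the feet of $c$ and $c'$, forms an annulus with a single positive full-... more precisely, a once-positively-twisted band, i.e.\ a positive Hopf band $H$. The key point is that $H$ meets $\Sigma(D)$ exactly in a disk: namely the quadrilateral disk $\delta$ cut out of $\Sigma(D)$ by the band for $c$ and the two strips of the $k$- and $k'$-disks between the feet of $c$ and $c'$. So $\Sigma(D') = \Sigma(D) *_\delta H$, a Murasugi sum along the disk $\delta$; since $H$ is a Hopf band, this Murasugi sum is by definition a Hopf plumbing. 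One must check the standard local requirements for a Murasugi sum: that $\delta$ is properly embedded in $\R^3$ as the intersection of $\Sigma(D)$ and $H$, that it separates $\R^3$ into two balls containing $\Sigma(D)$ and $H$ respectively, and that $\delta$ is a disk in the boundary of each. All of this is local to the region $R$ and follows from the consecutiveness hypothesis, which guarantees there is no interference from other crossings or Seifert circles.

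The main obstacle is purely a matter of bookkeeping with the diagrammatic conventions: making sure that the band for $c'$ is inserted on the correct side (so that $c$ is indeed the very next crossing after $c'$ around both $k$ and $k'$ — this is exactly what makes $\delta$ a disk rather than a more complicated subsurface), and verifying that the full twist of the resulting Hopf band is positive rather than negative. The sign is forced by the fact that $c$ and $c'$ are both positive crossings and parallel: the two bands for $c$ and $c'$ cobound an annulus in $\Sigma(D')$ whose core curve has self-framing $+1$ with respect to $\Sigma(D')$, which is the defining property of the positive Hopf band. I would draw the local picture carefully (one figure suffices) and let it carry most of the argument; the only genuine content beyond the picture is the observation that "consecutive around both circles" is precisely the condition ensuring the separating surface $\delta$ is an embedded disk, so that Lemma \ref{lem:intext}-style reasoning applies and the construction is an honest plumbing.
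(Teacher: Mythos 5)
Your approach is essentially the paper's: the published proof is simply a reference to a three-panel figure that plumbs a positive Hopf band onto $\Sigma(D)$ and isotopes the result to exhibit it as $\Sigma(D')$, and your written-out version reproduces exactly that picture, with correct emphasis on where the consecutiveness and non-nestedness hypotheses are used (no interference from other crossings/circles, so that the plumbing square is an embedded disk).

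One internal slip worth fixing: as you define it, $H$ is not an annulus. You take $H$ to be the $c'$-band together with two strips of the Seifert disks of $k$ and $k'$ between the feet of $c$ and $c'$, which gives a rectangle (a disk), not an annulus; the $c$-band is missing. The annulus you want is the $c'$-band, the two disk-strips, \emph{and} the $c$-band, yielding a once-twisted annulus whose boundary is the positive Hopf link. The plumbing square $\delta$ is then $H \cap \Sigma(D) = (\text{strip in } k) \cup (c\text{-band}) \cup (\text{strip in } k')$, which matches your subsequent description of $\delta$ — so your definitions of $H$ and of $\delta$ are currently inconsistent with each other ($\delta \not\subset H$ as stated). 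Repairing the description of $H$ to include the $c$-band resolves this; the rest of the argument (disk-ness of $\delta$, the local sphere separating $H \setminus \delta$ from $\Sigma(D)\setminus\delta$, positivity of the full twist) is as you say. You might also note explicitly, as the paper's figure caption does, that the plumbing occurs on the positive side of $\Sigma(D)$; this is the diagrammatic version of the sign check you defer to at the end.
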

\begin{proof}
See \cref{fig:plumb}.
\end{proof}

\begin{proof}[Proof of \cref{prop:CanSurfType1}]
We prove that $\Sigma(D)$ is quasipositive by induction over the sum of the number of Seifert circles of $D$ and the number of crossings of $D$.

Suppose that $D$ is a diagram with a Seifert circle that has empty exterior and non-empty interior.  By `moving infinity' we may change this to a diagram $D'$ with no such circle and such that $\Sigma(D') = \Sigma(D)$.  So without loss of generality we may assume that $D$ has no Seifert circle with empty exterior and non-empty interior.

Consider the following cases; we will prove below that they are exhaustive.
\begin{enumerate}
\item\label{c1}%
If $D$ consists of a single Seifert circle:
\newline
The canonical surface $\Sigma(D)$ is a disk, which is quasipositive.
\item\label{c2}%
If $D$ is split, i.e.\ $D = D_1 \sqcup D_2$ for link diagrams  $D_1$ and $D_2$:
\newline
The surfaces $\Sigma(D_i)$ are quasipositive by induction, and, thus, so is $\Sigma(D) = \Sigma(D_1) \sqcup \Sigma(D_2)$.
\item\label{c3}%
If there is a nugatory crossing (i.e.\ an edge removing which would disconnect~$\Gamma(D)$):
\newline
Let
$D'$ denote the diagram obtained by untwisting.
Then the surfaces $\Sigma(D)$ and $\Sigma(D')$ are isotopic and, by induction, $\Sigma(D')$ is quasipositive.
\item\label{c4}%
If $D$ has a Seifert circle with non-empty interior and non-empty exterior:
\newline
If $D$ is a split diagram, we proceed as in case (\cref{c2}). Otherwise, the canonical Seifert surface $\Sigma(D)$ is the Murasugi sum of two canonical surfaces by \cref{lem:intext}.
Since these two summands are quasipositive by induction, $\Sigma(D)$ is quasipositive by \cref{eq:murasugi}.
\end{enumerate}
\begin{enumerate}[resume]
\item\label{c5}%
If there is a Seifert circle with empty interior that is adjacent to exactly two crossings, one of which
is a positive crossing and the other is a negative crossing:
\newline
A Reidemeister-II-move removes that circle and the crossings adjacent to it, producing a diagram $D'$ such that $\Sigma(D)$ and $\Sigma(D')$ are isotopic and $\Sigma(D')$ is quasipositive by induction.
\item\label{c6}%
If a pair of non-nested Seifert circles are connected by two positive crossings that are next to each other (i.e.\ as in the hypothesis of \cref{lem:plumb}):
\newline
Denote by $D'$ the diagram obtained by deleting one of these crossings.
Then $\Sigma(D')$ is quasipositive by induction, and $\Sigma(D)$ is a plumbing of $\Sigma(D')$ and a positive Hopf band by \cref{lem:plumb}.  Thus $\Sigma(D)$ is quasipositive by \cref{eq:murasugi}.
\item\label{c7}%
If there is a closed interval embedded in the plane such that
\begin{enumerate}
\item its interior is disjoint from $D$,
\item its endpoints lie on two distinct Seifert circles $k_1, k_2$,
\item $k_1, k_2$ are oriented coherently,
\item \label{d4} if $k_1$ and $k_2$ are both connected to a third circle, then both of the connecting edges are positive:
\end{enumerate}
Denote by $D'$ the diagram obtained by adding a $1$--handle along that closed interval.
Because $D'$ has one fewer Seifert circle than $D$ and $D'$ is of type I by (\cref{d4}), $\Sigma(D')$ is quasipositive by induction.
Since $\Sigma(D')$ contains $\Sigma(D)$ as an incompressible subsurface, $\Sigma(D)$ is quasipositive by \cref{eq:subsurface}.
See \Cref{fig:pretzel} for an example of this case.
\end{enumerate}
\begin{figure}[th]
\includegraphics[width=.7\textwidth]{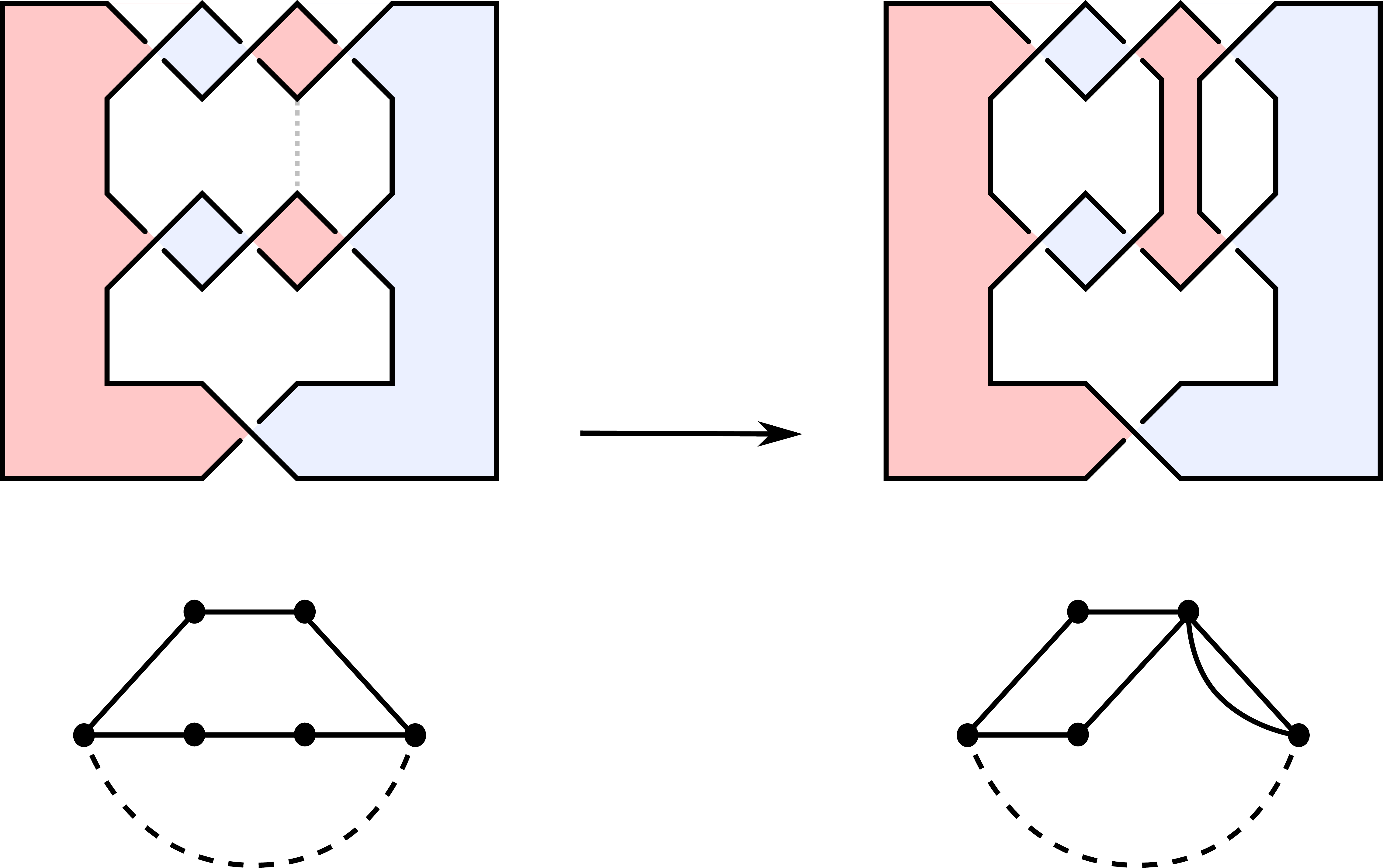}
\caption{Top and left: a type I diagram $D$ (the $(-3,-3,1)$--pretzel diagram) and $\Sigma(D)$. Below: its Seifert graph, with the unique edge of weight $-1$ drawn dashed.
On the right: a diagram $D'$ obtained from $D$ by applying (\cref{c7}) to the closed interval drawn gray and dotted, the surface $\Sigma(D')$, and the graph $\Gamma(D')$.}
\label{fig:pretzel}
\end{figure}
Let us prove that the above cases are exhaustive. For this, let us assume that (\cref{c1})--(\cref{c6}) are not satisfied,
and deduce that (\cref{c7}) is.
Note that the exclusion of (\cref{c2}) and (\cref{c4}) implies that no Seifert circles in $D$ are nested, so its Seifert graph $\Gamma(D)$ can be seen canonically as a plane graph. Furthermore, the exclusion of (\cref{c2}) implies that $\Gamma(D)$ is a connected graph, the exclusion of (\cref{c1}) and (\cref{c3}) imply that all vertices of $\Gamma(D)$ have degree at least $2$,
and by the exclusion of (\cref{c5}) vertices adjacent to a negative edge have degree at least $3$.

We distinguish two cases based on whether $\Gamma(D)$ contains a negative edge or not.
In both cases, we succeed in finding an interval as in (\cref{c7}). See \cref{fig:U1x} for an illustration.
\begin{figure}[hbt]
	\labellist
	\tiny
	\pinlabel {$k_1$}     at -100 640
	\pinlabel {$c$}       at  700 220
	\pinlabel {$c'$}      at 1350 940
	\pinlabel {$k$}       at 1150 -60
	\pinlabel {$k_2$}     at  800 1590
	\pinlabel {$k_2=k_1$} at 1950 440
	\pinlabel {$k_2=k_1$} at 2000 1290
	\pinlabel {$c$}       at 2750 210
	\pinlabel {$c'$}      at 2650 1210
	\pinlabel {$c''$}     at 2350 730
	\pinlabel {$k_3$}     at 2870 750
	\pinlabel {$k$}       at 3550 690
	\endlabellist
	\includegraphics[scale=0.075]{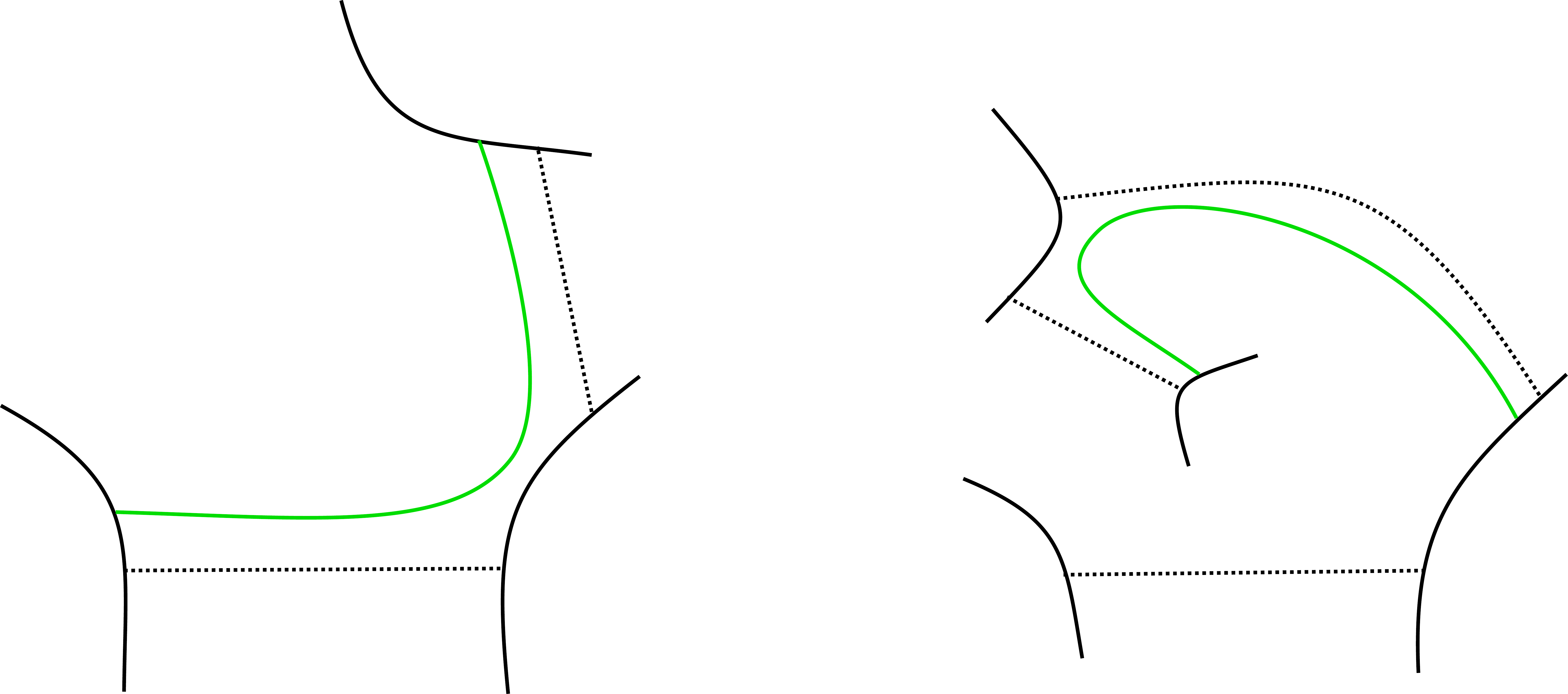}
	\caption{How to find an interval as in (\cref{c7}) (drawn green).}
	\label{fig:U1x}
\end{figure}

\textbf{Case 1.} Suppose there is no negative edge.
Pick any edge $c$ connecting circles $k$ and $k_1$. Since $k$ has degree at least 2, one may
walk from the edge $c$ in clockwise direction around $k$
until the next edge $c'$, which connects $k$ to some Seifert circle~$k_2$.

If $k_1 \neq k_2$ then there is a closed interval as in (\cref{c7}) which connects $k_1$ and $k_2$ by following $c$ and $c'$.  Note that because there is no negative edge, (\cref{d4}) is vacuously true.

If $k_1 = k_2$,
then walk clockwise around $k_2$ from where $c'$ meets $k_2$ until we meet the next edge $c''$.
If $c'' = c$, then we are in case (\cref{c6}).
If $c'' \neq c$, note that topologically $c''$ cannot connect to $k$ since we would then have met it when walking clockwise along $k$ from $c$ to $c'$.  So $c''$ connects to a Seifert circle $k_3 \neq k$.  There is then an interval connecting $k$ to $k_3$ by following $c'$ and $c''$.

\textbf{Case 2.} Now suppose there is a negative edge connecting circles $k$ and $k_0$.
Because $k$ has degree at least 3, one may walk from that edge in clockwise direction around $k$ until an edge $c$ connecting $k$
and some Seifert circle $k_1$, and still further until an edge $c'$ connecting $k$ and some Seifert circle $k_2$.
Because the diagram is type~I, one has $k_1 \neq k_0$ and $k_2 \neq k_0$.
Now one may proceed exactly as in the previous case to find an interval as needed for (\cref{c7}).
We note that the intervals as constructed above also satisfy (\cref{d4}) because none of $k_1, k_2$ and $k_3$ are adjacent to the negative edge.
\end{proof}

\section{Almost positive links of type II}
\label{sec:almost2}
In this section, we prove the second half of \cref{thm:1}.
\begin{proposition}\label{prop:mainII}
If $D$ is a link diagram %
of type II, then $D$ represents a strongly quasipositive link.
\end{proposition}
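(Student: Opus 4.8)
The plan is to prove the stronger statement by exhibiting, as the introduction announces, a quasipositive Seifert surface $\Sigma'(D)$ with $\partial\Sigma'(D)=\partial\Sigma(D)$ and $g(\Sigma'(D))=g(\Sigma(D))-1$. The reason $\Sigma(D)$ itself will not do is visible through \cref{cor:thm2viagraph}: writing $c_-$ for the unique negative crossing, joining Seifert circles $k$ and $k'$, and $c_+$ for a positive crossing parallel to it, the Seifert disks of $k$ and $k'$ together with the bands of $\Sigma(D)$ at $c_-$ and $c_+$ form an annulus on whose core curve the induced surface framing vanishes (one negative and one positive half-twist cancel). The surface $\Sigma'(D)$ will be obtained by removing this annulus, concretely by cancelling $c_-$ against a parallel $c_+$ by a Reidemeister~II move once the plane region between the two crossings has been cleared of other Seifert circles.

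The argument runs by induction on a complexity of the Seifert graph $\Gamma(D)$, say the number of crossings plus the number of Seifert circles: whenever a reduction produces a positive diagram we invoke \cref{prop:CanSurfType1} directly, and whenever it produces a strictly smaller almost positive diagram we invoke the induction hypothesis. First I would dispatch the cases already handled in the proof of \cref{prop:CanSurfType1}, essentially verbatim: split diagrams (treat the pieces separately, noting that only one of them contains $c_-$), nugatory crossings (untwist), nested Seifert circles (write $\Sigma(D)$ as a Murasugi sum via \cref{lem:intext} and apply \cref{eq:murasugi}), a Seifert circle of degree two carrying one positive and one negative crossing (remove it by Reidemeister~II), two adjacent parallel positive crossings between non-nested circles (de-plumb a positive Hopf band via \cref{lem:plumb} and apply \cref{eq:murasugi}), and the addition of a $1$--handle along a suitable interval as in case~(\cref{c7}) followed by \cref{eq:subsurface}. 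Each such move strictly lowers the complexity, preserves almost positivity, and is compatible with the bookkeeping $g(\Sigma')=g(\Sigma)-1$, since the handle and the Murasugi summands are irrelevant to the single unit of genus removed near $c_-$. It therefore suffices to treat diagrams to which none of these reductions applies.

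For such a reduced diagram I would run an exhaustiveness analysis on $\Gamma(D)$ in the spirit of Cases~1 and~2 in the proof of \cref{prop:CanSurfType1}, but now started at the negative crossing---whose two endpoints have degree at least three because case~(\cref{c5}) has been excluded, and whose neighbours along both incident Seifert circles are positive because $D$ is almost positive. The goal is to conclude that $c_-$ and some parallel positive crossing $c_+$ are consecutive among the $k$--$k'$ edges in the cyclic orders around both $k$ and $k'$ and cobound an empty plane region; a Reidemeister~II move then cancels $c_-$ and $c_+$ and yields a positive diagram $D'$ with $\partial\Sigma(D')=\partial\Sigma(D)$. As this move removes two crossings and leaves the number of Seifert circles unchanged, the surface $\Sigma'(D):=\Sigma(D')$ has genus $g(\Sigma(D))-1$ and is quasipositive by \cref{prop:CanSurfType1}. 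Undoing the reductions of the previous paragraph, applying \cref{eq:murasugi} and \cref{eq:subsurface} at each step, then produces a quasipositive Seifert surface bounding $\partial\Sigma(D)$ in general, which completes the induction.

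I expect the main obstacle to be this last exhaustiveness analysis: one must rule out, for a reduced diagram, that other Seifert circles are ``trapped'' in every plane region bounded by $c_-$ and a neighbouring parallel crossing, so that a genuine Reidemeister~II move is always available---and, where it is not, that a complexity-decreasing handle move of the type in case~(\cref{c7}) can take over instead. A secondary technical point is to verify that the handle additions leave $\Sigma(D)$ as an \emph{incompressible} subsurface of the enlarged surface, so that \cref{eq:subsurface} genuinely applies, and that the genus and almost-positivity bookkeeping stays consistent throughout the recursion.
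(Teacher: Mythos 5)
Your plan mirrors the paper's high-level strategy---exhibit a quasipositive surface $\Sigma'(D)$ with $\partial\Sigma'(D)=\partial\Sigma(D)$ of one lower genus, proved by induction over diagrammatic reductions---but two ingredients are missing, and the first of them is fatal to the argument as stated.

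The endgame is wrong. You assume that after exhausting the reductions (split, nugatory, Murasugi split along a nested circle, degree-$2$ circle, Hopf de-plumbing, $1$--handle addition), the diagram will have $c_-$ and a parallel $c_+$ cobounding an empty bigon, so a Reidemeister~II move produces a positive diagram. This is false: the paper's exhaustiveness analysis (\cref{claim:regions} and \cref{lem:cases_are_exhaustive}) shows that when no reduction applies, one is left with eight specific non-trivial diagrams $D_1,\dots,D_8$ (\cref{fig:SpecialCases}) or degenerations thereof, in which the regions $O_1,O_2$ bounded by the curves $s_1,s_2$ (the generalized Seifert circles through $c_\pm$) still contain a Seifert circle apiece, and $U_1,U_2$ still contain crossings joining $s_1$ to $s_2$. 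No Reidemeister~II move eliminating $c_-$ is available there, nor is a complexity-decreasing handle move. The paper must argue \emph{directly} that $\Sigma'(D_i)$ is quasipositive in each of these cases (\cref{lem:special_cases_are_qp}), e.g.\ by computing Euler characteristics, identifying the boundary links, and matching to Murasugi sums of Hopf bands and torus-link fibers using uniqueness of the fiber surface. You correctly flagged this exhaustiveness step as the ``main obstacle'' but proposed no mechanism to dispatch the residual cases; they cannot be waved away.

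There is a second, structural gap: you only define $\Sigma'(D):=\Sigma(D')$ at the terminal stage of the reduction, yet several reductions ($1$--handle addition and Murasugi splitting) change the boundary link, so ``undoing the reductions, applying \cref{eq:murasugi} and \cref{eq:subsurface} at each step'' does not produce a surface with boundary $\partial\Sigma(D)$ unless $\Sigma'$ has been defined on every intermediate diagram. The paper resolves this by constructing $\Sigma'(D)$ \emph{at the outset} for every type-II diagram via a generalized Seifert algorithm (resolving all crossings except $c_\pm$ and choosing disk heights by a specific convention adapted to $s_1,s_2$), which is what makes the inductive bookkeeping cohere. Relatedly, the paper's reduction (5') uses a notion of crossings being ``next to each other'' that allows swapping a crossing over $c_\pm$; your ordinary notion from the type-I proof is strictly weaker, which would leave you with an even larger set of irreducible cases than the paper's eight.
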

We first describe how to associate a Seifert surface $\Sigma'(D)$ to such a diagram $D$, which is similar to the canonical surface but with smaller first Betti number. Afterwards we will show that $\Sigma'(D)$ is a quasipositive Seifert surface.

\begin{construction}[Generalized Seifert algorithm]
Let $D$ be a diagram with exactly one negative crossing $c_-$. Further suppose $c_-$ is parallel to a positive crossing $c_+$. If there is more than one positive crossing parallel to $c_-$, we fix a choice of $c_+$.
We describe a version of Seifert's algorithm adapted to this setting that associates a Seifert surface $\Sigma'(D)$ %
with the diagram $D$ as follows.

Resolve all crossings except for $c_-$ and for $c_+$ in the oriented manner (as in Seifert's algorithm).
This produces a two-crossing diagram $D_0$ of an unlink $L_0$.
The diagram $D_0$ consists of two twice transversely intersecting curves, which we call $s_1$ and $s_2$, and simple closed curves that are pairwise disjoint and disjoint from $s_1$ and $s_2$, which we refer to as \emph{Seifert circles}. We take $s_1$ to be the curve that goes over.
We refer to the union of the Seifert circles and $\{s_1,s_2\}$ as \emph{generalized Seifert circles}.
For the rest of this section, we only consider $D$ such that $s_1$ is oriented clockwise and $s_2$ is oriented counterclockwise; which, if not the case, can be achieved by `moving infinity' without changing the associated link. See \cref{fig:s1s2}.

\begin{figure}[hb]
\centering
	\labellist
	\tiny
	\pinlabel {$s_1$} at    0  900
	\pinlabel {$s_2$} at 1900  900
	\pinlabel {$c_+$} at  950  -50
	\pinlabel {$c_-$} at  950 1250
	\pinlabel {$U_1$} at -200  620
	\pinlabel {$U_2$} at  950  620
	\pinlabel {$O_1$} at  400  620
	\pinlabel {$O_2$} at 1500  620
	\endlabellist
\includegraphics[width=.5\textwidth]{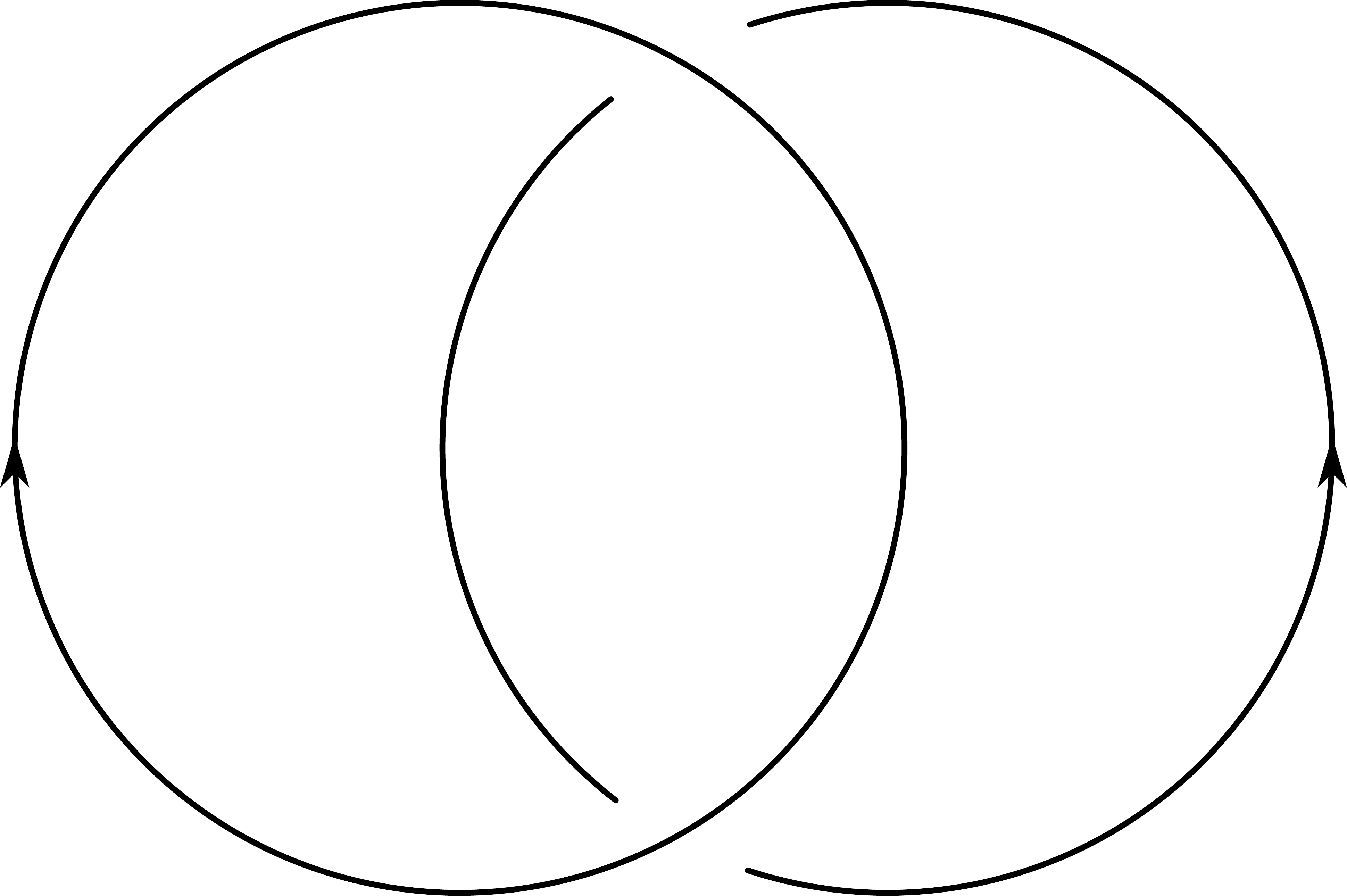}
\caption{$s_1$ and $s_2$ cut the plane into four regions.}
\label{fig:s1s2}
\end{figure}

As in Seifert's algorithm,
pick a disjoint union of oriented disks $d_i$ in $\R^3$ with constant $z$--coordinate, one for each generalized Seifert circle $k_i$, such that the boundary of $d_i$ projects to $k_i$ preserving orientation,
and glue in a twisted ribbon for each crossing to obtain $\Sigma'(D)$.
We choose the $z$--coordinates for the disks as follows.
\begin{enumerate}
  \item \label{item:1}
   The disk corresponding to $s_1$ has to lie above the disk corresponding to~$s_2$. In other words (using the convention that $s_1$ is oriented clockwise and goes over $s_2$), the positive sides of the disks face each other.
\item \label{item:2} Let $k_1$ be a generalized Seifert circle lying wholly inside a generalized Seifert circle $k_2$. The disk $d_1$ corresponding to $k_1$ lies to the positive side of the disk $d_2$ corresponding to $k_2$. In other words, a positive normal to $d_2$ points in the direction of $d_1$.
\end{enumerate}
Any such choice of $z$--coordinates assures that glueing in the twisted ribbons provides an embedded surface.

The choice of $z$--coordinate for disks corresponding to Seifert circles nested in $s_1$ and $s_2$ is crucial. For other disks, other choices work equally well in what is done below (save small changes in the details of the proof of \cref{lem:nextoisplumbing}).

 \end{construction}
\begin{remark}\label{rem:gSeifertAlg}
The above generalized Seifert algorithm produces a Seifert surface out of any link diagram $D$ with two marked crossings that are parallel and of opposite sign.

When no $c_-$ and $c_+$ are specified, the above algorithm produces a Seifert surface from a link diagram (simply ignore \eqref{item:1}).
However, that Seifert surface is in general not isotopic to the canonical surface, since
$z$--coordinates are usually chosen differently in the usual Seifert algorithm (nested implies higher, independent of the orientations of the disks). In this section, we write $\Sigma(\,\cdot\,)$ for the Seifert surface constructed by the Seifert algorithm using the height order given in \eqref{item:2}.

We note that the proof of \cref{prop:CanSurfType1} holds \emph{verbatim} for the $z$-coordinate conventions in this section.  Hence if $D$ is a positive diagram we already know that $\Sigma(D)$ is a quasipositive Seifert surface.
\end{remark}

In the usual way, we specify each crossing (different from $c_-$ and $c_+$) by giving an embedded closed interval $c$ (which we shall also call a \emph{crossing}) in $\mathbb{R}^2$.  The boundary points of each such crossing will lie on two different generalized Seifert circles, and each such crossing will be disjoint from $c_-$ and $c_+$, with its interior disjoint from all generalized Seifert circles. See \cref{fig:SpecialCases} for examples.

\begin{figure}[hb]
	\labellist
	\tiny
	\pinlabel {$\cdots$} at 3720 1180
	\pinlabel {$\cdots$} at 970 350
	\pinlabel {$\cdots$} at 3720 220
	\pinlabel {$\cdots$} at 5700 140
	\pinlabel {$\cdots$} at 5700 870
	\pinlabel {$c$} at 120 900
	\pinlabel {$c'$} at 2030 940
	\pinlabel {$k$} at 2130 300
	\pinlabel {$k_1$} at 2590 940
	\pinlabel {$k_2$} at 2590 200
	\pinlabel {$c$} at 3060 500
	\pinlabel {$c'$} at 4380 500
	\pinlabel {$c$} at 5240 500
	\pinlabel {$c'$} at 6710 500
	\endlabellist
	\includegraphics[scale=0.05]{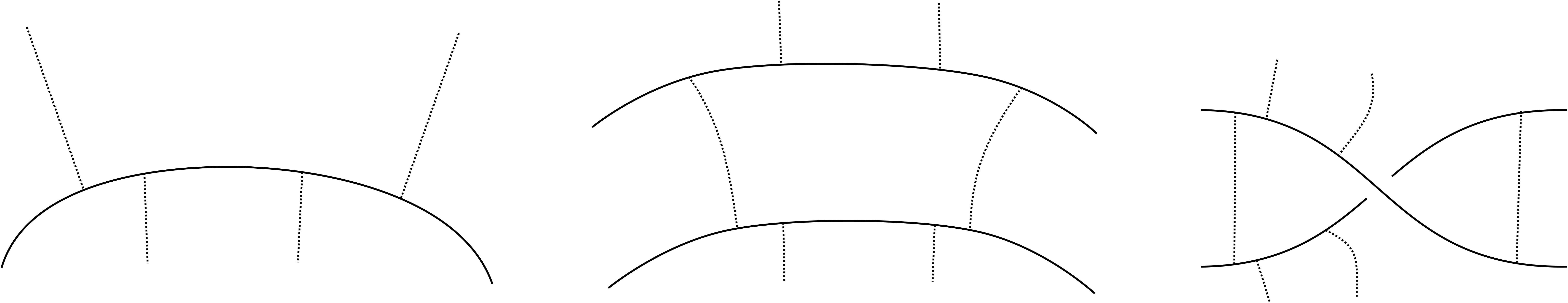}
	\caption{Left: Crossings $c$ and $c'$ next to each other on $k$.
		Middle and Right: Crossings $c$ and $c'$ next to each other.}
\label{fig:nextto}
\end{figure}
\begin{figure}[ht]
\begin{center}
	\labellist
	\tiny
	\pinlabel {$c_\pm$} at 850 650
	\pinlabel {$c_\pm$} at 3650 650
	\pinlabel {$c$} at 1350 850
	\pinlabel {$c$} at 3060 850
	\endlabellist
\includegraphics[width=.7\textwidth]{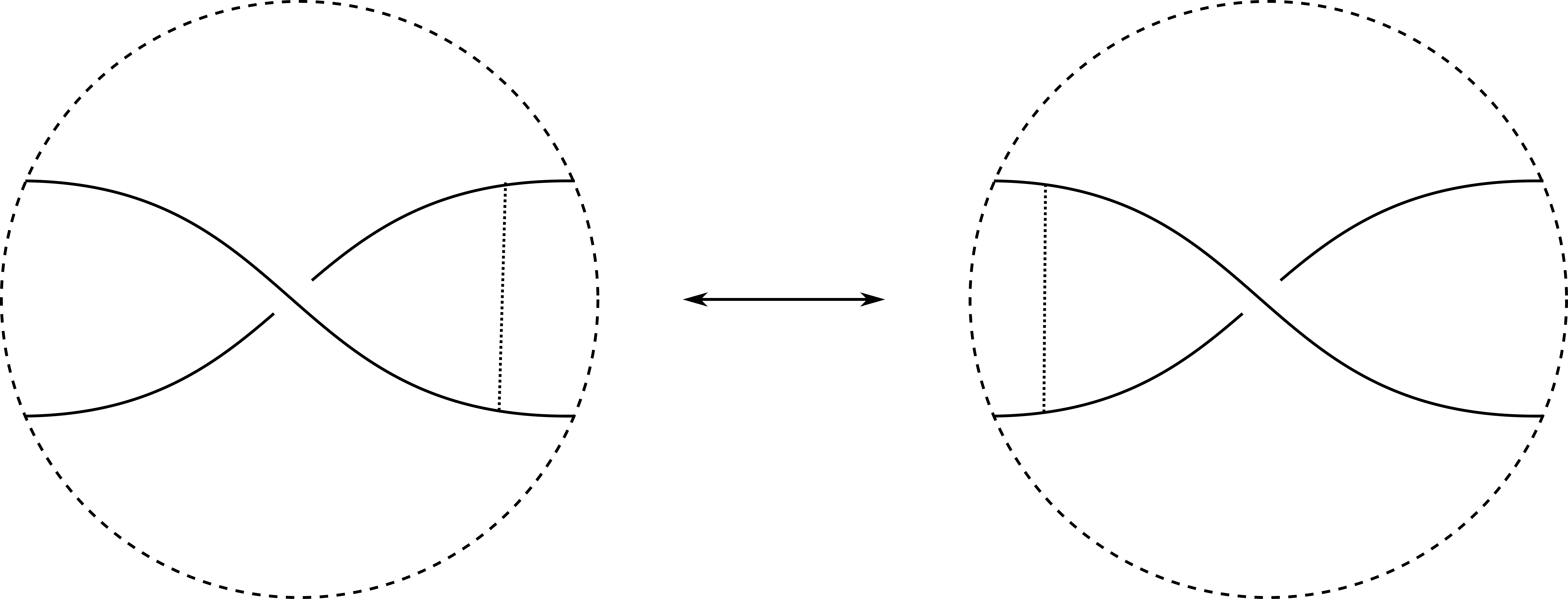}
\end{center}
\caption{
Swapping the crossing $c$, which is adjacent to $s_1$ and~$s_2$. Note that no other Seifert circles or crossings are present in the disk where the modification occurs.}
\label{fig:swapping}
\end{figure}

\begin{definition}
	We give a few notions which we shall refer to throughout the remainder of this section.
	
\begin{itemize}
\item From here on, unless otherwise stated, a \emph{crossing} refers to a crossing of $D$ that is neither $c_+$ nor $c_-$.
\item A crossing is said to be \emph{adjacent} to the generalized Seifert circles on which its endpoints lie.

\item Two crossings $c$ and $c'$ are said to be \emph{next to each other on a generalized Seifert circle} $k$, if they are both adjacent to $k$, they both lie to the same side of $k$, and there is a closed subinterval $I$ of $k$ with endpoints on $c$ and $c'$ such that $I$ does not contain $c_-$ or $c_+$ and there are no crossings with endpoints on $I$ that lie to the same side of $k$ as $c$ and $c'$. See \cref{fig:nextto}(left).

\item Two crossings $c$ and $c'$ are said to be \emph{next to each other}, if they are both adjacent to the same two generalized Seifert circles $k_1$ and $k_2$, they are next to each other on both $k_i$, witnessed by intervals $I_i$, such that the union $S=c \cup c' \cup I_1\cup I_2$ has the property that one of the two components of $\R^2\setminus S$ contains no generalized Seifert circles and no crossings. See \cref{fig:nextto}(middle).

\item We also say $c$ and $c'$ are \emph{next to each other} if they are next to each other after swapping one of them over $c_-$ or $c_+$; see \cref{fig:nextto}(right).

\item Here \emph{swapping a crossing $c$ over $c_-$} or \emph{over $c_+$} is the operation on diagrams defined by a modification of a diagram in a disk as described in \cref{fig:swapping}.
\item The union of $s_1$ and $s_2$ separates $\R^2$ into four regions.  Two of these have inconsistently oriented boundaries induced from the orientations of $s_1$ and~$s_2$.  We denote the unbounded region $U_1$ and the other $U_2$.  We denote the remaining two regions by $O_1$ and $O_2$, where $O_i$ is the region contained inside~$s_i$. See \cref{fig:s1s2}.
\end{itemize}
\end{definition}

We note that if the diagram $D'$ arises from $D$ by swapping a crossing, then $\Sigma'(D')$ and $\Sigma'(D)$ are isotopic Seifert surfaces.

\begin{proof}[Proof of \cref{prop:mainII}]

As in the proof of \cref{prop:CanSurfType1}, we shall proceed by induction on the sum of the number of Seifert circles and the number of crossings and consider a list of cases.  That these cases are exhaustive is the content of \cref{lem:cases_are_exhaustive} below.  We shall refer back to the proof of \cref{prop:CanSurfType1} for how to proceed with some of these cases.

\begin{figure}[ht]
	\label{fig:ocelot}
	\labellist
	\tiny
	\pinlabel {$D_1$} at -120 5300
	\pinlabel {$D_2$} at 4650 5300
	\pinlabel {$D_3$} at -120 3710
	\pinlabel {$D_4$} at 4650 3710
	\pinlabel {$D_5$} at -120 2120
	\pinlabel {$D_6$} at 4650 2120
	\pinlabel {$D_7$} at -120 530
	\pinlabel {$D_8$} at 4650 530
	\endlabellist
	\includegraphics[scale=0.06]{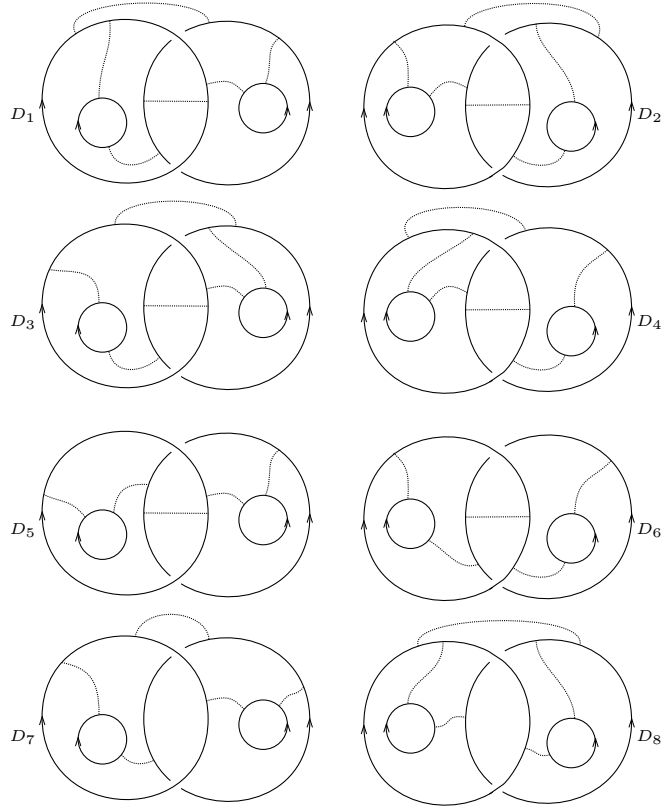}
	\caption{Diagrams to which \cref{c'split}--\cref{c'Interval} do not apply.}
\label{fig:SpecialCases}
\end{figure}

\begin{enumerate}[label=(\arabic*')]
\item\label{c'specalcase}
If $D$ is one of the 8 diagrams indicated in \cref{fig:SpecialCases} or a diagram obtained from one of them by deleting Seifert circles or crossings:
\newline
The Seifert surface $\Sigma'(D)$ is a quasipositive Seifert surface, as demonstrated in \cref{lem:special_cases_are_qp}.
\item\label{c'split} If $D$ is split:
\newline
Proceed as in (2) (using $\Sigma'(\,\cdot\,)$ rather than $\Sigma(\,\cdot\,)$ for the part of the diagram that contains the $s_i$). Explicitly, writing the diagram $D$ as $D_- \sqcup D_+$, where $D_-$ contains $c_-$ (and thus also $c_+$), we have that $\Sigma'(D) = \Sigma'(D_-) \sqcup \Sigma(D_+)$
is quasipositive, since $\Sigma'(D_-)$ is quasipositive by induction, and $\Sigma(D_+)$ is quasipositive because $D_+$ is positive.
\item If there is a nugatory crossing:
\newline
Proceed as in (3). %
\item\label{c'murasugi} If a Seifert circle has non-empty interior and exterior:
\newline
Proceed as in (4).
\item\label{c'nextto} If two generalized Seifert circles are connected by two crossings that are next to each other:
     \newline
     By \cref{lem:nextoisplumbing} (analog of \cref{lem:plumb} provided at the end of this section), we obtain a diagram $D'$ by removing one of the two crossings such that $\Sigma'(D)$ is quasipositive if and only if $\Sigma'(D')$ is quasipositive.  However, by induction, $\Sigma'(D')$ is a quasipositive Seifert surface.
\item\label{c'Interval} If there is a closed interval embedded in $\R^2$ such that
\begin{enumerate}
\item its interior is disjoint from $D$,
\item one of its endpoints lies on a Seifert circle $k_1$ and the other endpoint lies on a generalized Seifert circle $k_2$, and
\item $k_1, k_2$ are oriented coherently:
\end{enumerate}
Denote by $D'$ the diagram obtained by adding a $1$--handle along that closed interval.
Because $D'$ has one fewer Seifert circle than $D$, $\Sigma'(D')$ is quasipositive by induction.
And since $\Sigma'(D')$ contains $\Sigma'(D)$ as an incompressible subsurface, $\Sigma'(D)$ is quasipositive by \cref{eq:subsurface}.\qedhere
\end{enumerate}
\end{proof}
Let us first establish that the above cases are exhaustive.

\begin{lemma}\label{claim:regions}
If the conditions of none of \cref{c'split}--\cref{c'Interval} are satisfied, then

\begin{enumerate}[label=\roman*)]
\item the regions $U_1$ and $U_2$ contain no Seifert circles,

\item $O_1$ and $O_2$ each contain at most one Seifert circle,

\item each of the Seifert circles has exactly 2 positive crossings adjacent to it,
\item in each $U_i$ there is at most one crossing between $s_1$ and $s_2$.
\end{enumerate}
\end{lemma}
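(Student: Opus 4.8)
The plan is to argue by contradiction: assuming none of \cref{c'split}--\cref{c'Interval} applies, I will show that each of the four conclusions must hold, essentially by exhibiting a forbidden interval (triggering \cref{c'Interval}) or a forbidden configuration (triggering one of the earlier cases) whenever a conclusion fails. The key observation throughout is that the union $s_1\cup s_2$ cuts $\R^2$ into the four regions $U_1,U_2,O_1,O_2$ of \cref{fig:s1s2}, that $U_1,U_2$ have inconsistently oriented boundary (so a Seifert circle in $U_i$ is oriented coherently with part of $s_1$ or $s_2$, but crucially the relevant arc of $\partial U_i$ is traversed in one direction), whereas $O_1,O_2$ have coherently oriented boundary. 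First I would record that, since \cref{c'murasugi} fails, no Seifert circle is nested inside another ordinary Seifert circle, so every Seifert circle lies in exactly one of the four regions and is not contained in any other Seifert circle; and since \cref{c'split} fails, the diagram is connected, so every Seifert circle is joined by at least one crossing to a generalized Seifert circle.

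For part~i), suppose $U_i$ contains a Seifert circle $k$. Because $U_i$ has inconsistently oriented boundary, I can find an embedded arc from $k$ to whichever of $s_1,s_2$ bounds the coherently oriented portion of $\partial U_i$ adjacent to $k$ (or, if $k$ is not outermost in $U_i$, to a Seifert circle between $k$ and that boundary arc); choosing $k$ innermost among Seifert circles of $U_i$ and the arc to run through the region between $k$ and the appropriate arc of $s_1$ or $s_2$ gives an interval whose interior misses $D$, with endpoints on coherently oriented circles, so \cref{c'Interval} would apply --- a contradiction. (One must check the orientation bookkeeping here: in $U_i$ the boundary orientations of $s_1$ and $s_2$ are opposite, so exactly one of them is coherent with a given $k$, and that is the one to connect to.) For part~ii), if $O_i$ contained two Seifert circles, then since $\partial O_i = s_i$ is coherently oriented and the two circles are non-nested (by failure of \cref{c'murasugi}), an innermost-type argument again produces an interval between the two circles, or between one circle and $s_i$, with coherent orientations, again triggering \cref{c'Interval}.

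For part~iii): each Seifert circle $k$, being none of the exceptional diagrams and having no nested companions, has some crossing on it; using failure of \cref{c'nextto} (no two crossings next to each other on a generalized Seifert circle) and failure of the nugatory-crossing case, one shows $k$ must have degree at least two and that all its adjacent crossings are positive --- indeed $c_-$ and $c_+$ are not adjacent to ordinary Seifert circles by construction, so every crossing on $k$ is of the ordinary type, and an argument just like Case~1 in the proof of \cref{prop:CanSurfType1} (walk clockwise around $k$ from one crossing to the next) produces either a forbidden interval for \cref{c'Interval} or the ``next to each other'' configuration of \cref{c'nextto}, unless $k$ has exactly two adjacent crossings; their signs are forced to be positive because a negative crossing on $k$ would be $c_-$, which is parallel to $c_+$ and hence adjacent to $s_1,s_2$, not to $k$. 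Finally part~iv): if some $U_i$ contained two crossings $c,c'$ both joining $s_1$ and $s_2$, then --- since $U_i$ contains no Seifert circles by part~i) and the portion of $U_i$ between $c$ and $c'$ is then a disk meeting $D$ only in arcs of $s_1,s_2,c,c'$ --- the crossings $c$ and $c'$ would be next to each other in the sense of the definition (possibly after swapping over $c_\pm$, cf.\ \cref{fig:swapping}), so \cref{c'nextto} would apply. The main obstacle I anticipate is the careful orientation- and side-bookkeeping in parts~i) and~iv): one has to be sure that the interval one constructs really has coherently oriented endpoints and interior disjoint from $D$, and that the ``between $c$ and $c'$'' region in $U_i$ is genuinely empty of generalized Seifert circles after accounting for the possibility that $s_1$ or $s_2$ re-enters that region --- which is where the precise geometry of the two-crossing unlink diagram $D_0$, and the option of swapping over $c_-$ or $c_+$, must be used.
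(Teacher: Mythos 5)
Your overall plan — argue by contradiction and exhibit a forbidden configuration triggering one of \cref{c'split}--\cref{c'Interval} — is exactly the paper's strategy, and your reading of the geometry of $U_1,U_2,O_1,O_2$ is correct.  For part~iv) your argument essentially matches the paper's (it uses i) and then observes that two crossings in $U_i$ must be next to each other, possibly after swapping over $c_\pm$).  The divergence, and the gap, is in parts~i)--iii).

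For i) and ii) you propose an ``innermost'' argument: pick an innermost Seifert circle in $U_i$ (resp.\ $O_i$) and run an arc to the appropriate boundary arc of $s_1$ or $s_2$, or to an intermediate circle.  You flag the orientation bookkeeping as an ``obstacle to anticipate'' rather than resolving it, and this is precisely where the difficulty lies.  Since Seifert circles in $U_i$ or $O_i$ are not nested (once \cref{c'murasugi} is excluded), ``innermost'' is not a well-defined selection, and in any case the face of $\R^2\setminus D$ through which you would draw the arc may not reach a coherently oriented generalized Seifert circle: the face could be bordered on the far side by an incoherently oriented arc of $s_1$ or $s_2$, or by crossings.  The paper sidesteps this entirely by working locally along a crossing: it takes \emph{any} Seifert circle $k_1$ in $U_i$, a crossing $c$ from $k_1$ to some $k$, and the next crossing $c'$ on $k$, and constructs the arc by following $c$ and $c'$ (and possibly a further $c''$).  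Because crossings always connect coherently oriented Seifert circles, coherence of the endpoints of the constructed arc is automatic, and the case analysis (whether $c'$ exists, whether $k_1=k_2$, whether $k$ is $s_1$ or $s_2$) replaces your global innermost search.  Your sketch does not supply a substitute for this mechanism, so as written the coherence hypothesis of \cref{c'Interval} is not verified.

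For iii) your account also drifts from what is actually needed: the paper derives iii) as a quick corollary of ii), namely that once $O_i$ contains at most one Seifert circle, every crossing on that circle is adjacent to $s_1$ or $s_2$, and if there were three or more crossings then two would go to the same $s_j$ and be next to each other, triggering \cref{c'nextto}.  You instead invoke a ``walk around $k$'' argument in the style of Case~1 of the proof of \cref{prop:CanSurfType1}; without first invoking ii) this does not rule out the possibility of an additional Seifert circle adjacent to $k$, and it is not clear the walk alone forces exactly two adjacent crossings.  Your observation that any crossing on $k$ is positive (because $c_-$ and $c_+$ are the crossings between $s_1$ and $s_2$) is correct and matches the paper.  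In short: the skeleton is right, but the local-crossing mechanism that makes the orientation condition of \cref{c'Interval} come for free — the crux of the paper's argument — is missing, so parts~i) and~ii) of your proposal have a genuine gap.
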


We postpone the proof of \cref{claim:regions} and apply it to %
prove the following.
\begin{lemma}
	\label{lem:cases_are_exhaustive}
If $D$ is a diagram such that \cref{c'split}--\cref{c'Interval} do not apply, then \cref{c'specalcase} applies to $D$.
\end{lemma}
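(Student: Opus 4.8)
The plan is to use \cref{claim:regions} to show that any diagram $D$ not covered by \cref{c'split}--\cref{c'Interval} has a very restricted structure, and then to recognise each such diagram as one of the eight exceptional diagrams of \cref{fig:SpecialCases} (or obtained from one by deleting Seifert circles or crossings). Concretely, \cref{claim:regions} tells us that the only Seifert circles are: at most one inside $O_1$ and at most one inside $O_2$; every such Seifert circle has exactly two positive crossings on it; and in each of $U_1,U_2$ there is at most one crossing between $s_1$ and $s_2$. So the data determining $D$ is finite: whether $O_1$ contains a Seifert circle, whether $O_2$ does, how many crossings ($0$ or $1$) lie in each $U_i$ between $s_1$ and $s_2$, and — for each Seifert circle present — to which generalized Seifert circles its two positive crossings connect and on which side.

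First I would dispose of the crossings between $s_1$ and $s_2$. Besides $c_+$ and $c_-$ (which are parallel, hence — after possibly swapping over $c_-$ or $c_+$, or after an application of \cref{c'nextto} which is excluded — lie in, say, the two regions $O_1,O_2$ or are configured in the standard way of \cref{fig:s1s2}), the remaining $s_1$--$s_2$ crossings sit one per $U_i$. By the swapping move (which does not change $\Sigma'(D)$) and by excluding \cref{c'nextto}, one checks these extra crossings cannot be eliminated or absorbed, so there are at most two of them, and their presence or absence is one of the finitely many discrete choices. Next I would analyse a Seifert circle $k$ in $O_1$: it carries exactly two positive crossings, and each connects $k$ either to $s_1$, to $s_2$, or to the (at most one) Seifert circle in $O_1$; the key point is that if $k$'s two crossings connected it to generalized Seifert circles that are oriented coherently with each other, or if one could run a disjoint arc from $k$ to another coherently oriented circle, then \cref{c'Interval} would apply — so the combinatorial possibilities collapse drastically. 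The same analysis applies to $O_2$.

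Carrying this bookkeeping out, every surviving configuration is visibly one of $D_1,\dots,D_8$ in \cref{fig:SpecialCases}, up to the allowed deletions of Seifert circles or crossings (deletions account for the cases where $O_1$ or $O_2$ is empty, or where a $U_i$ contains no $s_1$--$s_2$ crossing). Hence \cref{c'specalcase} applies.

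The main obstacle I anticipate is the combinatorial case analysis itself: making sure the enumeration of how the (at most two) Seifert circles attach to $\{s_1,s_2\}$ and to each other is genuinely exhaustive, and that in each case where \cref{c'Interval} or \cref{c'nextto} does \emph{not} apply one really lands in \cref{fig:SpecialCases}. In particular one must be careful about the interaction of the coherent-orientation condition in \cref{c'Interval}(c) with the orientations forced by the $U_i$/$O_i$ decomposition, and about the role of the swapping move in normalising the positions of crossings relative to $c_+$ and $c_-$. I expect this to be a finite but somewhat delicate check, best organised by first fixing the number of Seifert circles ($0$, $1$, or $2$) and the number of extra $s_1$--$s_2$ crossings, and then within each bucket ruling out all but the listed diagrams.
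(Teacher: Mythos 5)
Your overall strategy matches the paper's: invoke \cref{claim:regions} to reduce to a finite family of diagrams, then check that every surviving configuration is one of $D_1,\ldots,D_8$ or a subdiagram thereof. However, the proposal stops short of doing the enumeration that constitutes the actual content of the lemma, and it slightly misidentifies the mechanism that makes the count manageable. You point to \cref{c'Interval} as the tool that ``collapses'' the possibilities, but the orientation/arc constraints of \cref{c'Interval} have already been exploited in the proof of \cref{claim:regions}; once you are down to the situation of (i)--(iv), the remaining pruning in the paper is carried out via \cref{c'nextto}, crucially including the version that allows swapping a crossing over $c_\pm$. Concretely, the paper first fixes the endpoints of the (at most one) crossing in each $U_i$, observes that the two crossings on the Seifert circle in $O_1$ can be placed in exactly four ways so that they are not next to each other (otherwise \cref{c'nextto} applies), and likewise for $O_2$. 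This gives $16$ diagrams in the maximal case (crossings in both $U_i$, circles in both $O_i$); $12$ of these have a crossing in $U_1$ next to one in $U_2$ after swapping, so \cref{c'nextto} disposes of them and leaves $D_1,\ldots,D_4$. The case with two Seifert circles but a crossing in only one $U_i$ gives $8$ diagrams, of which $4$ are subdiagrams of $D_1,\ldots,D_4$, leaving $D_5,\ldots,D_8$; everything else is obtained by further deletions.

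So the gap is not a wrong approach but a missing core step: the bookkeeping you flag as ``delicate'' is exactly what the proof must deliver, and without the two counting observations above (the four non-next-to-each-other placements per $O_i$, and the elimination of $12$ of $16$ via \cref{c'nextto} with swapping) there is no argument that the list $D_1,\ldots,D_8$ is complete. Also note a small imprecision: a Seifert circle $k\subset O_1$ cannot be adjacent to another Seifert circle in $O_1$ (by \cref{claim:regions}(ii) it is the only one there and by exclusion of \cref{c'murasugi} it has empty interior), so both of its crossings go to $s_1$ or $s_2$; your phrase ``or to the (at most one) Seifert circle in $O_1$'' should be dropped.
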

\begin{proof}
By \cref{claim:regions}, it suffices to consider diagrams satisfying i)--iv).

First we consider the case where $D$ has a crossing in both $U_1$ and $U_2$ and $O_1$ and $O_2$ each contain a Seifert circle.  Once one has fixed the endpoints of the crossings in $U_1$ and $U_2$, there are four possibilities for how the two crossings adjacent to the unique Seifert circle in $O_1$ can lie without being next to each other (as otherwise~\cref{c'nextto} applies to $D$).
Similarly, there are four possibilities for how the two crossings adjacent to the Seifert circle in $O_2$ can lie without being next to each other.

Thus in this case there are 16 diagrams satisfying i)--iv). However, in 12 of these diagrams there is a crossings in $U_1$ that is next to a crossing of $U_2$ (using the notion of next to each other that uses swapping).  Hence for these 12 diagrams \cref{c'nextto} applies.  The four remaining diagrams, which we denote by $D_1$, $D_2$, $D_3$, and $D_4$, are indicated in \cref{fig:SpecialCases}.

Next we consider the case where $D$ has a crossing in exactly one of $U_1$ or $U_2$, and two Seifert circles.  There are eight such diagrams satisfying i)--iv). Four of these arise by deleting a crossing in one of the diagrams $D_1$, $D_2$, $D_3$, and $D_4$. The other four, denoted by $D_5$, $D_6$, $D_7$, and $D_8$, are indicated in \cref{fig:SpecialCases}.

Finally, it is easy to see that any case not yet considered is obtained from at least one of the $D_i$ by deleting Seifert circles or crossings.
\end{proof}

\begin{proof}[Proof of \cref{claim:regions}] $\,$

\begin{enumerate}[label=\textbf{\roman*)},leftmargin=1.9em]
\item
Assume towards a contradiction that there is at least one Seifert circle, say $k_1$, in $U_i$.  There is a crossing $c$ connecting $k_1$ to a different generalized Seifert circle~$k$. We distinguish two cases.

\textbf{Case 1:} Assume that on $k$ there is a crossing $c'$ %
next to $c$.
Let $k_2$ be the generalized Seifert circle adjacent to $c'$ that is not $k$; see \cref{fig:U1x}.

\begin{figure}[hbt]
	\labellist
	\tiny
	\pinlabel {$k$}       at 960 -80
	\pinlabel {$k_1$}     at 400 -80
	\pinlabel {$k_2$}     at 1390 900
	\pinlabel {$c$}       at 690 120
	\endlabellist
	\includegraphics[scale=0.075]{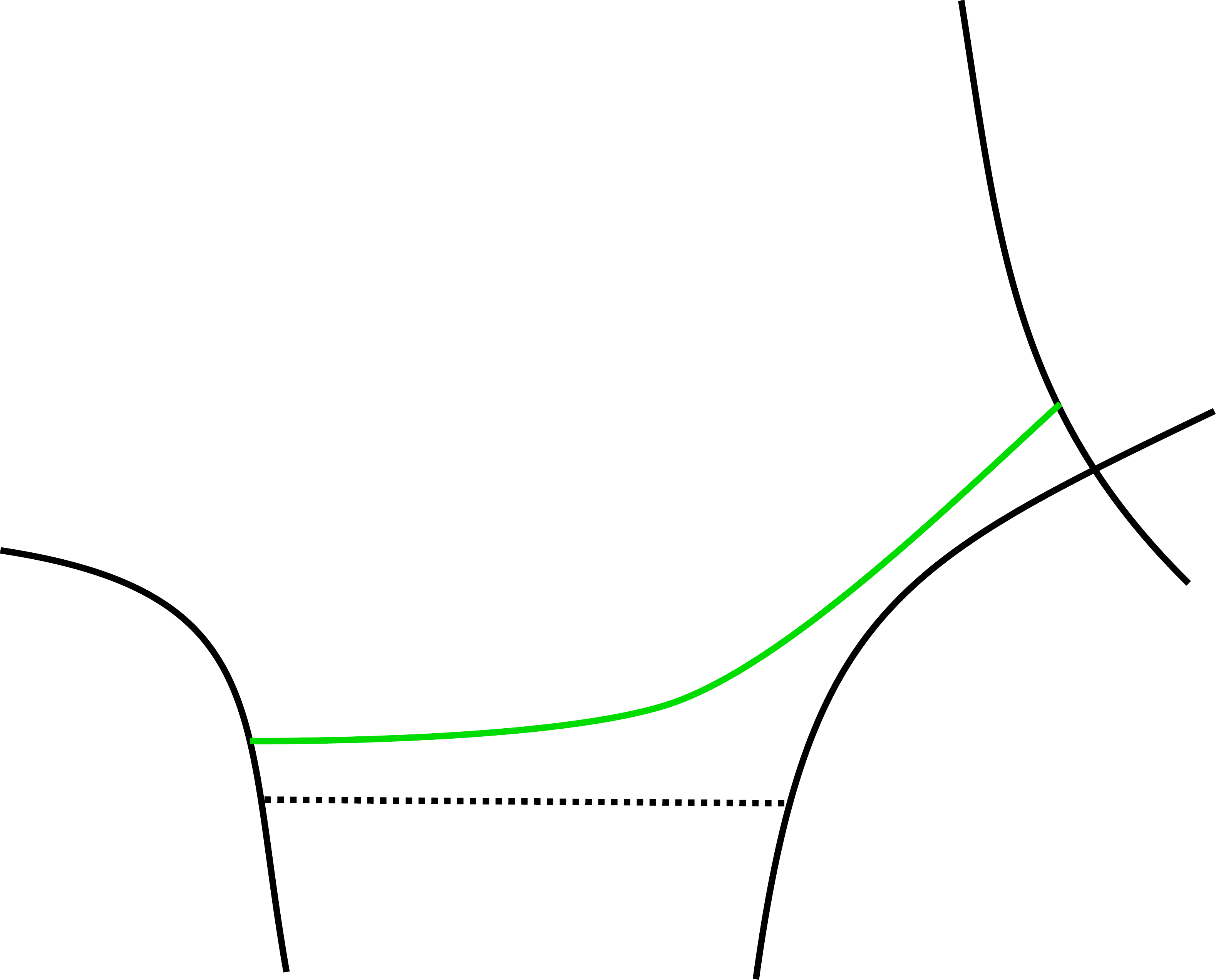}
	\caption{
$k$ is $s_1$ or $s_2$ and $c$ is next to one of the two intersection points of $s_1$ and $s_2$ (bottom).}
	\label{fig:U1y}
\end{figure}

If $k_1\neq k_2$, then \cref{c'Interval} applies (see green interval in \cref{fig:U1x}(left)), hence we obtain a contradiction.
Thus we have that $k_1=k_2$. The crossings $c'$ and $c$ (compare \cref{fig:U1x}) are next to each other on $k$, which implies that there must be another crossing $c''$ adjacent to $k_2$ that is between $c$ and $c'$, since otherwise the crossings $c$ and $c'$ are next to each other (this uses that $D$ is not split). Let $k_3$ be the other Seifert circle adjacent to $c''$. Now \cref{c'Interval} applies (see green interval in \cref{fig:U1x} (right)), hence we obtain a contradiction.

\textbf{Case 2:} Assume that on $k$ there is no crossing next to $c$. This implies that $k$ is $s_1$ or $s_2$ (if $k$ were a Seifert circle, having no crossing next to $c$ on $k$ would imply that $c$ is the only crossing on one side of $k$, thus $c$ would be nugatory).

Thus, \cref{c'Interval} applies (see green interval in \cref{fig:U1y}), contradiction.

\item
Assume towards a contradiction that there are at least two Seifert circles in $O_1$ (without loss of generality).

\textbf{Case 1:} Assume that inside $O_1$ there are two distinct crossings $c$, $c'$ adjacent to $s_i$ for some $i$.  Calling the Seifert circle $k_1$ that is also adjacent to $c$, we can now argue verbatim as in Case 1 of i) above.

\textbf{Case 2:} Assume that inside $O_1$ there is at most one crossing adjacent to $s_1$ and at most one crossing adjacent to $s_2$.  Then, to avoid nugatory crossings, we know that there is exactly one crossing $c_1$ adjacent to $s_1$ and exactly one crossing $c_2$ adjacent to $s_2$.

Call the Seifert circle $k_1$ that is also adjacent to $c_1$.  If $k_1$ is adjacent to no other crossings then $c_1$ is nugatory.  If $k_1$ is adjacent to $c_1$, to $c_2$, and to no other crossing then either the diagram is disconnected, $k_1$ has non-empty interior, or there is no other Seifert circle in $O_1$.  If $k_1$ is adjacent to some crossing different from $c_1$ and $c_2$, then pick $c\not=c_2$ to be a crossing next to $c_1$ on $k_1$.  Then there is an arc connecting $s_1$ to the other Seifert circle adjacent to $c$.

\item
Let $i \in \{ 1 , 2 \}$.  Let $k$ be a Seifert circle in $O_i$. All crossings adjacent to $k$ are adjacent to $s_1$ or $s_2$ since there are no other Seifert circles in $O_i$ by ii).  If there are at least three crossings, then two of them are adjacent to the same $s_j$ and are next to each other since there are no other Seifert circles in $O_i$ by ii).

\item
Let $i \in \{ 1 , 2 \}$.  If there are two or more crossings in $U_i$, then two of them are next to each other since there are no Seifert circles in $U_i$ by i).\qedhere
\end{enumerate}%
\end{proof}

It remains to show that for the diagrams $D_i$ for $i \in \{ 1, \ldots, 8\}$ given in \cref{fig:SpecialCases}, $\Sigma'(D_i)$ is a quasipositive surface. This implies that $\Sigma'(D)$ is a quasipositive Seifert surface for any diagram $D$ obtained from some $D_i$ by deleting crossings or Seifert circles.   (Because in this case $\Sigma'(D)$ is an incompressible subsurface of $\Sigma'(D_i)$ and thus a quasipositive Seifert surface by~\cref{eq:subsurface}.)

\begin{lemma}
	\label{lem:special_cases_are_qp}
The Seifert surface $\Sigma'(D_i)$ is quasipositive for all $i \in \{ 1, \ldots , 8 \}$.
\end{lemma}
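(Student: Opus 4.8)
The plan is to treat the eight diagrams $D_1,\dots,D_8$ of \cref{fig:SpecialCases} individually, exhibiting for each a concrete certificate of quasipositivity of $\Sigma'(D_i)$. Since each $D_i$ has very few crossings (two crossings besides $c_+$ and $c_-$, together with the generalized Seifert circles $s_1$, $s_2$, and one or two nested Seifert circles), the surface $\Sigma'(D_i)$ has small first Betti number, so a direct approach is feasible. The cleanest route is to identify each $\Sigma'(D_i)$ as an incompressible subsurface of the fiber surface of an explicit positive torus link---equivalently (invoking Rudolph's equivalence quoted in the introduction) to write down a strongly quasipositive braid whose associated Bennequin surface contains $\Sigma'(D_i)$ incompressibly. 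In practice I would not start from scratch for each: I would first isotope $\Sigma'(D_i)$, using the freedom in the choice of $z$-coordinates noted after the construction and the swapping moves (which do not change the isotopy type of $\Sigma'$), into a standard position, and then recognize it.

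The key steps, in order, are as follows. First, normalize: for each $i$, apply `moving infinity' and crossing swaps to put $D_i$ into a canonical form, so that the two twice-intersecting curves $s_1,s_2$ and the at most two nested Seifert circles sit in the standard configuration of \cref{fig:s1s2}, and record explicitly where the remaining two crossings attach. Second, build the surface: place the disks $d_i$ at the prescribed heights (item~\eqref{item:1} forces the $s_1$-disk above the $s_2$-disk, item~\eqref{item:2} fixes the nested disks) and glue in the four twisted bands (two from $c_\pm$, two from the remaining crossings). Third, read off a band presentation: cut $\Sigma'(D_i)$ along arcs to display it as a disk with bands, and observe that, after the height normalization, every band is a positive band in the sense of Rudolph---i.e.\ the core of the ribbon, pushed to the braid axis picture, is a positively embedded band $\sigma_{j,k}$. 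The contribution of $c_-$ is the delicate point: the negative crossing does \emph{not} directly give a positive band, but because $c_-$ is parallel to $c_+$ and attaches between $s_1$ and $s_2$ (oriented oppositely), the pair of bands coming from $c_+$ and $c_-$ can be slid past each other so that the algebraic count works out; this is exactly the phenomenon that \cref{lem:nextoisplumbing} exploits, and in these eight base cases it must be checked by hand. Fourth, conclude: a Bennequin surface of a positive band word is a quasipositive Seifert surface, and $\Sigma'(D_i)$ is obtained from it by (possibly) discarding handles, i.e.\ sits inside it incompressibly, so \cref{eq:subsurface} finishes the argument.

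I expect the main obstacle to be Step three for the diagrams among $D_1,\dots,D_4$ that have a crossing in each of $U_1$ and $U_2$ together with a nested Seifert circle in each of $O_1$ and $O_2$: these are the most ``twisted'' configurations, and the interaction between the $c_-$-band, the $c_+$-band, and the two bands meeting the nested circles has to be tracked carefully to see that no genuine negativity survives. Concretely, the worry is that after all slides the word still contains a letter $\sigma_j^{-1}$; ruling this out amounts to a finite but fiddly sequence of band slides and destabilizations. A secondary nuisance is bookkeeping the orientations: one must be careful that the ``coherently oriented'' hypotheses used implicitly in the construction hold in each base case, since a sign error there would be invisible until the very end. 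Once a clean normal form for these four hardest diagrams is found, $D_5,\dots,D_8$ (which have a crossing in only one $U_i$) and all their deletions follow as sub-cases by the incompressible-subsurface remark preceding the lemma, so no separate work is needed for them.
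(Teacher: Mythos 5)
Your plan and the paper's proof diverge substantially. You propose to isotope each $\Sigma'(D_i)$ into a disk-with-bands form and verify directly, by a finite sequence of band slides, that the resulting band word is a positive Rudolph band word. The paper instead never computes a band presentation. For the two ``hardest'' base cases it identifies the boundary link $L_i$ explicitly (positive trefoil with a meridian for $D_1$; unknot with two positive meridians for $D_6$), observes that $L_i$ bounds a known quasipositive \emph{fiber} surface $F$ obtained as a Murasugi sum of positive Hopf bands and a torus-knot fiber, notes that $\chi(F)=\chi(\Sigma'(D_i))$, and then invokes uniqueness of the Euler-characteristic-maximizing Seifert surface for a fibered link to conclude $\Sigma'(D_i)$ is isotopic to $F$. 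The remaining six cases are reduced to $D_1$, $D_6$ or to positive diagrams by swaps, rotations, and \cref{lem:nextoisplumbing}. This is a genuinely different and considerably more economical route: it trades a fiddly band-word computation for a soft recognition argument, and it sidesteps entirely the issue of tracking the negative crossing through slides.

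More importantly, as written your argument has a gap: the third step is the entire content of the lemma, and you explicitly defer it (``in these eight base cases it must be checked by hand,'' ``a finite but fiddly sequence of band slides and destabilizations''). You have identified where the difficulty lies, but you have not resolved it. A plan that says ``normalize, then check the band word is positive'' does not establish that the band word \emph{is} positive; and the interaction of the $c_-$-band with the $c_+$-band and the ribbons on the nested disks, which you rightly flag as the crux, is exactly what requires a concrete argument. In addition, the phrase ``$\Sigma'(D_i)$ is obtained from it by (possibly) discarding handles'' inverts the logic you need: to apply the incompressible-subsurface criterion you must exhibit $\Sigma'(D_i)$ \emph{inside} some quasipositive surface, not delete handles from $\Sigma'(D_i)$; if your band word for $\Sigma'(D_i)$ itself is already positive you are done without any subsurface argument, and if it is not you cannot simply discard the offending band. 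Finally, a small factual slip: $D_5,\ldots,D_8$ carry only one crossing besides $c_\pm$, not two.
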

\begin{proof}%
We discuss each of the surfaces $\Sigma'(D_i)$ in turn.

\begin{itemize}[leftmargin=.9em]
\item $\mathbf{\Sigma'(D_1).}$ We write  $L_1$ for the boundary of $\Sigma'(D_1)$.  Note that $L_1$ has two components and further note that $\Sigma'(D_1)$ has Euler characteristic $\chi(\Sigma'(D_1)) = -2$.  Hence $\Sigma'(D_1)$ is a twice punctured surface of genus 1.

By inspection, $L_1$ is the two component link consisting of the positive trefoil and a meridian positively linking the trefoil.

Note that the link $L_1$ is the boundary of the surface $F$ given as the connected sum (a special case of a Murasugi sum) of the fiber surface of the positive trefoil $F_{2,3}$ and the positive Hopf band $F_{2,2}$.  In particular, $F$ is a fiber surface (since Murasugi sum preserves fiberedness) for $L$
with Euler characteristic $-2$.
Since $F$ is a fiber surface, it is the unique %
Euler characteristic maximizing Seifert surface for~$L_1$; therefore $\Sigma'(D_1)$ is isotopic to $F$.  However, $F$ is a quasipositive Seifert surface by \cref{eq:murasugi} since it is the Murasugi sum of the two quasipositive Seifert surfaces $F_{2,3}$ and $F_{2,2}$.

\item $\mathbf{\Sigma'(D_2).}$ This is seen to be isotopic to $\Sigma'(D_1)$, for example via %
rotation about the vertical axis in the plane.

\item $\mathbf{\Sigma'(D_3).}$ %
We consider a diagram move (similar to swapping a crossing) indicated in \cref{fig:swappingCircletocrossing} that swaps a Seifert circle with two adjacent crossings into a crossing and vice versa.
\begin{figure}[ht]
\begin{center}
	\labellist
	\tiny
	\pinlabel {$O_1$} at 2950 850
	\pinlabel {$O_2$} at 4200 850
	\pinlabel {$c_+$} at 3550 650
	\endlabellist
\includegraphics[width=.9\textwidth]{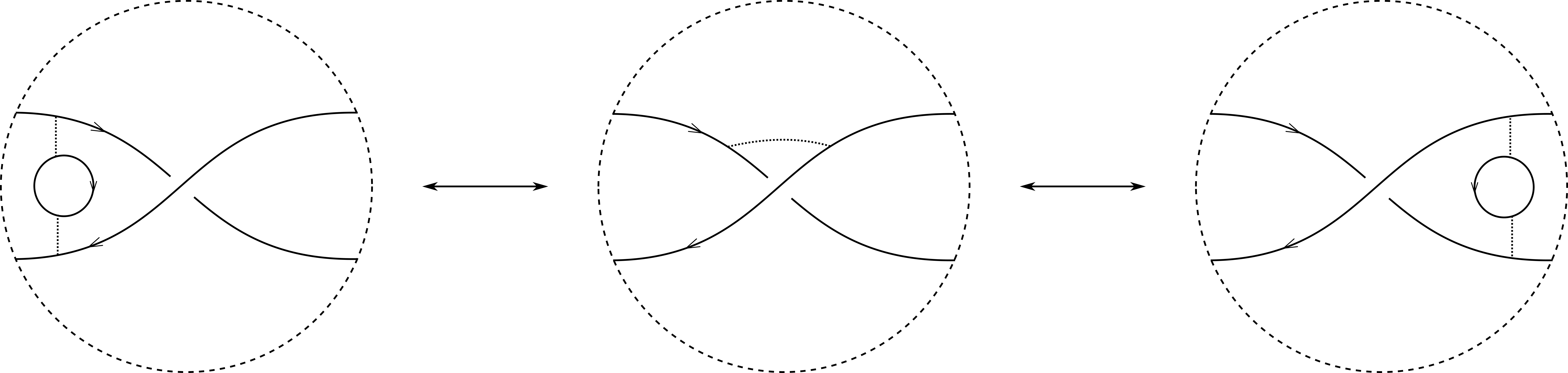}
\end{center}
\caption{Left-to-middle and right-to-middle: swapping a Seifert circle over $c_+$ into a crossing.
\newline
Left-to-right: swapping a Seifert circle from $O_1$ over $c_+$ into a Seifert circle in $O_2$.}
\label{fig:swappingCircletocrossing}
\end{figure}
Note that $\Sigma'(D)=\Sigma'(D')$ for diagrams $D$ and $D'$ that are related by this move.

We apply this move (\cref{fig:swappingCircletocrossing}(left-to-middle)) to $D_3$: swap the Seifert circle in $O_1$ over $c_+$ into a crossing in $U_2$ to get a diagram $D_3'$ with two crossings in $U_2$ and one crossing in $U_1$. One of the crossings in $U_2$ is next to the other crossing in $U_2$ and also next to the crossing in $U_1$.
By~\cref{lem:nextoisplumbing}, $\Sigma'(D_3')$ is quasipositive if and only if $\Sigma'(D_3'')$ is quasipositive, where $D_3''$ is the diagram obtained from $D_3'$ by deleting the crossing in $U_1$ and one of the crossings in $U_2$.

Finally, we observe that $\Sigma'(D_3'')$ is a quasipositive surface. This follows since $D_3''$ can be obtained from $D_1$ by deleting crossings and Seifert circles establishing that $\Sigma'(D_3'')$ is an incompressible subsurface of the quasipositive surface $\Sigma'(D_1)$.

\item $\mathbf{\Sigma'(D_4).}$
This is seen to be isotopic to $\Sigma'(D_3)$, for example via %
rotation about the vertical axis in the plane.

\item $\mathbf{\Sigma'(D_5).}$ %
First use the move depicted in \cref{fig:swappingCircletocrossing}(middle-to-left) to swap the crossing in $U_2$ over $c_+$ to result in a diagram with two Seifert circles in $O_1$.  Then swap the Seifert circle in $O_2$ over $c_+$ to $O_1$ using the move depicted in \cref{fig:swappingCircletocrossing}(right-to-left) resulting in a diagram $D_5'$ with three Seifert circles in $O_1$.

The Seifert surface $\Sigma'(D_5')$ is isotopic to $\Sigma'(D_5)$ and is easily seen to be the Seifert surface of a positive diagram of the $(-2,-2,-2)$--pretzel link, and hence quasipositive.

\item $\mathbf{\Sigma'(D_6).}$ The surface $\Sigma'(D_6)$ is a thrice-punctured sphere since it has Euler characteristic $-1$ and its boundary is a three component link $L_6$.

By inspection, the link $L_6$ is the three-component link given as an unknot with two parallel positively linked meridians. Thus, we note that $L_6$ is the boundary of the Seifert surface $S$ of Euler characteristic $-1$ given as the connected sum of two positive Hopf bands. As for $\Sigma'(D_1)$, we conclude that $S$ is a quasipositive Seifert surface that is isotopic to $\Sigma'(D_6)$. Thus, $\Sigma'(D_6)$ is a quasipositive Seifert surface.%

\item $\mathbf{\Sigma'(D_7)}$. The surfaces $\Sigma'(D_7)$ and $\Sigma'(D_6)$ are isotopic since $D_6$ can be turned into $D_7$ by swapping a crossing.

\item $\mathbf{\Sigma'(D_8)}$. The surfaces $\Sigma'(D_8)$ and $\Sigma'(D_5)$ are isotopic since swapping a crossing in $D_5$ (and then moving the crossing over infinity) turns $D_5$ into~$D_8$.\qedhere
\end{itemize}
\end{proof}

We end the section with the generalization of \cref{lem:plumb} used above.

\begin{lemma}\label{lem:nextoisplumbing}
Let $D$ be a diagram with two marked crossings of opposite sign $c_-$ and $c_+$.
If two positive crossings $c_1$ and $c_2$ are next to each other, then for some $i \in \{1,2\}$, the diagram $D'$ obtained by deleting $c_i$ satisfies:
$\Sigma'(D)$ is a quasipositive Seifert surface if and only $\Sigma'(D')$ is a quasipositive Seifert surface.
\end{lemma}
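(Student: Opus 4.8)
The plan is to identify $\Sigma'(D)$, for a suitable choice of $i\in\{1,2\}$, as the plumbing of $\Sigma'(D')$ with a positive Hopf band, in complete analogy with \cref{lem:plumb}; the claimed equivalence then follows formally. Indeed, the positive Hopf band is a quasipositive Seifert surface (it is the fiber surface of the positive $(2,2)$--torus link), so \cref{eq:murasugi} gives the ``if'' direction; and since a plumbing glues the two summands along a disk, a Mayer--Vietoris computation shows $H_1(\Sigma'(D'))$ injects into $H_1(\Sigma'(D))$, so $\Sigma'(D')$ is an incompressible subsurface of $\Sigma'(D)$ and \cref{eq:subsurface} gives the ``only if'' direction.

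First I would reduce to the case that $c_1$ and $c_2$ are next to each other in the strict sense, that is, without any swapping. This is legitimate because swapping a crossing over $c_+$ or $c_-$ (and likewise the circle-to-crossing moves of \cref{fig:swappingCircletocrossing}) changes $\Sigma'$ only up to isotopy and commutes with the deletion of a different crossing, while swapping $c_i$ and then deleting it is the same as deleting it; so it suffices to prove the statement for the swapped diagram. After this reduction there are generalized Seifert circles $k_1,k_2$, subintervals $I_j\subset k_j$ avoiding $c_+$ and $c_-$, and a region $R\subset\R^2$ with $\partial R\subset c_1\cup c_2\cup I_1\cup I_2$ that contains no generalized Seifert circle and no crossing.

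Next I would examine $\Sigma'(D)$ over a neighborhood of $\overline R$. Since $R$ meets $D$ only along $I_1,I_2,c_1,c_2$, this portion of $\Sigma'(D)$ is built from a subdisk of the disk $d_1$ of $k_1$ (with part of its boundary equal to $I_1$), a subdisk of the disk $d_2$ of $k_2$ (with part of its boundary $I_2$), and the two twisted ribbons of $c_1$ and $c_2$; as $\Sigma'(D)$ is embedded and oriented and both crossings are positive, these four pieces fit together into an embedded annulus $H\subset\Sigma'(D)$. For the correct choice of $i$ --- the one making the local picture match the passage from \cref{fig:plumb}(c) to \cref{fig:plumb}(a) --- deleting $c_i$ exhibits $\Sigma'(D)$ as the plumbing of $\Sigma'(D')$ with $H$, and $H$ is the \emph{positive} Hopf band. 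When $k_1$ and $k_2$ are ordinary, non-nested Seifert circles this is literally \cref{lem:plumb} (and \cref{fig:plumb}), since there the height convention \eqref{item:2} agrees with the usual Seifert height order.

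The step I expect to be the main obstacle is precisely the verification that $H$ is positive (the sign is essential: for a negative Hopf band only the ``only if'' direction would survive) in the remaining configurations, namely when $k_1$ or $k_2$ equals $s_1$ or $s_2$, or when $k_1$ and $k_2$ are nested. Here one must trace through conventions \eqref{item:1} and \eqref{item:2} to pin down the relative height of the two subdisks over $\overline R$ and the $+1$ framings of the two ribbons, and check that they assemble into the positive rather than the negative Hopf band. The saving grace is that this is a bounded finite check: near $\overline R$ the self-intersections of $s_1$ and $s_2$ are invisible, because the intervals $I_j$ avoid $c_\pm$, so the only relevant data are the two band framings and one height comparison, and one runs through the few possibilities (both of $k_1,k_2$ among $s_1,s_2$; exactly one of them; $k_1,k_2$ nested; $k_1,k_2$ coherently versus incoherently oriented across $R$), confirming positivity of $H$ in each case. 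Once this is done, $\Sigma'(D)$ is the plumbing of $\Sigma'(D')$ with a positive Hopf band, and the two implications follow from \cref{eq:murasugi} and \cref{eq:subsurface} as described above.
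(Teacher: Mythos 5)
Your plan---reduce to the unswapped case, inspect $\Sigma'(D)$ over a neighborhood of the region $\overline R$ to find a Hopf band, and case-split on whether the two circles are ordinary, are $s_1,s_2$, or are nested---matches the outline of the paper's proof for the non-nested case. But the nested case is a genuine gap: it is not a ``bounded finite check'' of signs and framings in a neighborhood of $\overline R$. When $k_1$ and $k_2$ are nested (say $k_2$ lies inside $k_1$), the disk $d_{k_1}$ is not locally flat over $\overline R$ in the required way: to see the union of the two ribbons and the two subdisks as a standard plumbed Hopf band, one has to \emph{fold} the outer disk $d_{k_1}$ along the arc of $k_1$ between $c_1$ and $c_2$. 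This fold is obstructed by ribbons attached to $k_1$ on its exterior side --- these are not contained in a neighborhood of $\overline R$, so your local-check framing misses them entirely. To clear them out, the paper first Murasugi-decomposes $\Sigma'(D)$ along $k_1$ into $\Sigma(D_+)\ast\Sigma'(D_-)$ (where $D_+$ is the positive part outside $k_1$), reduces by \cref{eq:murasugi} to showing $\Sigma'(D_-)$ is quasipositive iff $\Sigma'(D_-')$ is, performs the fold there (now harmless, since there are no exterior ribbons on $k_1$ in $D_-$), and finally reassembles. This extra decomposition step is essential, not a convenience; without it the plumbing does not exist as stated.

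A smaller point: your ``only if'' direction via Mayer--Vietoris for the plumbing is circular in the nested case (you'd already need the plumbing structure), and is anyway unnecessary --- $\Sigma'(D')$ is obtained from $\Sigma'(D)$ by deleting a band, so it is tautologically an incompressible subsurface and \cref{eq:subsurface} applies directly, for either choice of $i$. The paper dispatches that direction in one line; only the converse needs the Hopf-plumbing machinery and the careful choice of which $c_i$ to delete.
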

\begin{proof}[Proof of \cref{lem:nextoisplumbing}]
One direction of the Lemma follows immediately since $\Sigma'(D')$ is an incompressible subsurface of $\Sigma'(D)$.
The other direction shall be proven in a similar way as \cref{lem:plumb}.
We may and do assume that $c_1$ and $c_2$ are next to each other without swapping needed (otherwise swap a crossing first and consider the resulting diagram as $D$).
Writing $k$ and $k'$ for the generalized Seifert circles adjacent to $c_1$ and $c_2$, we wish to prove that the local situation
is isotopic to \cref{fig:nexttoplumbing}(a). Then, while there may be twisted ribbons attached to $k$ and to $k'$ between $c_1$ and $c_2$,
either all of those ribbons lie above the disk corresponding to $k$, and below the disk corresponding to $k'$, or vice versa.
So, the Hopf band in \cref{fig:nexttoplumbing}(b) may be plumbed to \cref{fig:nexttoplumbing}(c) such that it does not interfere with the ribbons.

To make this plan work, we distinguish two cases depending on whether $k$ and $k'$ are nested or not;
in each of the cases we pay attention to the possibility that $k$ and $k'$ may be $s_1$ or $s_2$.

\textbf{Case when $k$ and $k'$ are not nested.} It turns out that $D'$ can be chosen such that $\Sigma'(D)$ arises from plumbing a positive Hopf band to $\Sigma'(D')$, which implies that $\Sigma'(D)$ is a quasipositive Seifert surface if and only if $\Sigma'(D')$ is. The argument is more involved version of the proof of \cref{lem:plumb}; in particular, we will have to be careful which of the two crossings $c_1$ and $c_2$ to eliminate in $D$ to obtain $D'$.

We first consider the case that at least one of the generalized Seifert circles $k$ or $k'$  is a Seifert circle (i.e.~not an $s_i$). The situation is as depicted in \cref{fig:nexttoplumbing}(a) (we note that picture only depicts a closed range of $z$--coordinate height---far above or below there could be further disks corresponding to generalized Seifert circles that contain both $k$ and $k'$).
\begin{figure}[ht]
\begin{center}
(a) \includegraphics[width=.27\textwidth]{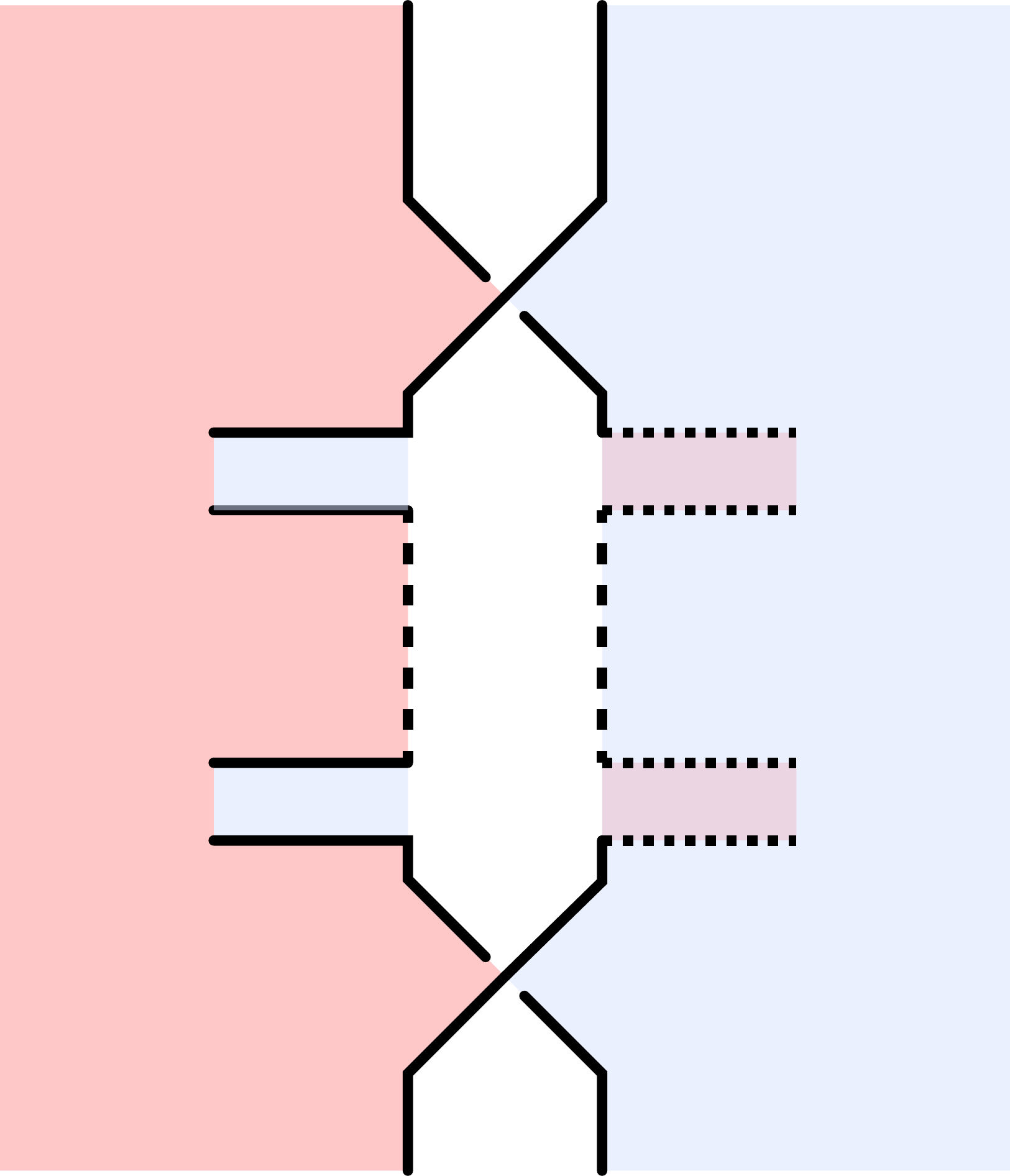}\hfill
(b) \includegraphics[width=.27\textwidth]{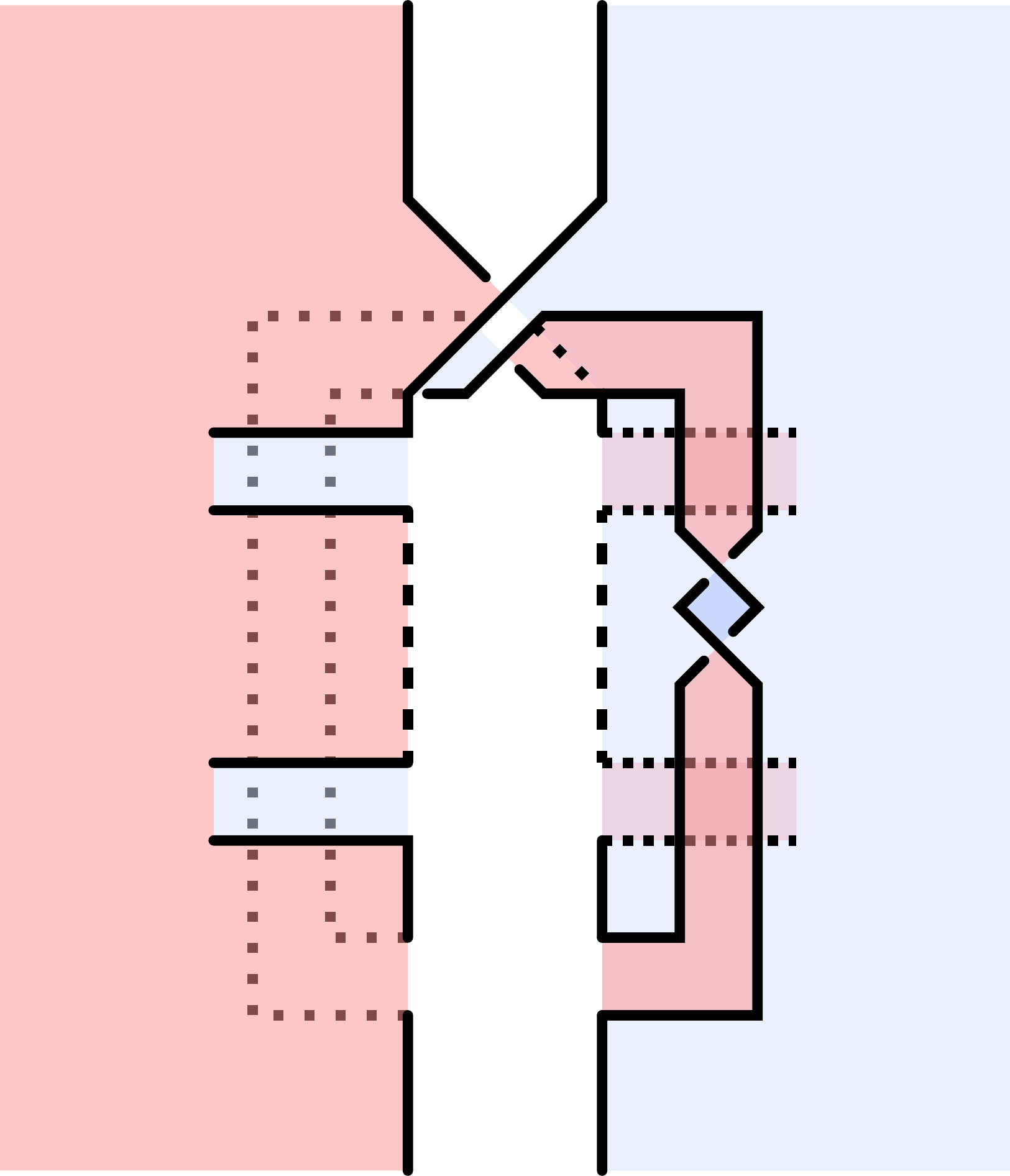}\hfill
(c) \includegraphics[width=.27\textwidth]{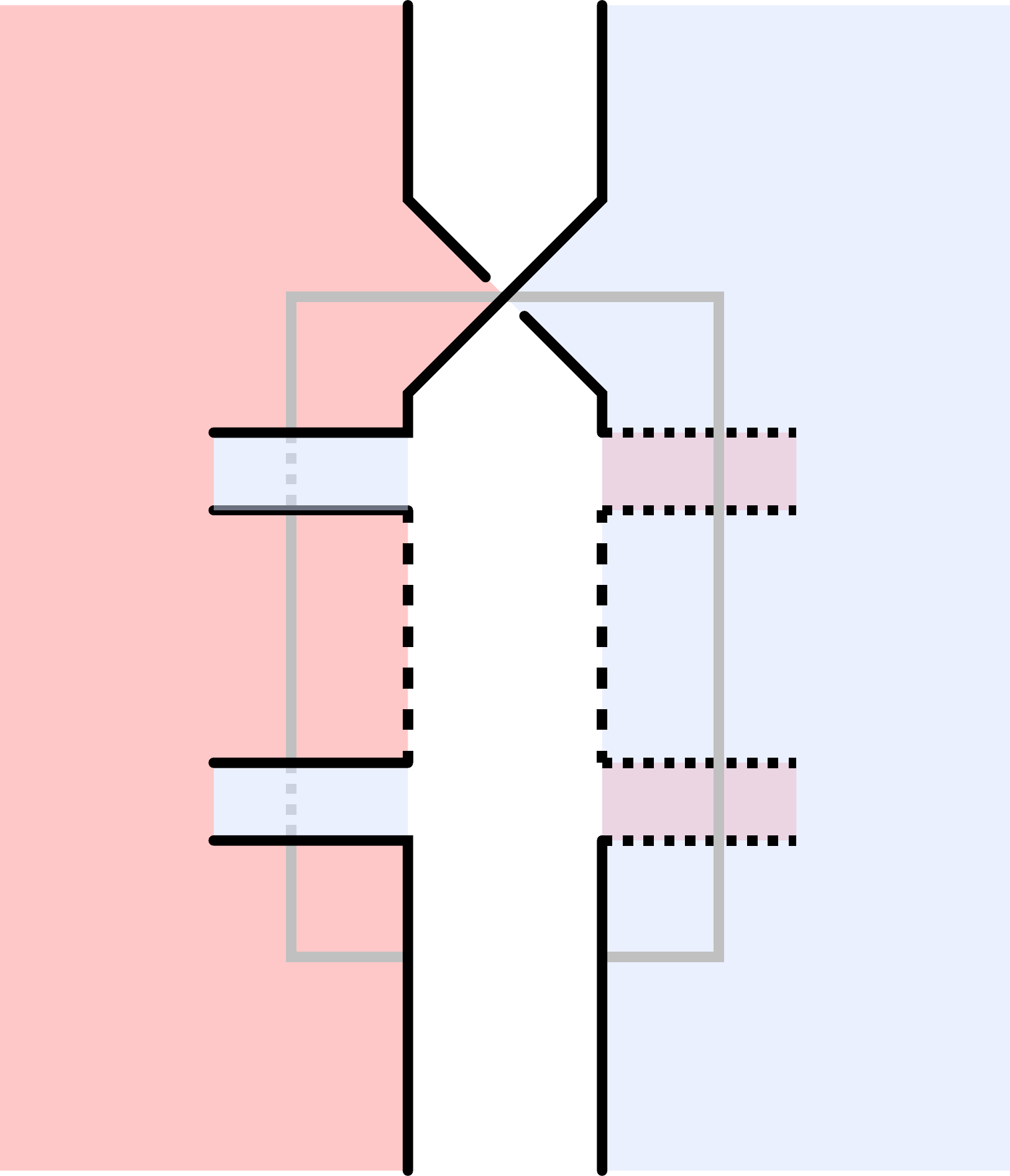}
\end{center}
\caption{
Inserting a positive crossing next to another one by positive Hopf plumbing (generalizing \cref{fig:plumb}).
\newline
\textbf{(a)} Local picture of $\Sigma'(D)$ containing the two ribbons corresponding to two crossings $c_1$ and $c_2$ that are next to each other. Note that the central white region could contain infinity.
\newline
\textbf{(b)} The result of plumbing a positive Hopf band in (c).
\newline
\textbf{(c)} A closed interval (gray) in $\Sigma'(D')$ along which a positive Hopf band gets plumbed to the blue side.
}
\label{fig:nexttoplumbing}
\end{figure}
This is due to the $z$--coordinate convention of nested disks assuring that all the ribbons corresponding to crossings on $k$ and $k'$ to the other side than the $c_i$ are to the positive (red) side of the disks corresponding to $k$ and $k'$.
Then, the surface can be isotoped to be the result (see \cref{fig:nexttoplumbing}(b)) of plumbing a positive Hopf band to the negative (blue) side of $\Sigma'(D')$ (see \cref{fig:nexttoplumbing}(c)), where $D'$ is the diagram obtained by deleting the crossing $c_i$ in $D$ that is depicted at the bottom of \cref{fig:nexttoplumbing}(a).

If instead $k$ and $k'$ are $s_1$ and $s_2$, then the crossings $c_1$ and $c_2$ lie in $U_1$ or $U_2$. If they lie in $U_1$, the situation is again exactly as depicted in \cref{fig:nexttoplumbing}(a).
If instead the crossings $c_1$ and $c_2$ lie in $U_2$, then $\Sigma'(D)$ can be isotoped to look like \cref{fig:nexttoplumbing}(a) by locally pulling the disks corresponding to $s_1$ and $s_2$ apart, so the first is no longer above the second.
Note that, different from the previous cases, the positive side of $\Sigma'(D)$ is depicted as blue and the plumbing of the positive Hopf band happens to the positive (blue) side of $\Sigma'(D')$. Again, here $D'$ is the appropriate diagram obtained from $D$ by deleting either $c_1$ or~$c_2$.

\textbf{Case when $k$ and $k'$ are nested.}

First remark that $k$ or $k'$ is a Seifert circle since $s_1$ and $s_2$ are not nested.
Second, note that either the two $c_i$ and the two $s_i$ all lie to the same side of $k$, or they all lie to the same side of $k'$.
All in all, we suppose w.l.o.g. that $k$ is a Seifert circle and
the $s_i$ and the $c_i$ all lie to the same side of $k$.

We now split $\Sigma'(D)$ as a Murasugi sum along $k$.
Let $D_{-}$ and $D_+$ be the link diagrams so that $D=D_{-} \cup D_+$ and $D_{-} \cap D_+ = k$ (as in \cref{lem:intext}), where we let $D_{-}$ be the link diagram that contains the $s_i$ and the $c_i$.
The simple case that $D_+$ consists only of $k$ and $D_-=D$ is possible.
The Seifert surface $\Sigma'(D)$ is a Murasugi sum of $\Sigma(D_+)$
and $\Sigma'(D_-)$. Since $D_+$ has no negative crossings, $\Sigma(D_+)$ is a quasipositive Seifert surface and, thus, $\Sigma'(D)$ is a quasipositive Seifert surface if and only if $\Sigma'(D_-)$ is by \cref{eq:murasugi}.

We now argue that $\Sigma'(D_-)$ arises by positive Hopf plumbing on $\Sigma'(D_-')$, where $D_-'$ is a diagram obtained from deleting one of the $c_i$ in $D_-$. For this we note that the Seifert surface $\Sigma'(D_-)$ can be isotoped (by folding the disk corresponding to either $k$ or $k'$, which ever contains the other, along the part of its boundary connecting the two ribbons corresponding to $c_1$ and $c_2$) to look like \cref{fig:nexttoplumbing}(a).

In fact, the situation is necessarily simpler as depicted in \cref{fig:nexttoplumbing}(a): on the disk corresponding to $k$ there will be no ribbons leaving between $c_1$ and $c_2$ on either side (red or blue). In other words, the situation is as depicted in \cref{fig:nexttoplumbing}(a) to one side and as depicted in \cref{fig:plumb}(c) on the other side.

So then, as before, $\Sigma'(D_-)$ is the result (see \cref{fig:nexttoplumbing}(b)) of plumbing a positive Hopf band to $\Sigma'(D_-')$ (see \cref{fig:nexttoplumbing}(c)), where $D_-'$ is the appropriate diagram obtained from $D_-$ by deleting one of the $c_i$. We note that both plumbing to the positive side and plumbing to the negative side can occur.

Finally, we set $D'$ to be the union of $D_-'$ and $D_+$. The Seifert surface $\Sigma'(D')$ is a Murasugi sum of $\Sigma(D_+)$ and $\Sigma'(D_-')$ and, thus, $\Sigma'(D')$ is a quasipositive Seifert surface if and only if $\Sigma'(D_-')$ is (by \cref{eq:murasugi}). Therefore, we conclude that $\Sigma'(D)$ is a quasipositive Seifert surface if and only if $\Sigma'(D')$ is, as desired.
\end{proof}

\section{Canonical quasipositive surfaces}
\label{sec:canonical}
Let us start with some graph theoretic concepts.

\begin{definition}
	\label{def:graph_concepts}
	$\,$

\begin{itemize}
\item A \emph{path} $P$ is a sequence $e_1, \ldots, e_n$ of distinct edges in which
$e_i$ has vertices $v_i^1$ and $v_i^2$ such that $v_i^2 = v_{i+1}^1$ and such that
every vertex appears at most twice as endpoint of an edge of $P$.
\item The \emph{length} of such a path $P$ is denoted by $\ell(P) = n$.
\item A \emph{cycle} $C$ is a path as above with $v_n^2 = v_1^1$.
\item A \emph{region} of a plane graph $G$ is a connected component of $\mathbb{R}^2 \setminus G$.
\item A graph $G$ is \emph{2--connected} if it has at least three vertices,
is connected, and the result of removing any vertex is again connected.
\item A \emph{weighted graph} is a graph in which each edge carries either the weight $+1$ or the weight $-1$.  For a collection $E$ of edges of a weighted graph, we denote by $w(E) \in \Z$ the \emph{total weight} of $E$, i.e.\ the  sum of the weights of the edges in $E$.
\end{itemize}
\end{definition}
Our main theorem of this section is the following.

\thmB*

\begin{proof}
A cycle $C$ of $\Gamma(D)$ lifts to a non-null-homologous unknot in $\Sigma(D)$ with framing~$w(C)$.
A tubular neighborhood of that unknot in $\Sigma(D)$ is an annulus with $w(C)$ full twists, and an incompressible subsurface of $\Sigma(D)$. So if $\Sigma(D)$ is quasipositive, then $w(C) > 0$ follows from \cref{eq:subsurface}.
This establishes the necessity of the cycle condition for quasipositivity.

To see that that the cycle condition for quasipositivity is sufficient,
suppose some diagram $D$ has $\Gamma(D)$ satisfying the hypothesis of \cref{thm:2}.

If $D$ is a split diagram, then $\Gamma(D)$ is not connected and the cycle condition can be checked on each connected component individually. Therefore we may and do assume that $D$ is non-split.

If there is a Seifert circle $k$ in $D$ that has non-empty interior and exterior, then $\Sigma(D)$ may be expressed as the Murasugi sum of some $\Sigma(D_i)$ and $\Sigma(D_e)$ (see \cref{lem:intext}). Since $\Gamma(D_i)$ and $\Gamma(D_e)$ are both subgraphs of $\Gamma(D)$, we conclude that \cref{thm:2} follows from considering diagrams where each Seifert circle either has empty interior or empty exterior.

By moving infinity we move to a different diagram but with an isotopic canonical surface.  So, by possibly moving infinity, we may and do assume that every Seifert circle of $D$ has empty interior.

Hence we have reduced the proof to \cref{prop:main_theorem}.
\end{proof}

\begin{proposition}
	\label{prop:main_theorem}
	If $D$ is a non-split link diagram such that every Seifert circle of $D$ has empty interior, and such that all cycles of $\Gamma(D)$ have positive total weight, then $\Sigma(D)$ is quasipositive.
\end{proposition}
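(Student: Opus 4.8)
The plan is to run an induction modeled on the proof of \cref{prop:CanSurfType1} — on the sum of the number of Seifert circles and the number of crossings of $D$ — but now propagating the hypothesis that every cycle of $\Gamma(D)$ has positive total weight in place of the type~I condition.

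First I would dispatch the configurations that allow passage to a strictly simpler diagram without destroying the cycle condition, largely reusing the cases from the proof of \cref{prop:CanSurfType1}. If $D$ is a single circle, $\Sigma(D)$ is a disk; if $D$ is split, peel off a disjoint summand; a nugatory crossing is untwisted by a Reidemeister~I move, and since the corresponding edge of $\Gamma(D)$ is a bridge — hence on no cycle — the cycle condition survives in the simplified diagram. If some Seifert circle has non-empty interior and exterior, \cref{lem:intext} exhibits $\Sigma(D)$ as a Murasugi sum of two canonical surfaces whose Seifert graphs are subgraphs of $\Gamma(D)$, to which the cycle condition restricts, and \cref{eq:murasugi} concludes. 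Finally, under our hypothesis two parallel crossings can be neither of opposite sign (a cycle of length~$2$ and weight~$0$) nor both negative (weight~$-2$), so any two parallel crossings that are next to each other are both positive, and \cref{lem:plumb} presents $\Sigma(D)$ as a positive Hopf plumbing onto a smaller canonical surface; while a Seifert circle of degree~$2$ meeting a positive and a negative crossing is removed by a Reidemeister~II move (as in the proof of \cref{prop:CanSurfType1}), again leaving $\Sigma(D)$ unchanged up to isotopy and $\Gamma(D)$ replaced by a subgraph. So we reduce to the case that, after possibly moving infinity, $\Gamma(D)$ is a $2$--connected simple plane bipartite graph with these cheap local configurations eliminated.

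In the remaining case the workhorse is the $1$--handle move: if there is an embedded interval in the plane whose interior is disjoint from $D$, whose endpoints lie on two distinct coherently oriented Seifert circles $k_1,k_2$ (equivalently, on circles in the same part of the bipartite graph $\Gamma(D)$), and which is \emph{admissible} in the sense that the diagram $D'$ obtained by attaching a $1$--handle along it still has all cycles of $\Gamma(D')$ of positive total weight, then $D'$ has one fewer Seifert circle, so $\Sigma(D')$ is quasipositive by induction, and $\Sigma(D)$ is an incompressible subsurface of $\Sigma(D')$, hence quasipositive by \cref{eq:subsurface}. As in the exhaustiveness argument for \cref{prop:CanSurfType1}, a candidate interval is produced by walking around a suitable vertex — beginning, when a negative crossing is present, from a negative edge — until one reaches two circles bordering a common region of the plane graph $\Gamma(D)$.

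The step I expect to be the main obstacle is proving that the cycle condition forces this procedure to return an \emph{admissible} interval. The subtlety is that $\Gamma(D')$ is $\Gamma(D)$ with $k_1$ and $k_2$ identified, so every path from $k_1$ to $k_2$ in $\Gamma(D)$ becomes a cycle of $\Gamma(D')$; admissibility thus amounts to every such path having positive — hence, as $k_1$ and $k_2$ are coherently oriented, at least~$2$ — total weight, a global constraint rather than the purely local condition that sufficed in the type~I case. One must therefore choose the walk so as to avoid both a low-weight path running directly between the chosen circles and, in particular, any short negative cycle that the identification would create (for instance, the merge must not make two negative edges parallel). This is where the positivity of \emph{every} cycle of $\Gamma(D)$ — not merely of its faces — is used, and I expect the argument to require a careful combinatorial analysis of how negative edges can sit inside a $2$--connected plane bipartite graph satisfying the cycle condition, handling in particular the degree-$2$ vertices that none of the earlier moves removes.
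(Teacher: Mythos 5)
Your setup and reductions are sound and mirror the paper's preliminaries, and you have correctly located the crux: one must show the cycle condition forces the existence of a compatible splitting move. But the proposal stops exactly where the real work begins. Establishing that such a move always exists is the content of \cref{prop:weight_4_region_exists_really,prop:weight_4_are_splittable} together with the supporting \cref{prop:not_positive_region,prop:positive_region,lem:intersecting_lemma,lem:paths_of_low_weight_are_verboten,lem:posvert,lem:weight_4_region_exists,lem:wicked_vertices_exist}, which constitute the bulk of \cref{sec:canonical}. Deferring that combinatorial analysis as a remark about what you ``expect'' to be needed means the proposal does not actually prove the proposition.

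There is also a more specific structural problem. The move you use --- attaching a single $1$--handle, i.e.\ merging two Seifert circles at distance $2$ along a face of $\Gamma(D)$ --- is strictly weaker than what the paper uses, and there is no reason it always suffices. The paper's move takes $v,w$ at \emph{any} distance $d\ge 2$ along a boundary cycle $C$ and inserts a chord consisting of a path of $(d-2)$ positive edges between them inside the region; the $d=2$ merge is only the degenerate case. With $d=2$ one needs every path from $v$ to $w$ in $\Gamma(D)$ to have weight $\ge 2$, but the cycle condition does not guarantee such a pair exists on $C$: a path of weight $0$ (e.g.\ one positive and one negative edge) between two vertices at distance $2$ on $C$ is perfectly compatible with every cycle being positive, yet blocks the merge. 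Allowing $d>2$ relaxes the requirement to weight $\ge 4-d$, which is exactly what the paper's argument exploits (in \cref{prop:not_positive_region} the chosen $d=\ell(P)$ is greater than $2$ whenever $C$ carries a negative edge). However, the chord move for $d\ge 3$ \emph{adds} crossings and Seifert circles, so it increases your induction measure. The paper therefore orders diagrams not by the sum of Seifert circles and crossings but by the sequence $f(\Gamma(D))$ recording face-lengths, compared by rightmost nonzero difference, with base case all faces of length $2$ (positive torus links); under this ordering the chord move is strictly decreasing. So to carry your plan through you would need to replace both the inductive move and the induction measure, and then actually prove the existence of a splittable face --- none of which the proposal supplies.
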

Recall from the paragraph after \cref{thm:2} in \cref{sec:almost1} that if $D$ is a link diagram with all Seifert circles having empty interior, then its Seifert graph $\Gamma(D)$ is naturally a plane graph, while also being bipartite and weighted.  All graphs considered in this section will be bipartite weighted plane graphs.

\begin{proof}[Proof of \cref{prop:main_theorem}]
Let $D$ be a link diagram satisfying the hypothesis of the proposition. Since $D$ is non-split, $\Gamma(D)$ is connected.

Suppose that $D$ is a connected sum of diagrams $D_1$ and $D_2$, each with at least one crossing.  Then $\Sigma(D)$ is a connected sum (a special case of a Murasugi sum) of $\Sigma(D_1)$ and $\Sigma(D_2)$ and it suffices to consider the summands by \cref{eq:murasugi}.  Therefore we only consider diagrams that are not such connected sums.

If $D$ has only one Seifert circle, $\Sigma(D)$ is a disk, which is quasipositive.  Similarly if $D$ has only two Seifert circles and only one crossing.

If $D$ has two Seifert circles and $n \geq 2$ crossings, then each crossing is positive since otherwise the cycle positivity condition of $\Gamma(D)$ would be violated.  Then we see that
$\Sigma(D)$ is the fiber surface of the positive $(2,n)$--torus link, which is quasipositive.
This case will be the root case of a proof by induction.

We assume now that $D$ has at least three Seifert circles. Since $D$ is not a non-trivial connected sum, $\Gamma(D)$ is 2--connected (see \cref{def:graph_concepts}).
Then it is a straightforward graph theoretic result about 2--connected plane graphs that the boundary of each region of $\R^2 \setminus \Gamma(D)$ is a cycle. We will call such a cycle \emph{boundary cycle}. 

Our strategy is a proof by induction over the following measure of complexity of~$\Gamma(D)$.
Suppose $x = (x_1, x_2, x_3,\ldots )$ and $y = (y_1, y_2, y_3, \ldots)$ are two infinite sequences of integers with only finitely many non-zero integers.  Define $x > y$  iff the rightmost non-zero entry of $x-y$ is positive.
Let $f_i$ be the number of boundary cycles of length $2i$. We define the infinite sequence
\[
f(\Gamma(D)) = (f_1, f_2, f_3, \ldots).
\]
Given a link diagram $D$ satisfying the hypothesis of \cref{prop:main_theorem}, our idea is to produce a new link diagram $D'$ also satisfying the hypothesis.  Furthermore we aim to do this so that $f(\Gamma(D')) < f(\Gamma(D))$ and so that $\Sigma(D)$ is a quasipositive surface if $\Sigma(D')$ is a quasipositive surface.
Having already verified the root case that all boundary cycles have length 2 (which implies having two Seifert circles), the induction will give us the result.

If $\Gamma(D)$ contains a vertex of degree $2$ adjacent to a positive and a negative edge, then we have $\Sigma(D) = \Sigma(D')$ where $D'$ is obtained from $D$ by removing two crossings via a Reidemeister II move.  Furthermore $D'$ satisfies the hypothesis of \cref{prop:main_theorem} and has $f(\Gamma(D')) < f(\Gamma(D))$.
So we may and do assume that $\Gamma(D)$ contains %
no degree $2$ vertices adjacent to both a positive and negative edge.

Let us now introduce a new move, which generalizes (\cref{c7}) from the proof of \cref{thm:1}.
Suppose $v, w$ are vertices of $\Gamma(D)$ on the boundary $C$ of a region of $\R^2 \setminus \Gamma(D)$. Let $d$ be the distance (lengthwise, not weighted)
between $v$ and $w$ along $C$ and suppose $d \geq 2$.
We now describe a diagram $D'$ obtained from $D$.  It is enough to describe $\Gamma(D')$, which is obtained from $\Gamma(D)$ by adding
a chord consisting of a path of $(d - 2)$ positive edges between $v$ and $w$ inside of the region. In the special case
of $d = 2$, adding a chord of length $0$ is understood as merging $v$ and~$w$.
The two crucial observations are:
\begin{itemize}
\item We have that $f(\Gamma(D')) < f(\Gamma(D))$ because the regions of $\R^2 \setminus \Gamma(D')$ correspond to those of $\R^2 \setminus \Gamma(D)$, except for the
region with boundary $C$, which is split into two regions, each of them with strictly fewer edges than $C$.
\item The surface $\Sigma(D)$ is an incompressible subsurface of $\Sigma(D')$.
\end{itemize}
So to conclude the proof of the proposition, it suffices to show that this move is always possible in such a
way that $D'$ still satisfies the hypothesis of the proposition.
Note that the only cycles of $\Gamma(D')$ not occurring as cycles of $\Gamma(D)$ are those that pass through the new chord.
So it suffices to pick $v, w$ and $C$ such that
any path $Q$ in $\Gamma(D)$ between $v$ and $w$ satisfies $w(Q) + d - 2 \geq 2$.
That this is always possible is the contents of \cref{prop:weight_4_region_exists_really,prop:weight_4_are_splittable}.%
\end{proof}

\begin{definition}
	\label{def:property_starfish}
	We say that a link diagram $D$ has property (*) if it satisfies the following.\\
	\begin{itemize}
		\item All Seifert circles of $D$ have empty interior.
		\item All cycles of $\Gamma(D)$ have positive total weight.
		\item $\Gamma(D)$ is 2--connected (see \cref{def:graph_concepts}).
		\item $\Gamma(D)$ contains no degree $2$ vertices adjacent to both a positive and a negative edge.
	\end{itemize}
\end{definition}

\begin{definition}
	\label{def:splittable_boundary}
	Suppose that $D$ has property (*) and $C$ is the boundary cycle of a region of $\R^2 \setminus \Gamma(D)$.  We say that $C$ is \emph{splittable} if there exist vertices $v$ and $w$ of $C$, distance $d \geq 2$ apart on $C$, such that every path $Q$ in $\Gamma(D)$ connecting $v$ to $w$ satisfies $w(Q) \geq 4-d$.
\end{definition}

\begin{proposition}\label{prop:weight_4_region_exists_really}
	Suppose that $D$ has property~(*).  Then there is a region of $\R^2 \setminus \Gamma(D)$ whose boundary cycle $C$ has $w(C) \geq 4$.
\end{proposition}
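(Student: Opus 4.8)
The plan is a proof by contradiction. Suppose no region had boundary weight $\geq 4$. Since $\Gamma(D)$ is bipartite, every boundary cycle has even length and hence even total weight; and since $D$ has property~(*), every cycle has positive weight. So, under our assumption, every region of $\R^2\setminus\Gamma(D)$ would have boundary cycle of weight exactly~$2$. Writing $V$ for the number of vertices, $E_+, E_-$ for the numbers of positive and negative edges, and $F$ for the number of regions, and using that $\Gamma(D)$ is $2$--connected (hence bridgeless, so each edge lies on exactly two regions and each boundary cycle uses each of its edges once), summing over all regions gives $2F=\sum_R w(\partial R)=2(E_+-E_-)$. Combined with Euler's formula $V-(E_++E_-)+F=2$ this forces $V=2E_-+2$; in particular $V$ is even, so the proposition already holds whenever $V$ is odd.

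For the remaining (even) case I would extract more structure from property~(*). First, the negative edges span a forest $\Gamma_-$, since a cycle made up only of negative edges would have negative weight; for the same reason there are no two parallel edges of opposite sign, and no positive edge joins two vertices lying in a common tree of $\Gamma_-$ (it would close a cycle of weight $1-(\text{path length})\le 0$). Second, every leaf of $\Gamma_-$ has degree at least~$3$ in $\Gamma(D)$: a degree-$2$ leaf would be a degree-$2$ vertex incident to its negative edge and to exactly one further edge, which is positive, contradicting the last clause of property~(*); hence every leaf of $\Gamma_-$ carries at least two positive edges.

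To finish, I would view $\Gamma(D)$ as the positive subgraph $\Gamma_+$ (on the same vertex set) together with the forest $\Gamma_-$ drawn into the regions of $\Gamma_+$. Drawing a $k$--edge forest into a disc splits it into $k+1$ pieces, each meeting the forest; consequently the regions of $\Gamma(D)$ with no negative edge on their boundary are precisely the regions of $\Gamma_+$ into which no negative edge was drawn, and such a region, being bounded by positive edges only, has total weight equal to its length. So it suffices to exhibit a region of $\Gamma_+$ that contains no negative edge and is not a bigon: by bipartiteness its length, and thus its weight, is then at least~$4$, contradicting that every region of $\Gamma(D)$ has weight~$2$. The count here is meant to use that the leaves of $\Gamma_-$ force $\Gamma_+$ to have comparatively many edges (at least two per leaf), so that $\Gamma_+$ has strictly more regions than there are negative edges available to occupy them, forcing at least one non-bigon negative-edge-free region.

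The main obstacle is exactly this last counting step. The inequality $E_+-E_-\ge F$ is automatic, and it is precisely the improvement to a \emph{strict} inequality that is needed; pure Euler-characteristic bookkeeping (together with the handshake lemma and the forest structure of $\Gamma_-$) turns out to be consistent with equality, so one must genuinely use the planar embedding alongside the degree condition at the leaves of $\Gamma_-$, and one must handle carefully the degenerate configurations that property~(*) is designed to rule out or control: positive bigons, negative edges running parallel to positive ones, degree-$2$ vertices both of whose edges are negative, and single regions of $\Gamma_+$ absorbing several negative edges at once.
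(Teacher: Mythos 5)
Your derivation of $V = 2E_- + 2$ from the assumption that every region has boundary weight exactly $2$ is correct and is essentially the content of the paper's \cref{lem:posvert}; the paper uses this count to conclude that there must exist a \emph{positive} vertex, i.e.\ a vertex meeting no negative edge (since the negative edges can touch at most $2E_-$ vertices). Your structural observations about $\Gamma_-$ (that it is a forest, that no positive edge closes up a path in $\Gamma_-$, and that leaves of $\Gamma_-$ have degree at least $3$) are also correct, but you do not actually use them to close the argument.

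The genuine gap is the one you yourself flag: the attempt to upgrade $E_+ - E_- \geq F$ to a strict inequality by distributing negative edges into regions of $\Gamma_+$. You correctly observe that the Euler-characteristic bookkeeping alone is consistent with equality, and the informal claim that ``drawing a $k$--edge forest into a disc splits it into $k+1$ pieces'' does not directly apply here (a forest drawn in an open region need not disconnect it unless it is attached to the boundary in the right way, and $\Gamma_+$ may even be disconnected). No version of a global count is given that actually produces the needed non-bigon positive region, and you acknowledge this.

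The paper takes a different route precisely to escape this impasse: rather than counting globally, it localizes at a positive vertex $v$ (whose existence follows from the very count you performed). It introduces the notion of a \emph{wicked} positive vertex --- one for which all pairwise intersections $C_i \cap C_j$ of the boundary cycles of the regions around $v$ are connected --- and shows (\cref{lem:weight_4_region_exists}) that a wicked positive vertex forces a weight-$\geq 4$ region by a short local weight computation around $v$. The remaining case (\cref{lem:wicked_vertices_exist}) is handled by an induction on the number of positive vertices and the number of crossings, using a flype to slide a negative edge next to $v$ and reduce the positive-vertex count, together with Reidemeister II simplifications. This local-plus-inductive strategy, and in particular the ``wicked vertex'' device and the flyping move, are the missing ingredients in your proposal.
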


We postpone the proof of this proposition to the end of the section.

\begin{proposition}
	\label{prop:weight_4_are_splittable}
	Suppose that $D$ has property (*) and $C$ is the boundary cycle of a region of $\R^2 \setminus \Gamma(D)$ with $w(C) \geq 4$.  Then $C$ is splittable.
\end{proposition}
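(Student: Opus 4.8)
The plan is to unwind the definition of \emph{splittable} (\cref{def:splittable_boundary}) and produce suitable vertices $v,w\in C$ by hand. Throughout I will use that, by property~(*), $\Gamma(D)$ is bipartite and all its cycles have positive — hence even, hence at least $2$ — total weight. The one elementary tool needed is: if $A$ is one of the two arcs of $C$ between $v$ and $w$ and $Q$ is a $v$--$w$ path \emph{internally disjoint} from $A$, then $Q\cup A$ is a cycle of $\Gamma(D)$, so $w(Q)\ge 2-w(A)$; and if $Q$ instead meets the interior of $A$, then cutting $Q$ at its intersections with $A$ and applying the same estimate to each piece, while reading off the edge signs along $A$, yields an at least as good bound. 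The role of property~(*) here is that any vertex of $A$ incident to edges of both signs has degree $\ge 3$, which is exactly what keeps this piecewise bookkeeping nonnegative; only interior vertices of $A$ incident to two equal-sign edges may have degree $2$, and those force $Q$ to run along the whole two-edge subarc, which is then handled directly.

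The heart of the argument is the choice of $A$ when $C$ has a negative edge: I take a subarc $A\subseteq C$ of length $d\in\{3,4\}$ with $d\le n$ containing \emph{exactly two} positive edges, so $w(A)=4-d$. If $C$ has an isolated negative edge, take $A$ of sign pattern $(+,-,+)$ around it ($d=3$); otherwise $w(C)\ge 4>0$ forces a positive run of length $\ge 2$, which is followed by a negative run of length $\ge 2$, and I take $A$ of pattern $(+,+,-,-)$ ($d=4$). One checks $d\le n$ because having a negative edge together with $w(C)\ge 4$ forces $|C|\ge 6$, respectively $|C|\ge 8$. Now let $v,w$ be the endpoints of $A$. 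The arcs $A$ and $B=C\setminus A$ have $w(A)=4-d$ and $w(B)=w(C)-w(A)\ge 4-d$; any other $v$--$w$ path $Q$ has $w(Q)\ge 2-w(A)=d-2\ge 4-d$ (since $d\ge 3$) when internally disjoint from $A$, and at least that when meeting its interior — the delicate sub-case being $Q$ through a degree-$2$ interior vertex, which is immediate for $d=4$ since then $4-d=0$, and does not occur for $d=3$ because the $(+,-,+)$ arc has both interior vertices incident to a positive and a negative edge. Hence $C$ is splittable in this case.

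It remains to treat $C$ entirely positive (it cannot be entirely negative, as $w(C)>0$), so $|C|=2n\ge 4$. If $\Gamma(D)=C$ this is immediate: choosing $v,w$ at distance $2$, the only $v$--$w$ paths are the two arcs, of weights $2$ and $w(C)-2\ge 2$. If $\Gamma(D)\ne C$, pick a vertex $m$ of $C$ with $\deg_{\Gamma(D)}(m)\ge 3$, let $v,w$ be its two $C$-neighbours, and take $d=2$; after moving $\infty$ into the face bounded by $C$ we may assume $C$ is the outer face, so the third edge at $m$ points strictly into the disc bounded by $C$. A $v$--$w$ path through $m$ has weight $\ge 2$ (cut it at $m$ and use the two positive edges $e_{vm},e_{mw}$). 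For a $v$--$w$ path $Q$ avoiding $m$ one gets the cycle $\gamma=Q\cup\{e_{vm},e_{mw}\}$ of weight $w(Q)+2$; since the third edge at $m$ lies strictly inside $\gamma$, the region it bounds is not a single face of $\Gamma(D)$, and one must deduce from this, together with positivity of all face boundaries and property~(*), that $w(\gamma)\ge 4$, i.e.\ $w(Q)\ge 2$.

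I expect this last step — upgrading ``$\gamma$ encloses at least two faces'' to ``$w(\gamma)\ge 4$'' in the all-positive case — to be the main obstacle, because the weight of an enclosing cycle equals the sum of the enclosed face-weights \emph{minus twice} the weight of the edges strictly inside, so a crude count does not close the gap; one has to use more of the planar structure, for instance a minimality argument on the region bounded by $\gamma$ or an iteration starting from an innermost face meeting $e_{vm}$. All the sub-cases arising when $C$ has a negative edge, by contrast, are routine once the short arc $A$ with $w(A)=4-d$ is chosen, since there $4-d$ is already the weakest possible bound.
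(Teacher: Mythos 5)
There is a gap in the case you describe as routine. When $C$ has no isolated negative edge you choose the arc $A$ with sign pattern $(+,+,-,-)$, label its vertices $a_0,\dots,a_4$, and set $v=a_0$, $w=a_4$, $d=4$; you must then show $w(Q)\ge 0$ for every $v$--$w$ path $Q$. But consider a $Q$ consisting of a path $Q'$ from $a_0$ to $a_2$ meeting $A$ only at its two endpoints, followed by the negative edges $a_2a_3$ and $a_3a_4$. The only lower bound your bookkeeping gives for $Q'$ is $w(Q')\ge 0$, coming from the cycle $Q'\cup\{a_0a_1,a_1a_2\}$ having weight $\ge 2$; hence $w(Q)\ge 0-1-1=-2$, not $\ge 0$. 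The estimate does close in the paper's \cref{prop:not_positive_region} because there the arc is taken to span the \emph{entire} negative run flanked by one positive edge on each side, $P=(+,-,\dots,-,+)$. The two positives are then the first and last edges of $P$, so the only way a cycle $Z_i$ in the symmetric difference $P\triangle Q$ can contain both (the bad case where $w(P_i)=2$) is for $Z_i\setminus P_i$ to consist of two disjoint paths, one of which joins $v$ to $w$; this contradicts $Q$ being a minimal $v$--$w$-connecting subgraph. With your $A$ the two positives are adjacent, so $\{a_0a_1,a_1a_2\}\cup Q'$ is a perfectly ordinary cycle and this topological obstruction vanishes. (Your $d=3$ choice $(+,-,+)$ does place the positives at the ends and is fine.)

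You have correctly identified the all-positive case as the genuinely hard part: as you note, knowing that $\gamma=Q\cup\{e_{vm},e_{mw}\}$ encloses at least two faces does not force $w(\gamma)\ge 4$, since the face-sum formula subtracts twice the weight of the enclosed edges. The missing idea is an exchange argument for crossing paths, which the paper packages as \cref{lem:intersecting_lemma}: given paths joining $v_0$ to $v_2$ and $v_1$ to $v_3$, where $v_0,\dots,v_3$ are cyclically ordered on $C$, one may resolve the crossing to obtain paths joining $v_i$ to $v_{i+1}$ and $v_{i+2}$ to $v_{i+3}$ of no greater total weight. Placing the four vertices at distances $1,1,n-1,n-1$ along $C$ and combining with \cref{lem:paths_of_low_weight_are_verboten} yields the contradiction in \cref{prop:positive_region}. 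This is a global argument along $C$; no local analysis at the single vertex $m$ will suffice.
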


\begin{proof}
	The proof is divided into two cases given as \cref{prop:not_positive_region,prop:positive_region}.
\end{proof}

\begin{lemma}
	\label{prop:not_positive_region}
Suppose that $D$ has property (*), and that $C$ is the boundary cycle of a region of $\R^2 \setminus \Gamma(D)$ with $w(C) \geq 4$ and at least one edge of $C$ negative.  Then $C$ is splittable.

\end{lemma}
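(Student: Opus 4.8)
The plan is to exploit the single negative edge together with the 2--connectedness of $\Gamma(D)$ to locate the endpoints $v,w$ of the splitting.The plan is to place the two splitting vertices $v,w$ by following a maximal run of negative edges along $C$, and then to control every $v$--$w$ path by a symmetric-difference argument.

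I would begin with parity bookkeeping. Since $\Gamma(D)$ is bipartite, $C$ has even length and every cycle of $\Gamma(D)$ has \emph{even} total weight; writing $n \geq 1$ for the number of negative edges of $C$, this gives $\ell(C) = w(C) + 2n \geq 4 + 2n$. Now let $N$ be a maximal arc of $C$ consisting of $k$ consecutive negative edges (so $1 \leq k \leq n$), let $x$ and $y$ be its endpoints, and let $vx$ and $yw$ be the two edges of $C$ immediately flanking $N$; by maximality of $N$ these are positive. Set $\alpha = vx \cup N \cup yw$, an arc of $C$ of length $k+2$ with exactly two positive edges, so $w(\alpha) = 2 - k$, and let $\beta$ be the complementary arc. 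From $\ell(C) \geq 4 + 2n \geq 4 + 2k$ one gets $\ell(\alpha) = k+2 \leq \ell(C)/2 \leq \ell(\beta)$, so in particular $v \neq w$, the distance along $C$ is $d = \ell(\alpha) = k+2 \geq 3$, and the target inequality $w(Q) \geq 4 - d$ becomes $w(Q) \geq 2 - k = w(\alpha)$.

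To prove this, let $Q$ be any path from $v$ to $w$ in $\Gamma(D)$. The symmetric difference $Q \triangle \alpha$ of edge sets has every vertex of even degree, hence is an edge-disjoint union of $s \geq 0$ cycles $Z_1,\dots,Z_s$ of $\Gamma(D)$, each of weight $\geq 2$ by property~(*). Combining $w(Q \triangle \alpha) = w(Q) + w(\alpha) - 2\,w(Q \cap \alpha)$ with $w(Q \cap \alpha) = w(\alpha) - w(\alpha \setminus Q)$ yields
\[
w(Q) \;=\; \sum_{i} w(Z_i) + w(\alpha) - 2\,w(\alpha \setminus Q) \;\geq\; 2s + w(\alpha) - 2\,w(\alpha \setminus Q),
\]
so it suffices to show $s \geq w(\alpha \setminus Q)$. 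As $\alpha$ has exactly two positive edges, $w(\alpha\setminus Q) \leq 2$; if $w(\alpha\setminus Q) \leq 1$ this holds for free (note $s=0$ forces $Q = \alpha$ and then $w(\alpha\setminus Q)=0$). In the remaining case $w(\alpha\setminus Q) = 2$ the only edges of $\alpha$ avoided by $Q$ are $vx$ and $yw$; then $Q$ contains all of $N$, necessarily as a contiguous subpath since $Q$ is simple, say $Q = Q_1 \cdot N \cdot Q_2$, and $Q \triangle \alpha$ is the edge-disjoint union of the two cycles $Q_1 \cup \{vx\}$ and $Q_2 \cup \{yw\}$, so $s \geq 2$. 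Thus $s \geq w(\alpha\setminus Q)$ always, whence $w(Q) \geq w(\alpha) = 4-d$ and $C$ is splittable.

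The main obstacle is exactly the inequality $s \geq w(\alpha \setminus Q)$: the naive cycle-positivity estimate only gives $s \geq 1$ whenever $Q \neq \alpha$, which is off by $2$ in the extremal situation where $Q$ bypasses both positive end-edges of $\alpha$, and the argument must recover this by observing that in that situation the symmetric difference necessarily splits into (at least) two cycles rather than one. This is also the reason for choosing $\alpha$ to be a maximal negative run padded by a single positive edge on each side: that is what forces $w(\alpha\setminus Q) \leq 2$ in the first place. One should also check that edges or vertices that $Q$ may share with $\beta$ or with the interior of $N$ cause no trouble -- they do not, since such edges contribute only to $Q \triangle \beta$ or cancel in $Q \triangle \alpha$ -- and that $\alpha$ is genuinely the shorter of the two arcs of $C$, which is precisely the counting inequality $\ell(C) \geq 4 + 2n$.
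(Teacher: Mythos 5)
Your proof takes essentially the same route as the paper's: choose $v,w$ to be the endpoints of a ``positive--negatives--positive'' arc $\alpha$ of $C$, and bound $w(Q)$ for an arbitrary $v$--$w$ path $Q$ by decomposing the symmetric difference of $Q$ and $\alpha$ into cycles and applying cycle-positivity. The bookkeeping that makes $\alpha$ the shorter arc and reduces the goal to $s\geq w(\alpha\setminus Q)$ is correct.

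There is, however, a genuine gap in the extremal case $w(\alpha\setminus Q)=2$. You write $Q = Q_1\cdot N\cdot Q_2$ and assert that $Q\triangle\alpha$ is the edge-disjoint union of the two cycles $Q_1\cup\{vx\}$ and $Q_2\cup\{yw\}$. This presupposes that $Q$ traverses $N$ from $x$ to $y$, so that $Q_1$ joins $v$ to $x$ and $Q_2$ joins $y$ to $w$. But $Q$ could just as well traverse $N$ from $y$ to $x$, with $Q_1$ joining $v$ to $y$ and $Q_2$ joining $x$ to $w$. In that case $Q_1\cup\{vx\}$ is not a cycle at all, and $Q\triangle\alpha = Q_1\cup\{yw\}\cup Q_2\cup\{vx\}$ is a \emph{single} cycle, giving $s=1<2$; your inequality $s\geq w(\alpha\setminus Q)$ then fails and the argument collapses. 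Nothing in your write-up excludes this orientation. This is precisely what the paper's proof is at pains to rule out: since $v,x,y,w$ occur in this cyclic order on $C$ and $C$ bounds a region disjoint from the graph, a $v$--$y$ path and an $x$--$w$ path avoiding that region would have to cross, contradicting planarity; and the paper's \cref{lem:some_useful_facts}(1) disposes of the remaining $v$--$w$ / $x$--$y$ pairing using the minimality of the path $Q$. You need to supply this planarity step (or an equivalent) to close the argument; as written it is incomplete at exactly the point you flagged as the ``main obstacle.''
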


\begin{lemma}
	\label{prop:positive_region}
	Suppose that $D$ has property (*), and that $C$ is the boundary cycle of a region of $\R^2 \setminus \Gamma(D)$ with $w(C) \geq 4$ and every edge of $C$ positive.  Then $C$ is splittable.

	\end{lemma}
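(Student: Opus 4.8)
Write $C=v_1v_2\cdots v_{2k}$ with indices read modulo $2k$; since every edge of $C$ is positive we have $w(C)=\ell(C)=2k\ge 4$. My plan is to show that some pair $v_i,v_{i+2}$ of vertices of $C$ is a witness for splittability: these lie at distance $d=2$ on $C$, so the corresponding move of \cref{prop:main_theorem} merely merges $v_i$ with $v_{i+2}$, and by \cref{def:splittable_boundary} the pair works as soon as every path $Q$ in $\Gamma(D)$ from $v_i$ to $v_{i+2}$ has $w(Q)\ge 2$. As $w(Q)$ has the parity of $\ell(Q)$, which is even because $v_i$ and $v_{i+2}$ lie in the same part of the bipartite graph $\Gamma(D)$, this is the same as $w(Q)\ge 0$ for all such $Q$. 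So I would assume, towards a contradiction, that for every $i$ there is a \emph{bad} path $Q_i$ from $v_i$ to $v_{i+2}$ with $w(Q_i)\le 0$, and then produce a cycle of non-positive weight, contradicting property~(*).

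First I would pin down the shape of a bad path $Q=Q_i$. If $Q$ passed through $v_{i+1}$ it would be a concatenation of a path $v_i\to v_{i+1}$ and a path $v_{i+1}\to v_{i+2}$; closing each of these with the corresponding (positive) edge of $C$ produces a cycle, which by property~(*) has positive and (being a cycle of a bipartite graph) even weight, hence weight $\ge 2$, so each sub-path has weight $\ge 1$ and $w(Q)\ge 2$, a contradiction. Therefore $Q$ avoids $v_{i+1}$, and $Q$ together with the length-$2$ arc $v_iv_{i+1}v_{i+2}$ of $C$ is a cycle of weight $w(Q)+2$; since this is even and positive, $w(Q)=0$. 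When $2k=4$ the arc $v_{i+2}v_{i+3}v_i$ also has length $2$, and the same argument shows that $Q$ avoids $v_{i+3}$ too, so $Q$ is internally disjoint from $C$.

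The key step is a planarity argument combined with a switching trick. Since $C$ bounds a region of $\R^2\setminus\Gamma(D)$, it bounds a disk face $R$ of the plane graph on $S^2$, and the whole of $\Gamma(D)$ — in particular each bad path — lies in the closed complementary disk $\overline{S^2\setminus R}$, whose boundary is $C$. When $2k=4$, not being splittable means both distance-$2$ pairs $\{v_1,v_3\}$ and $\{v_2,v_4\}$ fail, and the corresponding bad paths $Q_1$ (from $v_1$ to $v_3$) and $Q_2$ (from $v_2$ to $v_4$) are arcs through the interior of $\overline{S^2\setminus R}$ with linked endpoints on its boundary, so they meet; being subgraphs of a plane graph, they share a vertex $u$. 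Now switch: form $\widetilde Q_1 = Q_1[v_1\to u]\cup Q_2[u\to v_4]$, a route from $v_1$ to $v_4$, and $\widetilde Q_2 = Q_2[v_2\to u]\cup Q_1[u\to v_3]$, a route from $v_2$ to $v_3$. By construction $w(\widetilde Q_1)+w(\widetilde Q_2)=w(Q_1)+w(Q_2)=0$. On the other hand $v_1v_4$ and $v_2v_3$ are (positive) edges of $C$, so each $\widetilde Q_i$ completed by the appropriate edge of $C$ is a closed route of weight $w(\widetilde Q_i)+1$; since all cycles are positive, such a closed route cannot have smaller weight than a simple cycle it contains, so $w(\widetilde Q_i)+1\ge 2$, i.e.\ $w(\widetilde Q_i)\ge 1$, whence $w(\widetilde Q_1)+w(\widetilde Q_2)\ge 2$, a contradiction. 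Making this rigorous requires choosing $u$ (for instance the first common vertex along $Q_1$) so that the switched routes are honest paths, and controlling the weight when one must pass to a simple sub-path; this bookkeeping — essentially the fact that in a graph all of whose cycles are positive, deleting a cycle from a closed trail does not increase its weight — is where I expect the technical work in the case $2k=4$ to lie.

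The main obstacle is the general case $2k\ge 6$, where a bad path is only known to avoid the single vertex $v_{i+1}$ and can wind repeatedly through the long arc of $C$, so that two bad paths of an interleaved pair may re-meet $C$ and one another in complicated patterns; here the naive switched routes need not be paths, and one cannot argue with arbitrary closed walks, since a closed walk can have non-positive weight without containing a non-positive cycle (run back and forth across a negative edge). I would handle this by an innermost reduction: follow a bad path $Q_1$ from $v_1$ until its first return to $C$, say at $v_m$ with $m\ne 2$; if $v_m=v_3$ we are back in the internally-disjoint situation, and otherwise the initial arc of $Q_1$ together with the arc $v_1v_2\cdots v_m$ of $C$ encloses a strictly smaller disk, to which — after verifying that everything inside it still satisfies the relevant hypotheses, or else exhibiting a splitting pair directly — an induction (on the number of crossings, or on $\ell(C)$) applies; iterating this reduces the case $2k\ge 6$ to the base case treated above, producing the desired vertices $v,w$ for $C$.
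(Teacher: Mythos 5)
Your argument for the base case $\ell(C)=4$ is essentially the paper's. The switching trick---two paths with linked endpoints on the boundary of a disk must meet, after which one reroutes to get two paths joining the ``other'' pairs of endpoints---is exactly the content of \cref{lem:intersecting_lemma}, and the bookkeeping you correctly flag as needed (extracting honest paths from the switched routes and controlling the weight as you discard closed parts) is exactly what \cref{lem:some_useful_facts} together with positivity of cycles achieves, yielding the inequality $w(\widetilde Q_1)+w(\widetilde Q_2)\le w(Q_1)+w(Q_2)$ rather than the equality you write. The parity reduction from $w(Q)\ge 2$ to $w(Q)\ge 0$ is also correct. So the $\ell(C)=4$ case is fine in spirit, modulo work the paper packages into its helper lemmas.

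The gap is in the case $\ell(C)\ge 6$, and it is a real one. By restricting attention to distance-$2$ pairs you give yourself the hardest possible target (the required bound $w(Q)\ge 2$ is the strongest case of \cref{def:splittable_boundary}) while extracting only a weak piece of information from the contradiction hypothesis (some $Q_i$ with $w(Q_i)\le 0$), and then you must contend with bad paths that wind through the rest of $C$. Your proposed ``innermost reduction'' is not carried out, and I don't see how it can be in the form sketched: the disk cut off by the initial arc of $Q_1$ and the arc $v_1\cdots v_m$ of $C$ is bounded by a cycle, but that cycle is \emph{not} the boundary of a region of $\R^2\setminus\Gamma(D)$, so the induction hypothesis does not apply to it; it is unclear what quantity decreases; and it is unclear how a splitting pair found on the inner cycle would produce one on $C$. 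The paper avoids all of this by \emph{not} restricting to distance-$2$ pairs: it places four vertices $v_0,v_1,v_2,v_3$ on $C$ at consecutive distances $1,1,n-1,n-1$ (writing $\ell(C)=2n$), so that the diagonals have distances $2$ and $n$. Non-splittability then yields paths $P_{02}$, $P_{13}$ with $w(P_{02})\le 0$ and $w(P_{13})\le 2-n$; \cref{lem:paths_of_low_weight_are_verboten} forces equality in both; the switching lemma \cref{lem:intersecting_lemma} produces paths between adjacent vertices $\{v_i,v_{i+1}\}$ and $\{v_{i+2},v_{i+3}\}$ of total weight $\le 2-n$; and \cref{lem:paths_of_low_weight_are_verboten} applied again to those adjacent pairs (at distances $1$ and $n-1$) forces total weight $\ge 4-n$, a contradiction. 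The idea you are missing is to play a distance-$n$ pair off against a distance-$2$ pair; with that, the switching argument closes in a single step for every $\ell(C)\ge 4$ and no inner induction is needed.
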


For the proof of \cref{prop:not_positive_region,prop:positive_region} we first collect some straightforward facts, without proof, in the following lemma.

\begin{lemma}
	\label{lem:some_useful_facts}
	We have the following.
	\begin{enumerate}
		\item Suppose that $H$ is a graph and $v$ and $w$ are distinct vertices of $H$.  Then paths from $v$ to $w$ are exactly the minimal subgraphs of $H$ in which $v$ and $w$ are the only vertices with odd degree.
		\item If $H$ is a graph with vertices of only even degree then its set of edges can be written as a disjoint union of cycles.
		\item Suppose that $H$ is a graph with exactly two vertices $v$ and $w$ of odd degree and that $P$ is any path from $v$ to $w$ (such a $P$ exists by the first part of this lemma).  Then removing the set of edges of $P$ from $H$ leaves a graph whose set of edges is a disjoint union of cycles.\qed
	\end{enumerate}
\end{lemma}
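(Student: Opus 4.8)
The plan is to treat all three items as standard graph theory and simply spell them out; I anticipate no real obstacle, the only care needed being the bookkeeping of vertex degrees and a brief reconciliation of the paper's definition of \emph{path} (\cref{def:graph_concepts}) with the usual notion of a simple path. I would begin with that reconciliation. Writing a path as $e_1,\dots,e_n$ and letting $a_0,\dots,a_n$ be its successive vertices (so that $e_i$ joins $a_{i-1}$ to $a_i$, with $a_0=v$ and $a_n=w$), the requirement that each vertex occur at most twice as an endpoint of an edge of the path says exactly that each vertex has degree at most $2$ in the subgraph $P$; since a vertex equal to two distinct $a_j$, or equal to some $a_j$ together with $a_0$ or $a_n$, would have degree $\ge 3$, all the $a_j$ must be distinct. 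Hence ``path from $v$ to $w$'' coincides with ``simple path from $v$ to $w$'', and in such a path $v$ and $w$ have degree $1$ while every other vertex has degree $0$ or $2$; in particular $v$ and $w$ are the only odd-degree vertices, which already gives one inclusion in the first item.

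For the other inclusion in the first item I would show that a minimal subgraph $H'$ of $H$ in which $v$ and $w$ are the only odd-degree vertices is a path from $v$ to $w$. The point is that deleting the edge set of a cycle alters every vertex degree by an even amount and so preserves the set of odd-degree vertices; minimality therefore forces $H'$ to be acyclic, i.e.\ a forest. In a forest each leaf has degree $1$ and hence must equal $v$ or $w$; since a tree on at least two vertices has at least two leaves and $v\ne w$, the forest $H'$ has exactly one nontrivial component, which is a path from $v$ to $w$, and minimality excludes isolated vertices, so $H'$ is that path. The same forest argument gives the converse: a path $P$ from $v$ to $w$ has no proper subgraph with $v,w$ the only odd-degree vertices, because such a subgraph would be a $v$--$w$ subpath of $P$ and the only one is $P$ itself; so $P$ is minimal. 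I would also record here that a minimal such subgraph always exists when $H$ has exactly two odd-degree vertices: by the handshake lemma applied to each connected component, those two vertices lie in a common component and are therefore joined by a path. This is precisely the parenthetical existence claim invoked in the third item.

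For the second item the plan is induction on the number of edges: a graph with all degrees even and at least one edge has, after discarding its isolated vertices, minimum degree $\ge 2$, hence contains a cycle $Z$ (extend a longest path; its initial vertex has a neighbour other than its successor on the path, or a repeated parallel edge, closing up a cycle); removing $E(Z)$ keeps all degrees even, so by induction the remaining edges decompose into cycles, and adjoining $Z$ finishes the argument. The third item is then immediate: for a path $P$ from $v$ to $w$ the degree function $\deg_P$ is $1$ at $v$ and $w$, equal to $2$ at the interior vertices of $P$, and $0$ elsewhere, so deleting $E(P)$ from $H$ --- whose only odd-degree vertices are $v$ and $w$ --- leaves every vertex of even degree, and the second item applies. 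The one spot where I expect to slow down is making the degree bookkeeping in the first item fully airtight, in particular checking that the paper's ``at most twice'' condition genuinely rules out every degenerate configuration; but this is a short finite check rather than a real difficulty.
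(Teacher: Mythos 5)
The paper states this lemma explicitly ``without proof'' (the \verb|\qed| is attached to the statement itself), so there is no authorial argument to compare against; your proof is a correct, complete write-up of the standard graph-theoretic facts being invoked. In particular, the reconciliation of the paper's degree-at-most-two definition of path with the usual notion of simple path, the observation that deleting a cycle preserves degree parities (which drives the minimality argument in item~(1)), the Euler-style cycle-decomposition induction for item~(2), and the degree bookkeeping reducing item~(3) to item~(2) are all sound, and the handshake-lemma remark cleanly handles the parenthetical existence claim in item~(3).
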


\begin{proof}[Proof of \cref{prop:not_positive_region}]
Let us pick $v$ and $w$ as follows. Walking around $C$,
pick $v$ such that the next edge is positive and the one immediately after is negative,
and continue walking until the next positive edge, and call its farther vertex $w$.
Then one has walked along a path $P$ with
$w(P) = 4 - \ell(P)$. The other path $P'$ in $C$ between $v$ and $w$ satisfies $w(P') + w(P) \geq 4$,
and so $\ell(P') \geq w(P') \geq 4 - w(P) = \ell(P)$.  Hence $P$ is not the longer path between $v$ and $w$ around $C$, and
$d = \ell(P)$.

For every path $Q$ in $\Gamma(D)$ between $v$ and $w$, we must show that
$w(Q) \geq 4-d$, which is equivalent 
to showing $w(Q) \geq w(P)$ since $4-d=4-\ell(P)=w(P)$.

So let $Q$ be such a path.  Note that $(P \cup Q) \setminus (P \cap Q)$ is a union of cycles (see \cref{lem:some_useful_facts}(2)), say $Z_1, \ldots, Z_n$.  Let us write $Z_i = P_i \sqcup Q_i$ for a decomposition of each $Z_i$ into sets of edges $P_i \subset P$ and $Q_i \subset Q$.
Then we have that
\begin{align*}
w(P) &= w(P_1) + \ldots + w(P_n) + w(P \cap Q),\\
w(Q) &= w(Q_1) + \ldots + w(Q_n) + w(P \cap Q).
\end{align*}
So we shall be done if we can show that $w(Q_i) \geq w(P_i)$ for all $i$.

Since $P$ contains only two positive edges, we must have $w(P_i) \leq 2$, with equality only when $P_i$ consists of exactly the two positive edges of $P$ (in other words the first and last edge of $P$).  Since the path $Q_i$ of course cannot enter the region bounded by $C$ the only way that $w(P_i) = 2$ can happen is if $Q_i$ consists of a path connecting $v$ to $w$ and another path connecting the other two endpoints of the first and last edge of~$P$.  But $Q$ is a path connecting $v$ to $w$ and $Q_i \subset Q$, so we have a contradiction
by \cref{lem:some_useful_facts}(1).
Therefore we must have $w(P_i) \leq 1$ for all $i$.  Further note that $w(Q_i) + w(P_i) = w(Z_i) \geq 2$ since $Z_i$ is a cycle.  Hence we can conclude that $w(Q_i) \geq w(P_i)$ for all $i$.
\end{proof}

A heuristic important for our proof of \cref{prop:positive_region} is that if two paths of a weighted graph intersect, one can resolve them and obtain two new paths which have the same total weight.  This heuristic turns out, when formalized in the following lemma, only to yield an inequality rather than an equality.

\begin{lemma}
	\label{lem:intersecting_lemma}
	Let $D$ be a diagram which has property (*) and consider the boundary cycle $C$ of a region of $\R^2 \setminus \Gamma(D)$.  Suppose that $\ell(C) \geq 4$, and let $v_i$ be distinct vertices of $C$ for $i \in \Z/4$, occurring in the cyclic ordering around $C$.  Suppose that $P_{02}$ and $P_{13}$ are paths from $v_0$ to $v_2$ and from $v_1$ to $v_3$ respectively.
	
	Then for some $i \in \{0,1\}$ there exists a path $P_{i,i+1}$ from $v_i$ to $v_{i+1}$ and a path $P_{i+2, i+3}$ from $v_{i+2}$ to $v_{i+3}$ such that
\[ w(P_{i,i+1}) + w(P_{i+2, i+3}) \leq w(P_{02}) + w(P_{13}) {\rm .} \]
\end{lemma}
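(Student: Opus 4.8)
The plan is to prove \cref{lem:intersecting_lemma} by an \emph{uncrossing} argument, formalizing exactly the heuristic quoted just before the statement. Because $v_0,v_2$ separate $v_1,v_3$ in the cyclic order on $C$, planarity will force $P_{02}$ and $P_{13}$ to meet; resolving them at a common vertex replaces the pair $\{P_{02},P_{13}\}$ either by a pair joining $v_0$ to $v_1$ and $v_2$ to $v_3$ (case $i=0$) or by a pair joining $v_1$ to $v_2$ and $v_3$ to $v_0$ (case $i=1$), and property~(*) will be used to bound the effect on the total weight. For the first step: since $C$ is the boundary cycle of a region $R$ of $\R^2\setminus\Gamma(D)$, both paths lie in $\Gamma(D)$ and hence avoid the interior of $R$; completing $P_{02}$ to a simple closed curve by an arc through $R$ joining $v_0$ to $v_2$ and applying the Jordan curve theorem, the cyclic order on $C$ places $v_1$ and $v_3$ on opposite sides, so $P_{13}$ must cross $P_{02}$, necessarily at a vertex of $\Gamma(D)$.

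Next I would form the multigraph $G$ on the vertices of $\Gamma(D)$ with edge multiset $E(P_{02})\uplus E(P_{13})$, so a common edge of the two paths occurs twice. By the first step $G$ is connected, and a parity count (a path of $\Gamma(D)$ has odd degree exactly at its two endpoints) shows $v_0,v_1,v_2,v_3$ are precisely the odd-degree vertices of $G$. Choose a shortest path $A$ from $v_0$ to $v_1$ in $G$, so $A$ is simple and descends to a path of $\Gamma(D)$; then $G\setminus E(A)$ has exactly $v_2,v_3$ as odd-degree vertices, hence contains a path $B$ from $v_2$ to $v_3$, and by \cref{lem:some_useful_facts} the remaining even-degree multigraph is a disjoint union of closed trails $Z_1,\dots,Z_m$. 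Comparing edge multisets gives
\[
w(A)+w(B)\;=\;w(P_{02})+w(P_{13})\;-\;\textstyle\sum_j w(Z_j).
\]
If $P_{02}$ and $P_{13}$ have no common edge, each $Z_j$ is a disjoint union of honest cycles of $\Gamma(D)$, each of positive weight by property~(*), so $\sum_j w(Z_j)\geq 0$ and $A,B$ already serve as $P_{0,1},P_{2,3}$: this settles case $i=0$, and the same argument works verbatim whenever all common edges of $P_{02}$ and $P_{13}$ are positive.

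The one obstruction — and the step I expect to be the real work — is a closed trail $Z_j$ that runs twice through a \emph{negative} edge $e$ (once for each of its two copies in $G$), contributing $-2$ rather than a positive amount. By property~(*) such an $e$ is the unique edge of $\Gamma(D)$ between its endpoints, so both $P_{02}$ and $P_{13}$ traverse it; a short case analysis of the two possible traversal directions shows that one can instead cut the two paths so that $e$ survives exactly once, and that depending on whether $P_{02}$ and $P_{13}$ cross $e$ in the same or in opposite directions this forces a pair joining $v_0$ to $v_1$ and $v_2$ to $v_3$, or else a pair joining $v_1$ to $v_2$ and $v_3$ to $v_0$. This is exactly why the freedom $i\in\{0,1\}$ is present in the statement. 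After the adjustment the surviving closed trails contribute non-negatively, and reducing the two resulting walks to genuine paths only discards positive-weight cycles (again property~(*)), so the final paths $P_{i,i+1}$ and $P_{i+2,i+3}$ satisfy $w(P_{i,i+1})+w(P_{i+2,i+3})\leq w(P_{02})+w(P_{13})$. Carrying this out carefully amounts to tracking, around each shared negative edge, the directions in which $P_{02}$ and $P_{13}$ run and the cyclic positions of the incident edges at its two endpoints; the planar and Euler-tour bookkeeping of the first two steps is routine by comparison.
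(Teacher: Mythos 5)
Your opening moves are sound: forming the multigraph $G$ on edge multiset $E(P_{02}) \uplus E(P_{13})$, the parity count identifying $v_0,\dots,v_3$ as the odd-degree vertices, extracting paths $A$ and $B$ so that the leftover is a disjoint union of closed trails $Z_1,\dots,Z_m$, and the resulting weight identity $w(A)+w(B) = w(P_{02})+w(P_{13}) - \sum_j w(Z_j)$ are all correct, and the Jordan-curve argument that $P_{02}$ and $P_{13}$ must share a vertex is the right planarity input (modulo the easily-repaired degenerate case where $P_{02}$ passes through $v_1$ or $v_3$). But the keystone of the proof --- what you yourself call ``the real work'' --- is asserted rather than proved, and I do not believe it can be proved along the lines you sketch. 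You correctly isolate the obstruction: a closed trail $Z_j$ using both copies of a shared \emph{negative} edge $e$ contributes $-2$, spoiling the bound. Your proposed fix is a case analysis on whether $P_{02}$ and $P_{13}$ traverse $e$ in the ``same'' or ``opposite'' direction, with each outcome forcing one value of $i$. This is a local analysis at a single shared negative edge. If $P_{02}$ and $P_{13}$ share several negative edges, nothing prevents some from falling in the ``same'' case and others in the ``opposite'' case, which by your own dichotomy would force both $i=0$ and $i=1$ simultaneously; and iterating the uncrossing after handling one shared edge produces walks whose endpoints are no longer the ones you want. You have identified the right difficulty but not resolved it.

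The paper's proof takes essentially the same underlying idea but makes one structural choice that dissolves the obstruction at the outset: it works from the start with the symmetric difference $H = (P_{02}\cup P_{13})\setminus(P_{02}\cap P_{13})$ rather than with the multiset union. Shared edges of either sign are then cancelled once and for all, never appear twice, and never produce a $-2$. The connecting path $Q$ from $v_0$ to $v_1$ (or $v_3$) is taken inside $H$ (restricted to the union $\widetilde H$ of components containing the $v_i$), so every edge of $Q$ lies in exactly one of $P_{02}$, $P_{13}$; this yields the exact identity
\[
w\bigl((P_{02}\cup Q)\setminus(P_{02}\cap Q)\bigr) + w\bigl((P_{13}\cup Q)\setminus(P_{13}\cap Q)\bigr) = w(P_{02}) + w(P_{13}),
\]
and each of the two symmetric differences decomposes by \cref{lem:some_useful_facts} into the desired path plus genuine cycles of $\Gamma(D)$, each of weight $\geq 2$ by property~(*). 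Finally, the freedom $i\in\{0,1\}$ does not come from edge directions as you surmise, but from a planarity constraint on $\widetilde H$ itself: a Jordan-curve argument shows $\widetilde H$ cannot split into one component containing $\{v_0,v_2\}$ and another containing $\{v_1,v_3\}$, so $v_0$ is joined in $\widetilde H$ to $v_1$ or to $v_3$, and that alternative is exactly what $i$ records.
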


\begin{proof}
	Consider the subgraph $H$ of $\Gamma(D)$
	\[ H = (P_{02} \cup P_{13}) \setminus (P_{02} \cap P_{13}) {\rm .} \]
	The vertices in $H$ of odd degree are exactly the vertices $v_i$ for $i \in \Z / 4$.  Let $\widetilde{H}$ be the subgraph of $H$ obtained by removing those connected components of $H$ not containing any of the vertices $v_i$. %
	Again the vertices of $\widetilde{H}$ of odd degree are exactly the vertices $v_i$.  Therefore $\widetilde{H}$ is either connected or has exactly two components each containing two of the vertices $P_i$.
Any path connecting $v_0$ and $v_2$ must intersect any path connecting $v_1$ and $v_3$.
Hence, it cannot be the case that there are two components of which one contains $v_0$ and $v_2$ and while the other contains $v_1$ and $v_3$.  Therefore $v_0$ is in the same component as $v_1$ or as $v_3$.  Let us assume the former for now.
	
	Let $Q$ be a path in $\widetilde{H}$ connecting $v_0$ to $v_1$.  Note that $Q$ is a subgraph of $H$ and so, by construction, each edge of $Q$ occurs either in $P_{02}$ or in $P_{13}$ but not in both.
	Therefore we have
	\begin{align*}
		w(P_{02}) + w(P_{13}) =
		w((P_{02} \cup Q) \setminus (P_{02} \cap Q)) + w((P_{13} \cup Q) \setminus (P_{13} \cap Q)) {\rm .}
	\end{align*}
	
	Now $v_1$ and $v_2$ are exactly the vertices of $(P_{02} \cup Q) \setminus (P_{02} \cap Q)$ of odd degree.  Therefore $(P_{02} \cup Q) \setminus (P_{02} \cap Q)$ can be written as the disjoint union of a path $P_{12}$ from $v_1$ to $v_2$ and some cycles (by \cref{lem:some_useful_facts}).  Since by assumption each cycle has weight $\geq 2$, we must have that
	\[ w(P_{12}) \leq w((P_{02} \cup Q) \setminus (P_{02} \cap Q)) {\rm .} \]
	Similarly there is a path $P_{30}$ from $v_3$ to $v_0$ satisfying
	\[ w(P_{30}) \leq w((P_{13} \cup Q) \setminus (P_{13} \cap Q)) {\rm .} \]
	
	Hence we have
	\begin{align*}
		w(P_{12}) + w(P_{30}) &\leq w((P_{02} \cup Q) \setminus (P_{02} \cap Q)) + w((P_{13} \cup Q) \setminus (P_{13} \cap Q)) \\
		&= 	w(P_{02}) + w(P_{13}) {\rm .}
	\end{align*}
	The other case follows from making the assumption that $v_0$ is in the same component of $\widetilde{H}$ as $v_3$.
\end{proof}

\begin{lemma}
	\label{lem:paths_of_low_weight_are_verboten}
	Suppose that $D$ has property (*) and that $C$ is the boundary of a region of $\R^2 \setminus \Gamma(D)$ such that all edges of $C$ have weight $+1$.  Let $v$ and $w$ be vertices of $C$ such that the shortest path in $C$ between $v$ and $w$ has length $d \geq 1$.  Then there is no path between $v$ and $w$ of weight less than $2-d$.

\end{lemma}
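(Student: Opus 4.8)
The statement to prove is \cref{lem:paths_of_low_weight_are_verboten}: if $D$ has property~(*), $C$ is an all-positive boundary cycle, $v,w\in C$ with $d=\mathrm{dist}_C(v,w)\geq 1$, then no path in $\Gamma(D)$ between $v$ and $w$ has weight $<2-d$. I would argue by contradiction and induction. Suppose $Q$ is a path from $v$ to $w$ with $w(Q)\leq 1-d$; among all such counterexamples (over all valid $D$, $C$, $v$, $w$, and offending paths) pick one minimizing, say, $\ell(Q)$ plus the number of edges of $\Gamma(D)$, or alternatively one where $d$ is smallest. The key structural input is the same cut-and-paste philosophy used in \cref{lem:intersecting_lemma} and \cref{prop:not_positive_region}: form the symmetric-difference subgraph, decompose into a path plus cycles, and use that every cycle of $\Gamma(D)$ has weight $\geq 2$.

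**Key steps.** First, the base case $d=1$: here $v$ and $w$ are joined by a positive edge $e\subset C$, and we must show no path from $v$ to $w$ has weight $\leq 0$. If $Q$ were such a path, then $Q\cup\{e\}$ (taking care if $e\in Q$, in which case $Q=e$ and $w(Q)=1$, done) contains a cycle through $e$; more carefully, $Q$ together with $e$ has only even-degree vertices, so by \cref{lem:some_useful_facts}(2) its edge set splits into cycles, one of which, $Z$, contains $e$. Then $w(Z)\geq 2$, and since $e$ contributes $+1$, the rest of $Z$ has weight $\geq 1$; but the rest of $Z$ is a sub-path of $Q$... this needs the minimality of $Q$ to conclude $w(Q)\geq 1$. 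So the cleaner approach is: take $Q$ shortest among counterexamples for this $(v,w)$; close it up with $e$ to get a single cycle $Z=Q\cup\{e\}$ (minimality of $\ell(Q)$ forces $Q\cap C$-structure so that no extra cycles appear — one must check $Q$ uses $e$ at most trivially), whence $w(Q)=w(Z)-1\geq 2-1=1>0$.

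**Inductive step.** For $d\geq 2$, let $P$ be the shortest path in $C$ from $v$ to $w$, so $\ell(P)=d$ and $w(P)=d$ (all edges positive). Pick an interior vertex $u$ of $P$ splitting it as $P=P_1\cup P_2$ with $\ell(P_1)=d_1$, $\ell(P_2)=d_2$, $d_1+d_2=d$, $d_1,d_2\geq 1$. Given the offending $Q$ from $v$ to $w$, consider the vertex $u$ and how $Q$ meets the sub-arcs; using \cref{lem:some_useful_facts}(1) and (3) one shows $Q$ can be re-routed (via $u$) into a path $Q_1$ from $v$ to $u$ and $Q_2$ from $u$ to $w$ with $w(Q_1)+w(Q_2)\leq w(Q)$, up to subtracting cycle weights (each $\geq 2$, hence only helping the inequality). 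By the induction hypothesis applied to the strictly shorter sub-arcs $P_1$ (distance $d_1$) and $P_2$ (distance $d_2$) on $C$ — note $P_1,P_2$ are still sub-arcs of the all-positive boundary cycle $C$, and $u$ is still a vertex of $C$ — we get $w(Q_1)\geq 2-d_1$ and $w(Q_2)\geq 2-d_2$, hence $w(Q)\geq w(Q_1)+w(Q_2)\geq 4-d_1-d_2=4-d\geq 2-d$, contradicting $w(Q)\leq 1-d$. (One must be mildly careful that the sub-arc is genuinely the \emph{shortest} arc in $C$ between its endpoints; if $d<\ell(C)/2$ this is automatic, and the case $d\geq\ell(C)/2$ can be reduced to it or handled directly since then $d$ is already large.)

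**Main obstacle.** The delicate point is the re-routing in the inductive step: $Q$ need not pass through $u$ at all, so "splitting $Q$ at $u$" is not literal. One instead runs the \cref{lem:intersecting_lemma}-style argument with the four marked points $v, u, w$ (and using the arc of $C$ to close things up), forming $H=(Q\cup P)\setminus(Q\cap P)$ and locating $u$ in a suitable component; the topological planarity constraint (paths/arcs inside the region bounded by $C$ are forbidden, exactly as in \cref{prop:not_positive_region}) is what forces the component structure to cooperate and guarantees the cycles that get subtracted all lie in $\Gamma(D)$ and hence have weight $\geq 2$. Getting this bookkeeping exactly right — in particular ensuring the extracted sub-paths are honest \emph{paths} (no repeated vertices beyond the allowed two) and that no cycle is double-counted — is the part that will require the most care, but it is entirely parallel to the symmetric-difference arguments already carried out in \cref{lem:intersecting_lemma} and \cref{prop:not_positive_region}.
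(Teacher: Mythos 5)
Your approach (induction on $d$ with a re-routing step) is genuinely different from the paper's, which gives a short direct computation: writing $Q$ for the length-$d$ arc of $C$, it uses the identity $w(P) = w((P\cup Q)\setminus(P\cap Q)) + 2\,w(P\cap Q) - w(Q)$, then observes that $(P\cup Q)\setminus(P\cap Q)$ is a disjoint union of $c\geq 1$ cycles (unless $P=Q$), each of weight $\geq 2$, while $w(P\cap Q)\geq 0$ since $P\cap Q\subseteq C$ is all-positive, giving $w(P)\geq 2c + 0 - d\geq 2-d$ at once.

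However, your inductive step has a genuine error. You claim $Q$ can be re-routed into a path $Q_1$ from $v$ to $u$ and a path $Q_2$ from $u$ to $w$ in $\Gamma(D)$ with $w(Q_1)+w(Q_2)\leq w(Q)$, and then conclude $w(Q)\geq (2-d_1)+(2-d_2)=4-d$. That conclusion is strictly stronger than the lemma's bound $2-d$, and it is false. Concretely: let $C$ be an all-positive $6$-cycle bounding a region, let $v,w$ be vertices of $C$ at distance $d=3$, and let $\Gamma(D)$ be $C$ together with a single negative chord $e$ from $v$ to $w$ lying outside $C$. This satisfies property~(*) (bipartite, $2$-connected, the two $4$-cycles through $e$ have weight $3-1=2$, no degree-$2$ vertices with mixed signs), yet $Q=\{e\}$ has $w(Q)=-1 = 2-d$, while $4-d = 1$. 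So no decomposition into $Q_1, Q_2$ with $w(Q_1)+w(Q_2)\leq w(Q)$ can exist: any path from $v$ to $u$ has weight $\geq 1$ and any path from $u$ to $w$ has weight $\geq 0$, so their sum is $\geq 1 > -1$. The fix is to introduce only one new path and spend the positive sub-arc $P_1\subset C$ (weight $d_1$) for the other half: the symmetric difference $Q\mathbin{\triangle} P_1$ decomposes as a path $Q_2'$ from $u$ to $w$ plus cycles, giving $w(Q_2')\leq w(Q)+d_1$, and applying induction to $Q_2'$ yields $w(Q)\geq (2-d_2)-d_1 = 2-d$. But at that point the argument is doing essentially the same symmetric-difference bookkeeping as the paper's one-shot proof with none of the induction's benefit, so the direct route is cleaner.
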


\begin{proof}
	Let $P$ be a path between $v$ and $w$.  Let us write $Q$ for a path of length $d$ contained in $C$ connecting $v$ to $w$.
	Consider $P \cap Q$.  Each connected component of $P \cap Q$, since it is a subgraph of $Q$, is a path in $C$.  Furthermore %
	the set of edges of $(P \cup Q) \setminus (P \cap Q)$ can be written as a disjoint union of cycles.  Let us write $c$ for the number of cycles in such a decomposition of $(P \cup Q) \setminus (P \cap Q)$.
	
	In the case that $c=0$ then $P=Q$ and $w(Q) = w(P) = d \geq 2-d$.
	
	In the case that $c \geq 1$ then
	\begin{align*}
\pushQED{\qed}
	w(P) &= w((P \cup Q) \setminus (P \cap Q)) + 2w(P \cap Q) - w(Q) \\
	& \geq 2c + 2w(P \cap Q) - d \geq 2c - d \geq 2 - d {\rm .}
\qedhere\popQED
	\end{align*}
\renewcommand{\qedsymbol}{}
\end{proof}

Now we are in a position to give the proof of \cref{prop:positive_region}, thus establishing \cref{prop:weight_4_are_splittable}.

\begin{proof}[Proof of \cref{prop:positive_region}]
	Let us proceed by contradiction.  So assume that $C$ is a boundary cycle with no negative edges, of total weight $w(C)=\ell(C) =2n \geq 4$,
and assume $C$ is not splittable. That is to say that for any pair of vertices $v, w$ on $C$ of distance $d$ along $C$,
there is a path $Q$ in $\Gamma(D)$ with $w(Q) < 4 - d$, thus $2n \leq 2 - d$.

Pick $v_i$ for $i\in \Z/4$ on $C$ in the cyclic ordering, such that the distances between $v_i$ and $v_{i+1}$
are $1, 1, n-1$ and $n-1$ for $i = 0, 1, 2, 3$. Thus there are paths $P_{02}$ and $P_{13}$ from $v_0$ to $v_2$ and from $v_1$ to $v_3$,
respectively, such that $w(P_{02}) \leq 0$ and $w(P_{13}) \leq 2 - n$.
By \cref{lem:paths_of_low_weight_are_verboten} it follows that in fact $w(P_{02}) = 0$ and $w(P_{13}) = 2 - n$.
By \cref{lem:intersecting_lemma}, there are paths $P_{i,i+1}$ and $P_{i+2,i+3}$ for some $i\in\{0,1\}$ such that
$w(P_{i,i+1}) + w(P_{i+2,i+3}) \leq 2 - n$.
This contradicts the fact that by positivity of cycle weights, $w(P_{i,i+1}) > -1$ and $w(P_{i+2,i+3}) > 1 - d$
and thus $w(P_{i,i+1}) \geq 1$ and $w(P_{i+2,i+3}) \geq 3 - d$.
\end{proof}

Finally, we turn to \cref{prop:weight_4_region_exists_really}, to whose proof we devote the remainder of this section.

\begin{proof}[Proof of \cref{prop:weight_4_region_exists_really}]
We divide the proof of this proposition into \cref{lem:weight_4_region_exists,lem:wicked_vertices_exist}.
\end{proof}

Let us first prove the following.
\begin{lemma}
	\label{lem:posvert}
Suppose that $D$ is a diagram which has property~(*).  Suppose further that the boundaries of all regions of $\R^2 \setminus \Gamma(D)$ have total weight $2$.  Then there is a \emph{positive} vertex of $\Gamma(D)$, in other words a vertex not adjacent to a negative edge.
\end{lemma}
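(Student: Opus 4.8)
The plan is to derive a contradiction from an exact count of the negative edges of $\Gamma(D)$, obtained by combining Euler's formula with a double count over regions, and then invoking the elementary fact that a graph on $n$ vertices without isolated vertices has at least $\lceil n/2\rceil$ edges.

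First I would set up notation: let $V$, $E$, $F$ denote the numbers of vertices, edges and regions of the plane graph $\Gamma(D)$ (counting the unbounded region), and let $E_+$, $E_-$ be the numbers of positive and negative edges. Since $D$ has property~(*), the graph $\Gamma(D)$ is $2$--connected with at least three vertices, hence has no bridge; therefore every edge lies on the boundary of exactly two distinct regions, appearing exactly once in each of their boundary cycles (which are genuine cycles by the graph-theoretic fact recalled in the proof of \cref{prop:main_theorem}). Bipartiteness forces each boundary cycle to have even length, say $2k$, and since by hypothesis its total weight equals $2$, it consists of exactly $k+1$ positive and $k-1$ negative edges.

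Next comes the double count. If $2k_f$ is the length of the boundary cycle of a region $f$, then summing over all regions counts every edge twice, so $\sum_f k_f = E$; summing the number $k_f-1$ of negative edges on each boundary cycle likewise counts every negative edge twice, so $2E_- = \sum_f (k_f - 1) = E - F$. By Euler's formula $E - F = V - 2$, so $E_- = \tfrac{V-2}{2}$; in particular $V$ is even. Now assume for contradiction that $\Gamma(D)$ has no positive vertex, i.e.\ every vertex is incident to a negative edge. Then the subgraph spanned by the negative edges has no isolated vertex, so it has at least $\lceil V/2\rceil = V/2$ edges; that is $E_- \geq V/2$, contradicting $E_- = \tfrac{V}{2}-1$. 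Hence $\Gamma(D)$ has a positive vertex.

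I do not expect a serious obstacle: the only point that needs a little care is the multiplicity bookkeeping in the double count, namely that each (negative) edge contributes to exactly two region boundaries and to each with multiplicity one. This is where $2$--connectedness (hence bridgelessness, so no edge bounds a single region twice) and the fact that region boundaries are simple cycles are used; once these are in place the rest is a two-line computation.
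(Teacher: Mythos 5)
Your proof is correct and matches the paper's argument essentially line for line: both double-count negative edges over region boundaries (each region contributes two fewer negatives than half its length), combine with Euler's formula to get $E_- = (V-2)/2$, and conclude $V > 2E_-$ forces a vertex missed by every negative edge. The only cosmetic difference is that you state the final step via the bound $E_- \geq V/2$ for a graph with no isolated vertex, whereas the paper leaves the inference $v > 2e_- \Rightarrow$ positive vertex implicit.
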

\begin{proof}
As before, let $f_i$ be the number of boundary cycles of length $2i$.
Let $f$ be the total number of regions, $e$ the number of edges, $e_-$ the number of negative edges and $v$ the number of vertices. Then we have
\[
f  = \sum_{i=2}^{\infty} f_{2i} \quad {\rm and} \quad e  = \sum_{i=2}^{\infty} if_{2i} \quad \text{so that}\quad
v = 2 + \sum_{i=2}^{\infty} (i-1)f_{2i} {\rm .}
\]
Then, since every region of $\R^2 \setminus \Gamma(D)$ has two more positive edges in its boundary than negative edges, we have
\[
\pushQED{\qed}
e_- = \sum_{i=2}^{\infty} (i - 1) f_{2i} / 2 \quad \text{and so}\quad v > 2e_- {\rm .}
\qedhere\popQED
\]
\renewcommand{\qedsymbol}{}
\end{proof}

\begin{definition}
	\label{def:wicked}
	Let $D$ be a diagram which has property (*) and let $v$ be a positive vertex of $\Gamma(D)$.
	Let $C_1, \ldots, C_{n}$ be the boundaries of those regions of $\R^2 \setminus \Gamma(D)$ adjacent to $v$.  %
	We say that $v$ is a \emph{wicked} positive vertex if $C_i \cap C_j$ is connected for all $i,j$.
\end{definition}

\begin{lemma}
	\label{lem:weight_4_region_exists}
	Suppose that $D$ is a diagram which has property (*) and which contains a wicked positive vertex.  Then $\R^2 \setminus \Gamma(D)$ contains a region whose boundary cycle has weight at least $4$.
\end{lemma}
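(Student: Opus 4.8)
The plan is to consider the union of the closed regions of $\R^2\setminus\Gamma(D)$ that surround the given wicked positive vertex, to use wickedness to show that this union is an embedded disk, and then to compare the total weight of the boundary cycles of those regions with the weight of the disk's outer boundary cycle. So fix a wicked positive vertex $v$, and let $R_1,\dots,R_n$ be the regions incident to $v$, listed in cyclic order around $v$; as $\Gamma(D)$ is $2$--connected we have $n=\deg(v)\ge 2$, and the $R_i$ are pairwise distinct because the boundary of a region of a $2$--connected plane graph is a simple cycle, meeting $v$ only once. Let $C_i=\partial R_i$, and label the edges at $v$ as $e_1,\dots,e_n$ so that $R_i$ lies between $e_i$ and $e_{i+1}$ (indices mod~$n$). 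Since $v$ is positive each $e_i$ has weight $+1$, and $C_i$ contains precisely the two $v$--edges $e_i,e_{i+1}$; hence $w(C_i)=2+w(C_i')$ with $C_i':=C_i\setminus\{e_i,e_{i+1}\}$, which is a path between the non-$v$ endpoints $u_i$ of $e_i$ and $u_{i+1}$ of $e_{i+1}$. It will suffice to show
\[
\textstyle\sum_{i=1}^n w(C_i)\ \ge\ 2n+2,
\]
for then, since the $w(C_i)$ are positive even integers (the $C_i$ are cycles of a bipartite graph, and all cycles of $\Gamma(D)$ have positive weight), they cannot all equal $2$, whence some $w(C_i)\ge 4$.

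The case in which $v$ is incident to \emph{every} region---equivalently, $\Gamma(D)-v$ is a tree---is handled separately and directly: every cycle of $\Gamma(D)$ then consists of two spokes at $v$ together with a path in that tree; each leaf of the tree carries a positive spoke and hence, by property~(*), a positive tree-edge; and propagating along degree-$2$ vertices one exhibits a region bounded by a cycle of weight at least $4$. So assume henceforth that $v$ is not incident to every region. Now for non-consecutive $i,j$ the set $C_i\cap C_j$ is connected (wickedness) and contains $v$; as a connected subgraph of the simple cycle $C_i$ it is a sub-path, and if it contained any edge then, by connectedness, it would contain an edge at $v$---a spoke lying on both $C_i$ and $C_j$, which is impossible since the spokes on $C_i$ are $e_i,e_{i+1}$ and $i,j$ are not consecutive. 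Hence $C_i\cap C_j=\{v\}$. For consecutive $i,i+1$, wickedness makes $C_i\cap C_{i+1}$ a single path $\gamma_{i+1}$; it begins with the spoke $e_{i+1}$ and then, past $u_{i+1}$, follows a (possibly trivial) path $P_{i+1}$ every vertex of which, except its far endpoint, has degree $2$ in $\Gamma(D)$---so by property~(*) all edges of $P_{i+1}$ are positive and $w(P_{i+1})=\ell(P_{i+1})\ge 0$.

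It follows that the closed disks $\overline{R_1},\dots,\overline{R_n}$ are attached in cyclic order along the arcs $\gamma_1,\dots,\gamma_n$, which all emanate from $v$ and are otherwise pairwise disjoint; attaching them one at a time exhibits $U:=\bigcup_{i=1}^n\overline{R_i}$ (in $S^2=\R^2\cup\{\infty\}$) as an embedded closed disk with $v$ in its interior. Its boundary $C^{\mathrm{out}}:=\partial U$ is a simple closed curve composed of edges of $\Gamma(D)$, hence a cycle of $\Gamma(D)$, and it is non-empty (else $U=S^2$, which we excluded); therefore $w(C^{\mathrm{out}})$ is a positive even integer, so $w(C^{\mathrm{out}})\ge 2$.

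Finally, each path $C_i'$ is the edge-disjoint concatenation of $P_i$, an arc $A_i$ of $C^{\mathrm{out}}$ (the portion of $\partial U$ facing $R_i$), and $P_{i+1}$; the arcs $A_i$ are edge-disjoint with $\bigcup_i A_i = C^{\mathrm{out}}$, and each $P_j$ lies in exactly the two paths $C_{j-1}'$ and $C_j'$. Summing,
\[
\textstyle\sum_{i=1}^n w(C_i)\ =\ 2n+\sum_{i=1}^n w(C_i')\ =\ 2n+w(C^{\mathrm{out}})+2\sum_{j=1}^n w(P_j)\ \ge\ 2n+2,
\]
which is the required inequality. I expect the main obstacle to be the third paragraph: verifying in full generality---allowing multiple edges at $v$, bigon regions, and empty arcs $A_i$---that wickedness forces this ``fan of polygons'' picture and that $U$ really is an embedded disk. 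The degenerate tree case and the final parity/pigeonhole step are routine by comparison.
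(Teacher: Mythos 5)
Your argument is essentially the paper's, repackaged: cyclically order the regions $R_1,\dots,R_n$ around the wicked positive vertex $v$, use wickedness to pin down the pairwise intersections of the boundary cycles $C_i$, form the outer boundary cycle, and do an inclusion--exclusion weight count. The paper runs this as a proof by contradiction (assume all $w(C_i)=2$ and derive $w(C)\le 0$ for the outer cycle $C$, contradicting property~(*)), while you compute $\sum_i w(C_i)=2n+w(C^{\mathrm{out}})+2\sum_j w(P_j)\ge 2n+2$ directly and finish by parity, but the two calculations are the same identity read in opposite directions. Your explicit verification that $C_i\cap C_j=\{v\}$ for non-consecutive $i,j$, and the disk picture for $U$, are making precise what the paper asserts in the line ``the edges of $(C_1\cup\cdots\cup C_n)\setminus\bigcup_i(C_i\cap C_{i+1})$ form a cycle $C$.''

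You are right to flag the degenerate situation where $v$ is incident to every region of $\R^2\setminus\Gamma(D)$, so the outer boundary is empty and the paper's sentence quoted above literally fails; the paper does not address this. However your separate treatment of that case is not yet a proof. You assert that ``each leaf of the tree carries a positive spoke and hence, by property~(*), a positive tree-edge,'' but property~(*) only constrains degree-$2$ vertices; a leaf $\ell$ of $T=\Gamma(D)-v$ can have several parallel spokes to $v$ and therefore degree $\ge 3$ in $\Gamma(D)$, in which case nothing forces its tree-edge to be positive. And ``propagating along degree-$2$ vertices one exhibits a region bounded by a cycle of weight at least $4$'' is stated without an argument; it is not clear how the propagated positive arc is to be closed up into a boundary cycle, nor what happens when the propagation meets a vertex of degree $\ge 3$. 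So the tree case, which you correctly single out, still needs a complete argument (for instance, a direct count showing $w(T)>0$, or a separate inductive reduction) before your proof closes the gap you identified.
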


\begin{proof}
	For a contradiction, suppose that $D$ is a diagram which has property (*) and in which there is no region of $\R^2 \setminus \Gamma(D)$ whose boundary has weight $4$ or greater, and suppose that $v$ is a wicked positive vertex of $\Gamma(D)$.
	
	Let $C_1, \ldots, C_{n}$ be the boundaries of those regions of $\R^2 \setminus \Gamma(D)$ adjacent to $v$, in counterclockwise order around $v$, where the subscripts are considered modulo~$n$.
If all of the $C_i$ had length 2, then $v$ would have $n$ edges adjacent to a vertex $w$, and no further edges, which would contradict property~(*).
	Since ${C_i} \cap {C_{i+1}}$ is connected, it is a path starting at $v$ (or containing $v$ in case $n=2$).  Since $D$ has property~(*), all these paths contain
	only positive edges, and thus have positive total weight.
	Now, using that not all $C_i$ have length 2, the edges of
	\[ (C_1 \cup \cdots \cup C_n) \setminus \bigcup_i (C_i \cap C_{i+1}) \]
	form a cycle $C$ with
	\begin{align*}
	w(C) & = \sum_i w(C_i) - 2\sum_i w(C_i \cap C_{i+1}) \\
	& = 2n - 2\sum_i w(C_i \cap C_{i+1}) \\
	& \leq 2n - 2n = 0 {\rm .}
	\end{align*}
	But this contradicts the property~(*).
\end{proof}

\begin{lemma}
	\label{lem:wicked_vertices_exist}
	If $D$ is diagram which has property (*) then either $\Gamma(D)$ has a wicked positive vertex, or $\R^2 \setminus \Gamma(D)$ contains a region whose boundary cycle has weight at least $4$.
\end{lemma}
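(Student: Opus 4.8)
The strategy is to argue by contradiction: assume that $\Gamma(D)$ has no wicked positive vertex and that every region of $\R^2\setminus\Gamma(D)$ has boundary cycle of weight exactly $2$ (recall boundary cycles always have weight $\geq 2$ by property~(*), so ``not $\geq 4$'' means ``$=2$''). Under the weight-$2$ assumption, \cref{lem:posvert} supplies at least one positive vertex, so the set $S$ of positive vertices is non-empty; by hypothesis every vertex of $S$ fails to be wicked. The goal is to derive a contradiction with property~(*), most plausibly with $2$-connectedness or planarity.

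The key step is to exploit the failure of wickedness at a positive vertex $v$: there are regions $R_i, R_j$ incident to $v$ whose boundary cycles $C_i, C_j$ meet in a \emph{disconnected} set. Since both $C_i$ and $C_j$ pass through $v$, their intersection contains $v$ plus at least one further component; the part of $C_i\cap C_j$ near $v$ is a path of positive edges emanating from $v$ (as in \cref{lem:weight_4_region_exists}), and the existence of a second component of $C_i\cap C_j$ forces a vertex $v'\neq v$ lying on both $C_i$ and $C_j$ but ``separated'' from $v$ along these cycles. I would analyze the planar picture around $v$: the regions incident to $v$ occur in a cyclic order $R_1,\dots,R_n$, consecutive ones sharing an edge or path at $v$; a disconnected intersection $C_i\cap C_j$ means the union $C_i\cup C_j$ together with the shared arcs encloses a sub-configuration, and one can extract from this either a cut vertex of $\Gamma(D)$ (contradicting $2$-connectedness) or a short positive cycle that, combined with weight-$2$ boundary cycles, violates the cycle-positivity condition. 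Concretely, I expect to run an extremal argument: pick $v\in S$ together with a pair $(C_i,C_j)$ of incident boundary cycles whose intersection is disconnected and such that the region ``between'' the two components is minimal; minimality should force the enclosed configuration to be a single vertex or edge, yielding the contradiction directly. Alternatively one may count: summing over the positive vertices and using that each fails wickedness should contradict the Euler-characteristic bookkeeping (the relation $v>2e_-$ from \cref{lem:posvert} together with the weight-$2$ constraint on each region).

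I would also need to handle the degenerate case $n=2$ (only two regions incident to $v$) carefully, where $C_1\cap C_2$ being disconnected means the two edges/paths at $v$ bound a region on each side but $C_1,C_2$ re-meet elsewhere; here a direct planarity argument shows $v$ is in fact a cut vertex unless $\Gamma(D)$ is very small, in which case it is checked by hand against property~(*) (e.g.\ the two-Seifert-circle root cases, already excluded from the induction or trivially satisfying the conclusion).

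The main obstacle I anticipate is making the ``disconnected intersection produces a contradiction'' step rigorous without hand-waving about the plane: one must carefully track which edges at $v$ belong to which $C_i$, use the Jordan curve theorem to locate the second component of $C_i\cap C_j$ on a definite side, and then either produce an honest cut vertex or isolate a genuine cycle of $\Gamma(D)$ of weight $\leq 0$. Getting the extremal choice right — so that minimality actually collapses the enclosed region — is the delicate part; everything else (the weight bookkeeping, the $n=2$ base case, invoking \cref{lem:posvert}) should be routine.
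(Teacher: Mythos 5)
There is a genuine gap in the proposed plan. The central mechanism you are pinning your hopes on — that a disconnected intersection $C_i\cap C_j$ at a non-wicked positive vertex directly yields either a cut vertex or a cycle of non-positive weight — is not what happens, and in fact would be false: 2-connected bipartite plane graphs with all cycles of positive weight and all boundary cycles of weight exactly $2$ can perfectly well have non-wicked positive vertices. The contradiction the paper extracts is substantially more indirect.

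Concretely, the paper's argument proceeds in several steps that your plan does not anticipate. First, the extremal choice is not ``minimal enclosed region between components of $C_i\cap C_j$'' but a \emph{double} minimality: minimize the number of positive vertices of $\Gamma(D)$, then the number of crossings. Second, after writing $\R^2\setminus(R_i\cup R_j)$ as a disjoint union of pieces $\overline{B_k}$ with boundary cycles $Z_k$, one applies the induction \emph{to the subdiagrams sitting inside each $\overline{B_k}$} to force $w(Z_k)\geq 4$; your plan says nothing about such a sub-induction. Third, the quantitative step is the weight identity
\[
2\bigl(w(P_1)+\cdots+w(P_n)\bigr) = w(C_i)+w(C_j)-\bigl(w(Z_1)+\cdots+w(Z_n)\bigr)\leq 4-4n\leq -2n
\]
for the paths $P_k$ forming $C_i\cap C_j$; combined with the observation that no $P_k$ mixes positive and negative edges (a consequence of property (*) via the degree-$2$ condition), this forces some $P_k$ to contain two consecutive negative edges. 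Finally — and this is the piece your plan is missing entirely — the paper applies a \emph{flype} along a circle through $v$ and the midpoint $w$ of those two negative edges, moving a negative edge next to $v$. After a Reidemeister~II cleanup this either destroys the positive vertex $v$ outright, strictly decreasing the count of positive vertices and contradicting minimality, or it leaves the number of positive vertices fixed while decreasing crossings, again contradicting the extremal choice. In both cases the weight-of-boundary-cycle data is preserved under the region correspondence, so the hypothesis that there is no weight-$\geq 4$ region transfers to the new, smaller diagram. Without the flype and the double-minimality extremal scheme, I do not see how your ``cut vertex or short cycle'' route, nor your vaguer Euler-characteristic counting alternative, can be pushed through; the inequality $v>2e_-$ from \cref{lem:posvert} alone does not contradict non-wickedness under the weight-$2$ hypothesis.
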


\begin{proof}[Proof of \cref{lem:wicked_vertices_exist}]
	Suppose for a contradiction that there exists at least one diagram with property (*) whose Seifert graph contains no positive wicked vertices and no boundaries of weight at least $4$.  Consider such diagrams with the minimal number of positive vertices, and let $D$ be one of these with the minimal number of crossings.
	
	We know by \cref{lem:posvert} that $D$ has a positive vertex; let us call it $v$.
	Let $C_1, \ldots, C_{n}$ be the boundaries of those regions of $\R^2 \setminus \Gamma(D)$ adjacent to $v$, in counterclockwise order around $v$, where the subscripts are considered modulo $n$.
	Then, since $v$ is not wicked by assumption, for some $i \not= j$ we have that $C_i \cap C_j$ has $m \geq 2$ components (note that one of these components contains the positive vertex~$v$).  We write $R_i$ and $R_j$ for the closed bounded regions of $\R^2$ whose boundaries are $C_i$ and $C_j$ respectively.  Then $\R^2 \setminus (R_i \cup R_j)$ has $m$ components $B_1, \ldots, B_n$ and the boundary of each $\overline{B_k}$ is a cycle in $\Gamma(D)$.  Let us write $Z_k$ for the boundary of $\overline{B_k}$, and $P_1, P_2, \ldots, P_n$ for the paths which are the components of $R_i \cap R_j$.

	\begin{figure}[hbt]
		\labellist
		\tiny
		\pinlabel {$R_i$} at 640 1250
		\pinlabel {$R_j$} at 640 200
		\pinlabel {$B_1$} at -90 740
		\pinlabel {$B_2$} at 240 740
		\pinlabel {$B_3$} at 515 740
		\pinlabel {$B_n$} at 1020 740
		\pinlabel {$P_1$} at 100 785
		\pinlabel {$P_2$} at 370 785
		\pinlabel {$P_3$} at 655 785
		\pinlabel {$P_n$} at 1155 785
		\endlabellist
		\includegraphics[scale=0.16]{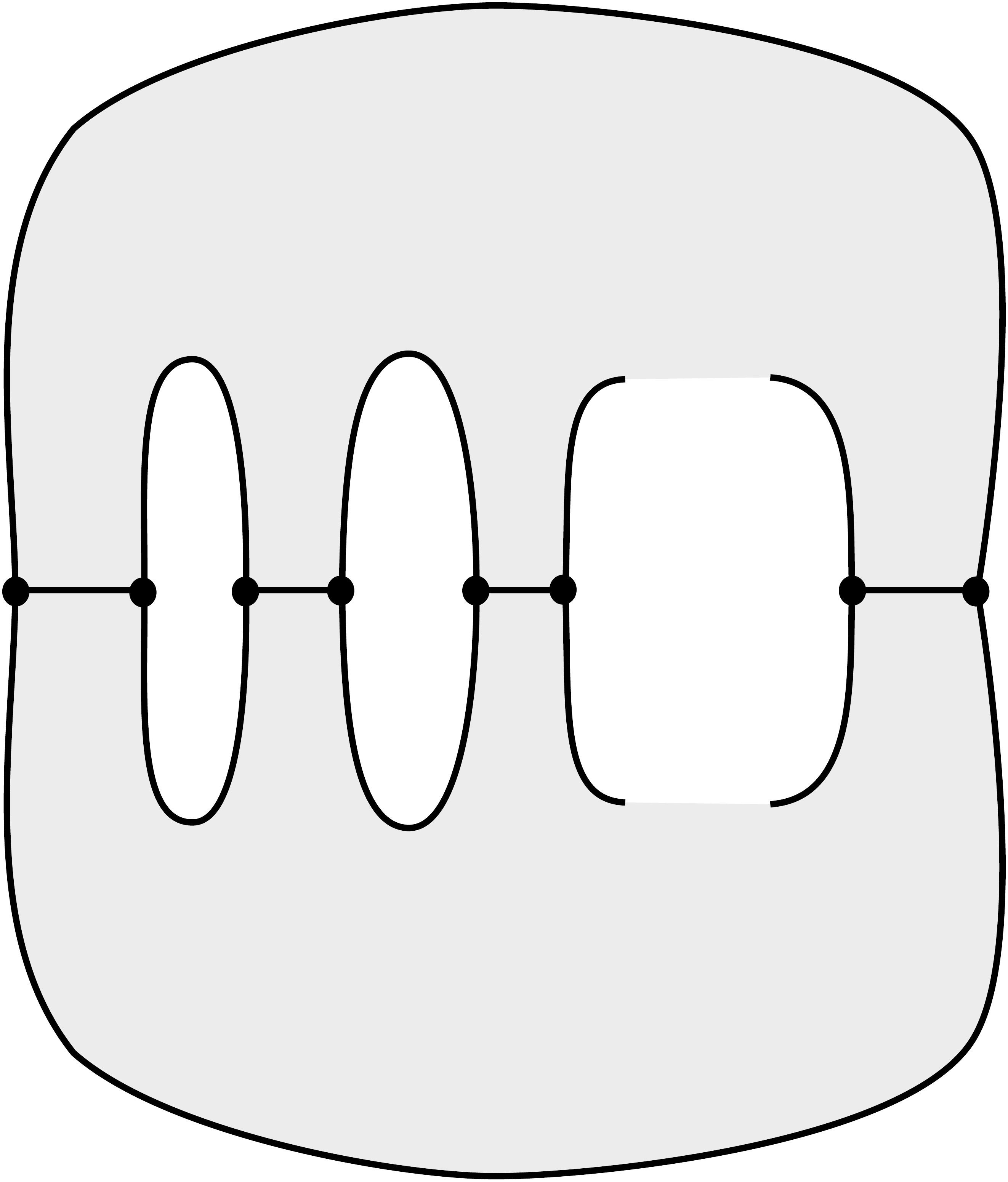}
		\caption{A diagram of the situation of \cref{lem:wicked_vertices_exist}.}
		\label{fig:CiCj}
	\end{figure}
	
	The situation is illustrated in \cref{fig:CiCj}.  Note that the interiors of the regions $R_i$ and $R_j$ do not contain any vertices or edges of $\Gamma(D)$.  Note also that some paths $P_k$ could consist of single vertices.  Note further that the vertices of each $P_k$ which are not endpoints of $P_k$ have degree $2$ in $\Gamma(D)$.  Since $\Gamma(D)$ has property (*) it follows that no $P_k$ has both positive and negative edges.
	
	Now for any $k$ consider the subgraph of $\Gamma(D)$ that lies within $\overline{B_k}$.  This is the Seifert graph of a diagram $D'$ that also satisfies property~(*), apart from possibly containing a degree $2$ vertex adjacent to both a positive and a negative edge.  By Reidemeister II moves $D'$ may be converted to a diagram $D''$ with property (*) such that $D''$ has the same number of Seifert circles as $D'$ and possibly fewer crossings.  Hence by induction $\R^2 \setminus \Gamma(D'')$ contains a region whose boundary has weight $4$ or more.  However the regions of $\R^2 \setminus \Gamma(D'')$ are in an obvious correspondence with those of $\R^2 \setminus \Gamma(D')$ under which the weights the boundary cycles are invariant.  Hence $\R^2 \setminus \Gamma(D')$ contains a region whose boundary has weight $4$ or more.  We conclude that either $\R^2 \setminus \Gamma(D)$ does as well (and we are done) or that $w(Z_k) \geq 4$.
	
	We assume for a contradiction that we have $w(C_k) = 2$ for all $C_k$.  Then we have
	\begin{align*}
	2(w(P_1) + \cdots + w(P_n)) &= 2w(C_i \cap C_j) \\
	 &= w(C_i) + w(C_j) - (w(Z_1) + \cdots + w(Z_n)) \\
	&= 4 - (w(Z_1) + \cdots + w(Z_n)) \\
	&\leq 4 - 4n \leq -2n {\rm ,}
	\end{align*}
	where the last inequality is because $n \geq 2$.
	
	Since no $P_k$ has both positive and negative edges, and at least one of the $P_k$, say $P_\alpha$, (that containing the vertex $v$) contains no positive edge, it follows that at least one of the $P_i$ contains two consecutive negative edges.  Let $w$ be the midpoint of these two consecutive edges.  Note that there is an embedded circle in $\R^2$ which meets $\Gamma(D)$ only at $v$ and $w$.  Flyping along this circle to move one of the negative edges to lie adjacent to $v$ creates a new graph which is the Seifert graph $\Gamma(D')$ of a diagram $D'$.
	
	Now, in the case that the path $P_\alpha$ contains at least one edge, $D'$ admits simplification via a Reidemeister II move to a new diagram $D''$ such that $D''$ has property~(*). %
	Furthermore $\Gamma(D'')$ has either the same number or one fewer positive vertices than~$\Gamma(D)$.  Hence by assumption, $\Gamma(D'')$ must contain a wicked positive vertex and so by \cref{lem:weight_4_region_exists} contains a region whose boundary cycle has weight $4$.
	
	But there is an obvious correspondence between the regions of $\R^2 \setminus \Gamma(D'')$ and those of $\R^2 \setminus \Gamma(D)$, and the weight of the boundary cycle of each region is preserved under this correspondence.  Hence we get a contradiction in this case.
	
	Finally consider the remaining case that the path $P_\alpha$ contains no edges.  In this case we have that $\Gamma(D')$ has property (*), has no wicked positive vertices, and has one fewer positive vertices than does $\Gamma(D)$.  Hence by assumption $\R^2 \setminus \Gamma(D')$ must contain a region whose boundary cycle has weight at least $4$.  But again, there is a weight-preserving correspondence between the regions of $\R^2 \setminus \Gamma(D')$ and those of $\R^2 \setminus \Gamma(D)$.  Hence we have a contradiction.
\end{proof}

\section{Applications}
\label{sec:corollaries}
This section contains the proofs of the applications of \cref{thm:2} mentioned in the introduction.
\corC*
\begin{proof}
For the `only if' direction (for which the assumption that $\Sigma$ is isotopic to a canonical surface is not necessary),
assume $\Sigma$ is quasipositive
and let $\gamma\subset\Sigma$ be an unknot that does not bound a disk in $\Sigma$,
with framing $k$ induced by~$\Sigma$.
Let $B$ be an annular neighborhood of $\gamma$ in~$\Sigma$. As an incompressible subsurface of $\Sigma$,
$B$ is itself quasipositive. But as an unknotted band, $B$ is quasipositive only if its core
curve $\gamma$ has negative induced framing by $B$ (which equals the framing induced by $\Sigma$).
It follows that $k < 0$, 
concluding the proof of the `only if' direction.

For the `if' direction, let $\Sigma$ be a canonical Seifert surface. By \Cref{thm:2}, it suffices to check that all cycles in the Seifert graph of $\Sigma$ have strictly positive total weight. For a cycle $c$ in the Seifert graph, we let $A_c$ be the embedded annulus in $\Sigma$ given by the union of Seifert circles and half-bands that correspond to the vertices and edges that make up $c$.
Note that $A_c$ is an unknotted annulus and incompressible in $\Sigma$.
So, by assumption, $A_c$ has strictly negative framing. Twice the framing of $A_c$ equals minus the total weight of $c$. To see this, note that the framing is calculated as the linking of the two boundary components of $A_c$ where the orientation is reversed on one component, hence every positive crossing traversed by $A_c$ contributes $-\frac{1}{2}$ to the framing, while it contributes $+1$ to the total weight of $c$ (and analogously for negative crossings).
With this we established that all cycles have strictly positive total weight, as required.
\end{proof}
Recall that by $y$ we denote a slice-torus invariant, as defined in the introduction.
\thmD*
\begin{proof}
Let $g$ denote the genus of $\Sigma$.
Assume toward a contradiction that $\Sigma$ is not strongly quasipositive.
Then, by \cref{cor:thm2viagraph}, $\Sigma$ contains a homologically non-trivial unknot $U$ whose framing induced by $\Sigma$ is some non-negative integer $k$.
Choose a closed disk $D\subset S^3$ such that $D$ intersects $\Sigma$ transversely in a proper arc $I\subset \Sigma$ that lies in the interior of $D$ and, in $\Sigma$, $I$ intersects $U$ transversely in exactly one point.
Let $\Sigma'\subset S^3$ be the surface obtained from $\Sigma$ by a $\pm1/k$ surgery along $\partial D$, where the sign is chosen such that $U\subset \Sigma'$ has framing $0$ induced by~$\Sigma'$.
Note that one gets from the knot $K = \partial \Sigma$ to the knot $J \coloneqq \partial \Sigma'$ by $k$ crossing changes from negative to positive,
and so $y(K) \leq y(J)$.
Hence 
ambient surgery in $B^4$ of $\Sigma'$ along $U$ (i.e.\ replacing an annular neighborhood of $U$ in $\Sigma'$ by two discs properly embedded in~$B^4$) produces a slice surface $F$ of~$J$ with $\text{genus}(F) = \text{genus}(\Sigma') - 1 = g - 1$.
It follows that $y(K) \leq y(J) \leq g - 1$, contradicting the assumption $y(K) = g$.
\end{proof}
\corE*
\begin{proof}
(1) $\Rightarrow$ (2) holds by definition; (2) $\Rightarrow$ (3) is implied by $\mathrm{writhe}(\beta) - n +1= 2g(K)$
for a strongly quasipositive braid $\beta$ on $n$ strands whose closure is $K$; (3) $\Rightarrow$ (4) follows since 
$\frac{\mathrm{sl}(K)+1}{2} \leq y (K)\leq g(K)$ holds for all knots $K$; and (4) $\Rightarrow$ (1) 
is immediate from \Cref{prop:s=cang=>sqp}.
\end{proof}

\bibliographystyle{myamsalpha}
\bibliography{References}

\end{document}